\definecolor{grey}{rgb}{.7,.7,.7}
\newcommand{\R}{\mathbb R}
\newcommand{\N}{\mathbb N}
\newcommand{\cB}{\mathcal{B}}
\newcommand{\cC}{\mathcal{C}}
\newcommand{\cF}{\mathcal{F}}
\newcommand{\cG}{\mathcal{G}}
\newcommand{\cH}{\mathcal{H}}
\newcommand{\cL}{\mathcal{L}}
\newcommand{\cM}{\mathcal{M}}
\newcommand{\cP}{\mathcal{P}}
\newcommand{\e}{\varepsilon}
\newcommand{\g}{\gamma}
\renewcommand{\a}{\alpha}
\newcommand{\hmu}{{\hat \mu}}
\def\card{{\rm card}}
\newtheorem{theo}{Theorem}[section]
\newtheorem{prop}[theo]{Proposition}
\newtheorem{lemma}[theo]{Lemma}
\newtheorem{dfn}{Definition}[section]
\newtheorem{corollary}[theo]{Corollary}
\newtheorem{hypothesis}{Hypothesis}[section]
\theoremstyle{remark}
\newtheorem{question}{Question}
\newtheorem{remk}{Remark}[section]
\newtheorem{xmpl}[theo]{Example}
\DeclareMathOperator*{\dist}{dist}
\DeclareMathOperator*{\diam}{diam}
\newcommand{\subjclass}[2][2010]{%
  \let\@oldtitle\@title%
  \gdef\@title{\@oldtitle\footnotetext{#1 \emph{Mathematics Subject Classification:} #2}}%
}
\newcommand{\keywords}[1]{%
  \let\@@oldtitle\@title%
  \gdef\@title{\@@oldtitle\footnotetext{\emph{Key words and phrases.} #1.}}%
}
\title{Recovering measures from approximate values on balls}
\author{Blanche \textsc{Buet}, Gian Paolo \textsc{Leonardi}}
\subjclass{28A78, 28C15, 49Q15}
\begin{document}
     \maketitle

\renewcommand{\phi}{\varphi}
\renewcommand{\epsilon}{\varepsilon}
\newcommand{\one}{\mathds{1}}

\begin{abstract}
In a metric space $(X,d)$ we reconstruct an approximation of a Borel measure $\mu$ starting from a premeasure $q$ defined on the collection of closed balls, and such that $q$ approximates the values of $\mu$ on these balls. More precisely, under a geometric assumption on the distance ensuring a Besicovitch covering property, and provided that there exists a Borel measure on $X$ satisfying an asymptotic doubling-type condition, we show that a suitable packing construction produces a measure $\hmu^{q}$ which is equivalent to $\mu$. Moreover we show the stability of this process with respect to the accuracy of the initial approximation. We also investigate the case of signed measures.
\end{abstract}

	\tableofcontents

\section{Introduction}

Is a Borel measure $\mu$ on a metric space $(X,d)$ fully determined by its values on balls? In the context of general measure theory, such a question appears to be of extremely basic nature. The answer (when it is known)  strongly depends upon the interplay between the measure and the metric space. A clear overview on the subject is given in \cite{handbook_geometry_banach_spaces}. Let us mention some known facts about this issue. When $X=\R^{n}$ equipped with a norm, the answer to the above question is in the affirmative. The reason is the following: if two Radon measures $\mu$ and $\nu$ coincide on every ball $B_{r}(x)\subset \R^{n}$, then in particular they are mutually absolutely continuous, therefore by the Radon-Nikodym-Lebesgue Differentiation Theorem one has $\mu(A) = \int_{A}\eta\, d\nu = \nu(A)$ for any Borel set $A$, where
\[
\eta(x) = \lim_{r \to 0} \frac{\mu (B_r (x))}{\nu (B_r(x))} = 1
\]
is the Radon-Nikodym derivative of $\mu$ with respect to $\nu$ (defined for $\nu$-almost all $x\in \R^{n}$).  
More generally, the same fact can be shown for any pair of Borel measures on a finite-dimensional Banach space $X$. 
Unfortunately, the Differentiation Theorem is valid on a Banach space $X$ if and only if $X$ is finite-dimensional. Of course, this does not prevent in general the possibility that Borel measures are uniquely determined by their values on balls. Indeed, Preiss and Ti$\rm{\check{s}er}$ proved in \cite{preiss_tiser} that in separable Banach spaces, two finite Borel measures coinciding on all balls also coincide on all Borel sets. Nevertheless, if this coincidence turns out to be satisfied only on balls of radius, say, less than $1$, then the question still stands. In the case of separable metric spaces, Federer introduced in \cite{federer} a geometrical condition on the distance (see Definition \ref{def:federer}) implying a Besicovitch-type covering lemma that can be used to show the property above, i.e., that any finite Borel measure is uniquely identified by its values on closed balls. When this condition on the distance is dropped, some examples of metric spaces and of pairs of distinct Borel measures coinciding on balls of upper-bounded diameter are known (see \cite{davies}). 

Here we consider the case of a separable metric space $(X,d)$ where Besicovitch covering lemma (or at least some generalized version of it) holds,  and we ask the following questions:
\begin{question}\label{question_1}\it
How can we reconstruct a Borel measure from its values on balls, and especially, what about the case of signed measures? 
\end{question}
A classical approach to construct a measure from a given \emph{premeasure} $p$ defined on a family $\cC$ of subsets of $X$ (here the premeasure $p$ is defined on closed balls) is to apply Carath\'eodory constructions (Method I and Method II, see \cite{bruckner_thomson}) to obtain an outer measure. 
We recall that a premeasure $p$ is a nonnegative function, defined on a given family $\cC$ of subsets of $X$, such that $\emptyset \in \cC$ and $p(\emptyset)=0$. By Method I, an outer measure $\mu^\ast$ is defined starting from $p$ as
\[
\mu^\ast (A) = \inf \left\lbrace \sum_{k=1}^\infty p(B_k) \: : \:  B_k \in \cC \text{ and } A \subset \bigcup_{k=1}^\infty B_k  \right\rbrace \: ,
\] 
for any $A \subset X$. But, as it is explained in \cite{bruckner_thomson} (Section $3.2$), Method I does not take into account that $X$ is a metric space, thus the resulting outer measure can be incompatible with the metric on $X$, in the sense that open sets are not necessarily $\mu^\ast$-measurable. On the other hand, Method II is used to define Hausdorff measures (see Theorem~\ref{thm_caratheodory_construction}) and it always produces a metric outer measure $\mu^{\ast}$, for which Borel sets are $\mu^\ast$-measurable.

As for a signed measure $\mu = \mu^+ - \mu^-$, the main problem is that, given a closed ball $B$, it is impossible to directly reconstruct $\mu^+ (B)$ and $\mu^- (B)$ from $\mu(B)$. The idea is, then, to apply Carath\'eodory's construction to the premeasure $p^+ (B) = \left( \mu(B) \right)_+$ (here $a_{+}$ denotes the positive part of $a\in \R$) and check that the resulting outer measure is actually $\mu^+$. Then, by a similar argument we recover $\mu^{-}$. 

Now we consider the problem of reconstructing a measure $\mu$ from approximate values on balls. We thus assume that a premeasure $q$, defined on closed balls, is given and satisfies the following two properties: for some $0<\a\le 1$, $C\ge 1$, and $r_{0}>0$,  
\begin{equation} \label{eq_mean_premeasure}
\begin{split}
(i)&\ \ q(B_{r}(x)) \ge C^{-1}\mu(B_{\a r}(x)) \,,\\ 
(ii)&\ \ q(B_{r}(x)) \le C \mu(B_{r}(x))
\end{split}
\end{equation} 
holds for all $0<r<r_{0}$ and all $x\in X$. 

\begin{question} \label{question_2} \it
Given a positive Borel measure $\mu$ and a premeasure $q$ defined on balls and satisfying the two conditions in \eqref{eq_mean_premeasure}, is it possible to reconstruct an approximation up to constants of $\mu$ from $q$? What about the case when $\mu$ is a signed measure?
\end{question}

First notice that under the assumptions \eqref{eq_mean_premeasure} the best one can obtain is a reconstruction $\hmu$ of $\mu$ such that 
\[
\hat C^{-1}\mu \le \hmu \le \hat C\, \mu
\]
for some constant $\hat C\ge 1$. We stress that, in the case $\a = 1$ in \eqref{eq_mean_premeasure}(i), this can be easily obtained via Carath\'eodory Method II (with $\hat C = C$) while in the case $0<\a<1$ Carath\'eodory construction does not provide in general such a measure $\hmu$. 
%
%
Loosely speaking, a loss of mass can happen in the recovery process, as the example presented in section \ref{section:2.1} shows.

In order to recover $\mu$, or at least some measure equivalent or comparable to $\mu$, the choice of the centers of the balls in the collection, which are used to cover the support of $\mu$, is crucial. Indeed they must be placed in some nearly-optimal positions, such that even the concentric balls with smaller radius have a significant overlapping with the support of $\mu$. This led us to consider a packing-type construction. The packing construction is mainly used to define the packing $s$-dimensional measure and its associated notion of packing dimension. It is in some sense dual to the construction leading to Hausdorff measure and dimension, and was introduced by C. Tricot in \cite{tricot_0}. Then Tricot and Taylor in \cite{tricot} extended it to the case of a general premeasure. In our setting we show that this packing construction can be formulated in a simpler and more manageable way (see section \ref{sect:packtype}).

Since the lower bound on $q(B_{r}(x))$ is given in terms of $\mu(B_{\a r}(x))$, in the case $0<\a<1$ and under some additional assumptions on the metric space $(X,d)$ we prove a suitable version of Besicovitch covering lemma (see Proposition \ref{prop_diffusion_in_X} and Corollary \ref{bescivoic_with_doubling_balls}), which represents a key ingredient in our construction and seems also of independent interest.

Some further explanations about the assumption \eqref{eq_mean_premeasure} on $q(B_{r}(x))$ are in order. An example of $q(B_{r}(x))$ satisfying \eqref{eq_mean_premeasure} is
\begin{equation}\label{varif-prem}
q(B_{r}(x)) = \frac 1r \int_{0}^{r} \mu(B_{s}(x))\, ds\,;
\end{equation}
more generally one could consider 
\[
q(B_{r}(x)) = \frac 1r \int_{0}^{r} \mu(B_{s}(x))\,\omega(s/r) ds
\]
where $\omega:(0,1)\to (0,+\infty)$ is a non-increasing probability density function. Notice that in $\R^{n}$ this last expression corresponds to the convolution of $\mu$ with $\omega(|x|/r)$, while for a general metric space it may be understood as an extension of the convolution operation.

We also remark that the starting motivation of our study is related to the problem of rectifiability of a $d$--varifold $V$ in $\R^{n}$ obtained as the limit of ``discrete varifolds'' (see \cite{Blanche-thesis,Blanche-rectifiability,BuetLeonardiMasnou}).

The paper is organized as follows. In Section $2$, we explain how to reconstruct a positive measure and then a signed measure (Theorem~\ref{prop_positive_case}) from their values on balls, thanks to Carath\'eodory's construction, answering Question~\ref{question_1}. Section $3$ deals with  Question~\ref{question_2}, that is, the reconstruction of a measure starting from a premeasure satisfying \eqref{eq_mean_premeasure}. After explaining the limitations of Carath\'eodory's construction for this problem, we prove our main result, Theorem~\ref{thm_main}, saying that by suitable packing constructions one can reconstruct a signed measure equivalent to the initial one in any metric space $(X,d)$ which is directionally limited and endowed with an asymptotically doubling measure $\nu$ (see Hypothesis \ref{hypo1} in page \pageref{hypo1}).

\subsection*{Some notations}

Let $(X,d)$ be a metric space.
\begin{itemize}
\item $\cB(X)$ denotes the $\sigma$--algebra of Borel subsets of $X$.
\item $B_r (x) = \left\lbrace y \in X \: | \: d(y,x) \leq r \right\rbrace$ is the closed ball of radius $r>0$ and center $x \in X$.
\item $U_r (x) = \left\lbrace y \in X \: | \: d(y,x) < r \right\rbrace$ is the open ball of radius $r>0$ and center $x \in X$.
\item $\cC$ denotes the collection of closed balls of $X$ and for $\delta > 0$, $\cC_\delta$ denotes the collection of closed balls of diameter $\leq \delta$.
\item $\cL^n$ is the Lebesgue measure in $\R^n$.
\item $\cP (X)$ is the set of all subsets of $X$.
\item $\card A$ is the cardinality of the set $A$.
\item $\N^\ast = \{ 1 , 2 , \ldots \}$.
\end{itemize}

\section{Carath\'eodory metric construction of outer measures}
We recall here some standard definitions and well-known facts about general measures, focusing in particular on the construction of measures from premeasures, in the sense of Carath\'eodory Method II \cite{bruckner_thomson}.

\subsection{Outer measures and metric outer measures}

\begin{dfn}[Outer measure]
Let $X$ be a set, and let $\mu^\ast : \cP (X) \rightarrow [0;+\infty]$ satisfying
\begin{enumerate}[$(i)$]
\item $\mu^\ast (\emptyset ) = 0$.
\item $\mu^\ast$ is monotone: if $A \subset B \subset X$, then $\mu^\ast (A) \leq \mu^\ast (B)$.
\item $\mu^\ast$ is \emph{countably subadditive}: if $(A_k)_{k \in \N}$ is a sequence of subsets of $X$, then
\[
\mu^\ast \left( \bigcup_{k=1}^\infty A_k \right) \leq \sum_{k=1}^\infty \mu^\ast (A_k) \: .
\] 
\end{enumerate}
Then $\mu^\ast$ is called an \emph{outer measure} on $X$.
\end{dfn}

In order to obtain a measure from an outer measure, one defines the measurable sets with respect to $\mu^\ast$.

\begin{dfn}[$\mu^\ast$--measurable set]
Let $\mu^\ast$ be an outer measure on $X$. A set $A \subset X$ is \emph{$\mu^\ast$--measurable} if for all sets $E \subset X$,
\[
\mu^\ast (E) = \mu^\ast (E \cap A) + \mu^\ast (E \setminus A) \: .
\]
\end{dfn}

We can now define a measure associated with an outer measure. Thanks to the definition of $\mu^\ast$--measurable sets, the additivity of $\mu^\ast$ among the measurable sets is straightforward, actually it happens that $\mu^\ast$ is $\sigma$--additive on $\mu^\ast$--measurable sets.

\begin{theo}[Measure associated with an outer measure, see Theorem $2.32$ in \cite{bruckner_thomson}] \label{thm_from_outer_measure_to_measure}
Let $X$ be a set, $\mu^\ast$ an outer measure on $X$, and $\cM$ the class of $\mu^\ast$--measurable sets. Then $\cM$ is a $\sigma$--algebra and $\mu^\ast$ is countably additive on $\cM$. Thus the set function $\mu$ defined on $\cM$ by $\mu(A) = \mu^\ast (A)$ for all $A \in \cM$ is a measure.
\end{theo}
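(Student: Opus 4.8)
The plan is to follow the classical Carath\'eodory argument, verifying first that $\cM$ is an algebra and then upgrading it to a $\sigma$--algebra while simultaneously establishing countable additivity. First I would record that $\emptyset \in \cM$ (immediate from $\mu^\ast(\emptyset)=0$ together with monotonicity) and that $\cM$ is closed under complementation, which is clear because the defining identity $\mu^\ast(E) = \mu^\ast(E \cap A) + \mu^\ast(E \setminus A)$ is symmetric under the exchange $A \leftrightarrow X \setminus A$. I would also note the useful reduction that, by subadditivity, to check $A \in \cM$ it suffices to verify the single inequality $\mu^\ast(E) \geq \mu^\ast(E \cap A) + \mu^\ast(E \setminus A)$ for every test set $E$, since the reverse inequality always holds.

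Next I would show closure under finite unions. Given $A, B \in \cM$ and any test set $E$, I apply the measurability of $A$ to $E$, then the measurability of $B$ to the piece $E \setminus A$, obtaining
\[
\mu^\ast(E) = \mu^\ast(E \cap A) + \mu^\ast\big((E \setminus A) \cap B\big) + \mu^\ast\big(E \setminus (A \cup B)\big).
\]
Since $E \cap (A \cup B) = (E \cap A) \cup \big((E \setminus A) \cap B\big)$, subadditivity gives $\mu^\ast(E) \geq \mu^\ast(E \cap (A \cup B)) + \mu^\ast(E \setminus (A \cup B))$, so $A \cup B \in \cM$. Combined with closure under complements this makes $\cM$ an algebra. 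The heart of the proof is then a finite-additivity identity tested against an arbitrary set: for pairwise disjoint $A_1, \dots, A_n \in \cM$ and any $E \subset X$,
\[
\mu^\ast\Big(E \cap \bigcup_{k=1}^n A_k\Big) = \sum_{k=1}^n \mu^\ast(E \cap A_k),
\]
which I would prove by induction on $n$, applying at each step the measurability of $A_n$ to the test set $E \cap \bigcup_{k=1}^n A_k$ and using disjointness to identify the two resulting pieces.

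To pass to countable unions, given a sequence $(A_k)$ in $\cM$ I first reduce to the disjoint case by replacing $A_k$ with $A_k \setminus \bigcup_{j<k} A_j$ (still in $\cM$, as $\cM$ is an algebra), and set $A = \bigcup_k A_k$ and $S_n = \bigcup_{k \leq n} A_k \in \cM$. For any test set $E$, the measurability of $S_n$, the finite-additivity identity, and monotonicity (using $E \setminus S_n \supset E \setminus A$) give
\[
\mu^\ast(E) = \mu^\ast(E \cap S_n) + \mu^\ast(E \setminus S_n) \geq \sum_{k=1}^n \mu^\ast(E \cap A_k) + \mu^\ast(E \setminus A).
\]
Letting $n \to \infty$ and then invoking subadditivity on $E \cap A = \bigcup_k (E \cap A_k)$ yields $\mu^\ast(E) \geq \mu^\ast(E \cap A) + \mu^\ast(E \setminus A)$, so $A \in \cM$ and $\cM$ is a $\sigma$--algebra. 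Taking $E = A$ in the same chain forces $\mu^\ast(A) \geq \sum_k \mu^\ast(A_k)$, and with subadditivity this is an equality, giving countable additivity of $\mu^\ast$ on $\cM$; restricting $\mu^\ast$ to $\cM$ then defines the measure $\mu$.

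I expect the main obstacle to be organizing the final limiting step so that closure under countable unions and countable additivity fall out \emph{together}: the essential device is the tested finite-additivity identity, which bounds $\mu^\ast(E)$ below uniformly in $n$ and allows one to close the estimate with countable subadditivity, rather than treating measurability and additivity as separate arguments.
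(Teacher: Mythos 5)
Your proof is correct, and it is the classical Carath\'eodory argument: the paper itself does not prove this statement but simply quotes it from the cited reference (Theorem 2.32 in the Bruckner--Bruckner--Thomson text), and your argument --- reduction to one inequality, closure under complements and finite unions, the tested finite-additivity identity, and the limiting step that yields closure under countable unions and countable additivity simultaneously --- is precisely the standard textbook proof. No gaps; nothing further to compare.
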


We now introduce \emph{metric outer measures}.

\begin{dfn}
Let $(X, d)$ be a metric space and $\mu^\ast$ be an outer measure on $X$. $\mu^\ast$ is called a \emph{metric outer measure} if
\[
\nu(E \cup F) = \nu(E) + \nu(F)
\]
for any $E,F \subset X$ such that $d(E,F) >0$.
\end{dfn}

When $\mu^\ast$ is a metric outer measure, every Borel set is $\mu^\ast$--measurable and thus the measure $\mu$ associated with $\mu^\ast$ is a Borel measure.

\begin{theo}[Carath\'eodory's Criterion, see Theorem $3.8$ in \cite{bruckner_thomson}]
Let $\mu^\ast$ be an outer measure on a metric space $(X,d)$. Then every Borel set in $X$ is $\mu^{\ast}$-measurable if and only if $\mu^\ast$ is a metric outer measure. In particular, a metric outer measure is a Borel measure.
\end{theo}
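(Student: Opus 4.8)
The statement splits into two implications, and I would dispatch the easy one first. Assuming every Borel set is $\mu^\ast$-measurable, to prove that $\mu^\ast$ is a metric outer measure I take $E,F\subset X$ with $d(E,F)>0$ and introduce the open (hence Borel, hence $\mu^\ast$-measurable) set $A=\{x\in X : d(x,E)<d(E,F)/2\}$. It contains $E$ and is disjoint from $F$, so testing the measurability identity for $A$ against the set $E\cup F$ and using $(E\cup F)\cap A=E$ together with $(E\cup F)\setminus A=F$ immediately gives $\mu^\ast(E\cup F)=\mu^\ast(E)+\mu^\ast(F)$.

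For the substantive direction I would assume $\mu^\ast$ is a metric outer measure and reduce to closed sets. By Theorem~\ref{thm_from_outer_measure_to_measure} the class $\cM$ of $\mu^\ast$-measurable sets is a $\sigma$-algebra, and since the closed sets generate $\cB(X)$ it suffices to prove that every closed $C$ is $\mu^\ast$-measurable. By subadditivity only the inequality $\mu^\ast(E)\ge\mu^\ast(E\cap C)+\mu^\ast(E\setminus C)$ must be checked, for an arbitrary test set $E$ that we may assume satisfies $\mu^\ast(E)<\infty$. Since $C$ is closed, every point of $E\setminus C$ has strictly positive distance to $C$, so setting $E_n=\{x\in E : d(x,C)\ge 1/n\}$ one has $E_n\uparrow E\setminus C$ and $d(E_n,E\cap C)\ge 1/n>0$. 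The metric property then yields $\mu^\ast(E)\ge\mu^\ast((E\cap C)\cup E_n)=\mu^\ast(E\cap C)+\mu^\ast(E_n)$ for every $n$.

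The crux, and the step I expect to be the main obstacle, is the passage to the limit $\mu^\ast(E_n)\to\mu^\ast(E\setminus C)$: the sets $E_n$ are in general not $\mu^\ast$-measurable, so continuity from below is not available for free. My plan is to control the annuli $D_k=\{x\in E : 1/(k+1)\le d(x,C)<1/k\}$, for which $E\setminus C=E_n\cup\bigcup_{k\ge n}D_k$ and hence $\mu^\ast(E\setminus C)\le\mu^\ast(E_n)+\sum_{k\ge n}\mu^\ast(D_k)$ by subadditivity. A short triangle-inequality estimate shows that $D_i$ and $D_j$ lie at positive distance whenever $|i-j|\ge 2$, so the even-indexed annuli are pairwise separated and likewise the odd-indexed ones. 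Iterating the two-set metric identity over a finite separated family then gives $\sum_{k=1}^m\mu^\ast(D_{2k})=\mu^\ast(\bigcup_{k=1}^m D_{2k})\le\mu^\ast(E)$, and symmetrically for odd indices, whence $\sum_k\mu^\ast(D_k)\le 2\mu^\ast(E)<\infty$. Consequently the tail $\sum_{k\ge n}\mu^\ast(D_k)$ vanishes as $n\to\infty$, forcing $\mu^\ast(E_n)\to\mu^\ast(E\setminus C)$. Letting $n\to\infty$ in the inequality from the previous paragraph produces $\mu^\ast(E)\ge\mu^\ast(E\cap C)+\mu^\ast(E\setminus C)$, so $C$ is measurable; the $\sigma$-algebra property of $\cM$ then upgrades this to all Borel sets, and the final sentence (a metric outer measure is a Borel measure) follows from Theorem~\ref{thm_from_outer_measure_to_measure}.
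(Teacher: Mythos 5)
Your proof is correct and complete: both directions are handled properly, and in particular the annuli trick (splitting $E\setminus C$ into the sets $D_k$, separating even and odd indices, and summing the two-set metric identity to get $\sum_k\mu^\ast(D_k)\le 2\mu^\ast(E)<\infty$) correctly circumvents the failure of continuity from below for the possibly non-measurable sets $E_n$, which is exactly the delicate point. The paper itself gives no proof of this statement — it is quoted as Theorem 3.8 of the cited Bruckner--Bruckner--Thomson reference — and your argument is precisely the classical proof found there, so there is nothing further to reconcile.
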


We recall two approximation properties of Borel measures defined on metric spaces.

\begin{theo}[see Theorems $3.13$ and $3.14$ in \cite{bruckner_thomson}] \label{thm_approximation_Borel_measure}
Let $(X,d)$ be a metric space and $\mu$ be a Borel measure on $X$.
\begin{itemize}
\item \emph{Approximation from inside}: Let $F$ be a Borel set such that $\mu (F) < +\infty$, then for any $\epsilon > 0$, there exists a closed set $F_\epsilon \subset F$ such that $\mu (F \setminus F_\epsilon) < \epsilon$.
\item \emph{Approximation from outside}: Assume that $\mu$ is finite on bounded sets and let $F$ be a Borel set, then
\[
\mu (F) = \inf \{ \mu (U) \: : \: F \subset U, \, U \text{ open set} \} \: .
\]
\end{itemize}
\end{theo}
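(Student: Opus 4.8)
The plan is to deduce both bullets from a single two-sided statement. Introduce the class
\[
\cE = \{ E \in \cB(X) : \forall\, \epsilon>0\ \exists\ C \text{ closed},\ U \text{ open},\ C \subset E \subset U,\ \mu(U\setminus C)<\epsilon \},
\]
and prove that $\cE = \cB(X)$ whenever $\mu$ is finite. By construction $\cE$ is stable under complementation, since $C\subset E\subset U$ yields $X\setminus U\subset X\setminus E\subset X\setminus C$ with $(X\setminus C)\setminus(X\setminus U)=U\setminus C$; as the closed sets generate the Borel $\sigma$-algebra, it then suffices to show that $\cE$ contains every closed set and is stable under countable unions. The two bullets are obtained by reading off, respectively, the inner closed approximant and the outer open approximant of $\cE$-membership, after reducing to a finite measure.

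First I would show that every closed set $F$ belongs to $\cE$. Here one takes $C=F$ and, for the outer open set, the shrinking neighborhoods $U_n=\{x\in X:\dist(x,F)<1/n\}$. These are open, decrease in $n$, and satisfy $\bigcap_n U_n=F$ precisely because $F$ is closed; finiteness of $\mu$ together with continuity from above then gives $\mu(U_n\setminus F)\to 0$, so some $U_n$ does the job.

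Next I would verify stability under countable unions. Given $E_k\in\cE$ and $\epsilon>0$, choose $C_k\subset E_k\subset U_k$ with $\mu(U_k\setminus C_k)<\epsilon\,2^{-k}$. The set $U=\bigcup_k U_k$ is open and contains $\bigcup_k E_k$, with excess controlled by the geometric series since $U\setminus\bigcup_k C_k\subset\bigcup_k(U_k\setminus C_k)$. The delicate point is that $\bigcup_k C_k$ need not be closed, so one replaces it by the finite subunion $\bigcup_{k\le N}C_k$, which is closed, and invokes $\mu\big(\bigcup_k E_k\big)<\infty$ to make the tail $\mu\big(\bigcup_k E_k\setminus\bigcup_{k\le N}C_k\big)$ smaller than $\epsilon$ for $N$ large, by continuity from above. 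This truncation, which is exactly where global finiteness of $\mu$ enters, is the main obstacle of the argument.

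Finally I would remove the finiteness assumption to reach the stated generality. For the inside approximation it suffices that $\mu(F)<\infty$: apply the finite-measure result to the restricted Borel measure $A\mapsto\mu(A\cap F)$, whose inner closed approximant for $F$ is automatically a closed subset of $F$ with $\mu(F\setminus C)<\epsilon$. For the outside approximation, under the hypothesis that $\mu$ is finite on bounded sets, fix $x_0\in X$ and write $X=\bigcup_j B_j(x_0)$ as a countable union of balls of finite measure; applying the finite-measure result to each annular piece $F\cap\big(B_j(x_0)\setminus B_{j-1}(x_0)\big)$ with tolerance $\epsilon\,2^{-j}$ and taking the union of the resulting open sets produces an open $U\supset F$ with $\mu(U)\le\mu(F)+\epsilon$, which gives the claimed infimum (the case $\mu(F)=\infty$ being trivial).
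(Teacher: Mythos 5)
The paper itself offers no proof of this statement: it is imported verbatim from Bruckner--Thomson (Theorems 3.13 and 3.14), so your argument can only be judged on its own correctness. Your strategy is the standard regularity argument, and most of it is sound: the class $\cE$ is stable under complementation for exactly the reason you give; closed sets belong to $\cE$ (for finite $\mu$) via the open neighborhoods $U_n=\{x : \dist(x,F)<1/n\}$, whose intersection is $F$ because $F$ is closed, together with continuity from above; countable unions are handled correctly by the finite truncation $\bigcup_{k\le N}C_k$, where finiteness of $\mu$ again enters through continuity from above; and the reduction of the \emph{inner} approximation to the finite case via the restricted measure $A\mapsto\mu(A\cap F)$ is correct, since a closed set that works for the restricted measure is automatically a closed subset of $F$ with $\mu(F\setminus C)<\epsilon$.

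The one step that does not parse as written is the \emph{outer} approximation. You say you ``apply the finite-measure result to each annular piece'' $F_j=F\cap\bigl(B_j(x_0)\setminus B_{j-1}(x_0)\bigr)$ --- but with respect to which finite measure? The measure $\mu$ itself is not finite, so the finite-measure result cannot be invoked for it; and if you invoke it for a restriction $\nu_j=\mu(\cdot\cap W_j)$ with $W_j$ bounded and $F_j\subset W_j$, its conclusion controls $\nu_j(V_j)=\mu(V_j\cap W_j)$, not $\mu(V_j)$: the open set $V_j$ it produces may perfectly well have infinite $\mu$-measure far away from $W_j$. The fix is routine but has to be stated: take $W_j$ to be the bounded \emph{open} ball $U_{j+1}(x_0)$ (in the paper's notation; note $F_j\subset B_j(x_0)\subset U_{j+1}(x_0)$), apply the finite case to the finite Borel measure $\nu_j=\mu(\cdot\cap U_{j+1}(x_0))$ to obtain an open $V_j\supset F_j$ with $\nu_j(V_j\setminus F_j)<\epsilon\,2^{-j}$, and then set $U_j:=V_j\cap U_{j+1}(x_0)$. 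This $U_j$ is open, contains $F_j$, and now satisfies
\[
\mu(U_j\setminus F_j)=\mu\bigl((V_j\setminus F_j)\cap U_{j+1}(x_0)\bigr)<\epsilon\,2^{-j}
\]
for $\mu$ itself, so that $U=\bigcup_j U_j$ is open, contains $F$, and $\mu(U)\le\mu(F)+\epsilon$ as you claim. With this correction --- the same restriction trick you already used for the inner approximation, plus the intersection step that transfers the bound from the restricted measure to $\mu$ --- your proof is complete.
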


We can now introduce Carath\'eodory's construction of metric outer measures (Method II, see \cite{bruckner_thomson}).

\begin{dfn}[Premeasure]
Let $X$ be a set and $\cC$ be a family of subsets of $X$ such that $\emptyset \in \cC$. A nonnegative function $p$ defined on $\cC$ and such that $p(\emptyset) = 0$ is called a \emph{premeasure}.
\end{dfn}


\begin{theo}[Carath\'eodory's construction, Method II] \label{thm_caratheodory_construction}
Suppose $(X,d)$ is a metric space and $\cC$ is a family of subsets of $X$ which contains the empty set. Let $p$ be a premeasure on $\cC$. For each $\delta > 0$, let
\[
\cC_\delta= \{A \in \cC \: | \: \diam(A) \leq \delta\} 
\]
and for any $E \subset X$ define
\[
\nu^p_\delta(E) = \inf \left\lbrace \sum_{i=0}^\infty p(A_i)\,\bigg| \, E \subset \bigcup_{i\in \N} A_i,\forall i, \: A_i\in \cC_\delta \right\rbrace.
\]
As $\nu^p_\delta \geq \nu^p_{\delta^\prime}$ when $\delta \leq \delta^\prime$, the limit
\[
\nu^{p,\ast} (E) = \lim_{\delta \to 0} \nu^p_\delta(E)
\]
exists (possibly infinite). Then $\nu^{p,\ast}$ is a metric outer measure on $X$.
\end{theo}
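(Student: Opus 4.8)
The plan is to verify the defining properties of a metric outer measure in three stages: first that each set function $\nu^p_\delta$ is itself an outer measure, then that these properties survive the passage to the limit $\delta \to 0$, and finally that the separation built into Method II forces metric additivity.

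For fixed $\delta > 0$ I would first check that $\nu^p_\delta$ is an outer measure. The normalization $\nu^p_\delta(\emptyset) = 0$ follows by covering $\emptyset$ with the single set $\emptyset \in \cC_\delta$, recalling $p(\emptyset) = 0$. Monotonicity is immediate since every admissible cover of $B$ is an admissible cover of any $A \subset B$. For countable subadditivity, given $(E_k)_{k \in \N}$ and $\eta > 0$, I would choose for each $k$ a cover $(A_i^k)_i \subset \cC_\delta$ of $E_k$ with $\sum_i p(A_i^k) \le \nu^p_\delta(E_k) + \eta 2^{-k}$; the countable family $\{A_i^k\}_{i,k}$ covers $\bigcup_k E_k$ and yields $\nu^p_\delta(\bigcup_k E_k) \le \sum_k \nu^p_\delta(E_k) + \eta$, after which I let $\eta \to 0$. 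The monotonicity $\nu^p_\delta \ge \nu^p_{\delta'}$ for $\delta \le \delta'$ (which guarantees the limit exists) comes from the inclusion $\cC_\delta \subset \cC_{\delta'}$, so that fewer covers compete in the infimum defining $\nu^p_\delta$. These properties then pass to the limit: $\nu^{p,\ast}(\emptyset) = 0$ and monotonicity are clear, while for subadditivity I would use $\nu^p_\delta(\bigcup_k E_k) \le \sum_k \nu^p_\delta(E_k) \le \sum_k \nu^{p,\ast}(E_k)$ for every $\delta$ and send $\delta \to 0$ on the left. Thus $\nu^{p,\ast}$ is an outer measure.

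The crux of the argument is metric additivity. Let $E, F \subset X$ with $\rho := d(E,F) > 0$. Subadditivity already gives $\nu^{p,\ast}(E \cup F) \le \nu^{p,\ast}(E) + \nu^{p,\ast}(F)$, so only the reverse inequality needs work. Here I would exploit the diameter constraint: fix $\delta < \rho$ and let $(A_i)_{i \in \N} \subset \cC_\delta$ be any cover of $E \cup F$. Since $\diam(A_i) \le \delta < \rho = d(E,F)$, no single $A_i$ can meet both $E$ and $F$, so the index sets $I_E = \{i : A_i \cap E \ne \emptyset\}$ and $I_F = \{i : A_i \cap F \ne \emptyset\}$ are disjoint; moreover $(A_i)_{i \in I_E}$ covers $E$ and $(A_i)_{i \in I_F}$ covers $F$. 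Using $p \ge 0$ and disjointness,
\[
\sum_i p(A_i) \ge \sum_{i \in I_E} p(A_i) + \sum_{i \in I_F} p(A_i) \ge \nu^p_\delta(E) + \nu^p_\delta(F).
\]
Taking the infimum over all such covers gives $\nu^p_\delta(E \cup F) \ge \nu^p_\delta(E) + \nu^p_\delta(F)$, and letting $\delta \to 0$ yields $\nu^{p,\ast}(E \cup F) \ge \nu^{p,\ast}(E) + \nu^{p,\ast}(F)$. Combined with subadditivity this is the desired equality.

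The only genuinely delicate point is this last step, where the smallness $\delta < d(E,F)$ of the covering sets is precisely what prevents a covering set from straddling $E$ and $F$; this is exactly the feature that Method I lacks and that makes Method II produce a metric (hence Borel) outer measure. Everything else is a routine verification of the outer-measure axioms together with their stability under the monotone limit in $\delta$.
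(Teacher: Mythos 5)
Your proof is correct: the paper states this theorem without proof (it is recalled from \cite{bruckner_thomson}), and your argument is exactly the standard Method II proof that the citation points to — verify that each $\nu^p_\delta$ is an outer measure, pass these properties through the monotone limit in $\delta$, and obtain metric additivity by noting that once $\delta < d(E,F)$ no covering set of diameter $\leq \delta$ can meet both $E$ and $F$. The only points left implicit (the convention that an empty infimum is $+\infty$, and the trivial case where some $\nu^p_\delta(E_k)=+\infty$ in the subadditivity argument) are standard and do not constitute gaps.
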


\subsection{Effects of Carath\'eodory's construction on positive Borel measures}
Let $(X,d)$ be a metric space and $\mu$ be a positive Borel $\sigma$--finite measure on $X$. Let $\cC$ be the set of closed balls and let $p$ be the premeasure defined on $\cC$ by
\begin{equation} \label{dfn_premeasure} \begin{array}{lllll}
p & : & \cC & \rightarrow & [0,+\infty] \\
  &   &   B & \mapsto     &  \mu (B)
\end{array} \end{equation}
Let $\mu^{p,\ast}$ be the metric outer measure obtained by Carath\'eodory's metric construction applied to $(\cC,p)$ and then $\mu^p$ the Borel measure associated with $\mu^{p,\ast}$. Then, the following question arises. 
\begin{question} \label{question_positive_case}\it 
Do we have $\mu^p = \mu$? In other terms, can we recover the initial measure by Carath\'eodory's Method II?
\end{question}
The following lemma shows one of the two inequalities needed to positively answer Question \ref{question_positive_case}.
\begin{lemma} \label{lem_easy_inequality}
Let $(X,d)$ be a metric space and $\mu$ be a positive Borel measure on $X$. Then, in the same notations as above, we have $\mu \leq \mu^p$.
\end{lemma}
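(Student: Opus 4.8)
The plan is to establish $\mu(E) \le \mu^p(E)$ for every Borel set $E$ directly from the definition of Carath\'eodory's construction, using nothing more than the monotonicity and countable subadditivity of $\mu$ itself. This is the \emph{easy} half of a positive answer to Question~\ref{question_positive_case}, in sharp contrast with the reverse inequality, which genuinely requires a Besicovitch-type covering argument.

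First I would fix $E \subset X$ and $\delta > 0$ and consider an arbitrary admissible cover, i.e. a countable family $(A_i)_{i \in \N} \subset \cC_\delta$ with $E \subset \bigcup_i A_i$ (if no such cover exists, then $\mu^p_\delta(E) = +\infty$ and there is nothing to prove). Since each $A_i$ is a closed ball, hence a Borel set, monotonicity and countable subadditivity of the measure $\mu$ give
\[
\mu(E) \le \mu\Big(\bigcup_{i} A_i\Big) \le \sum_i \mu(A_i) = \sum_i p(A_i),
\]
where the last equality is just the definition $p(A_i) = \mu(A_i)$. Taking the infimum over all admissible covers yields $\mu(E) \le \mu^p_\delta(E)$.

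The crucial (and here trivial) feature is that this bound does not depend on $\delta$: the left-hand side $\mu(E)$ stays fixed while $\delta$ shrinks. Hence passing to the limit $\delta \to 0$ in Theorem~\ref{thm_caratheodory_construction} preserves the inequality, giving $\mu(E) \le \mu^{p,\ast}(E)$. Finally, since $\mu^{p,\ast}$ is a metric outer measure, Carath\'eodory's Criterion ensures that every Borel set is $\mu^{p,\ast}$-measurable, so that $\mu^p(E) = \mu^{p,\ast}(E)$ for Borel $E$ by Theorem~\ref{thm_from_outer_measure_to_measure}; combining this with the previous bound gives $\mu(E) \le \mu^p(E)$ for all Borel $E$, i.e. $\mu \le \mu^p$.

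As for obstacles, there is essentially none of substance in this direction: the whole argument rests on the subadditivity of $\mu$, and the limit $\delta \to 0$ is harmless precisely because the lower bound is $\delta$-independent. The only points deserving a word of care are the degenerate case of a set admitting no countable cover by balls of small diameter (handled by the convention that the infimum over an empty family equals $+\infty$) and the routine identification of the outer measure $\mu^{p,\ast}$ with its associated measure $\mu^p$ on Borel sets. The genuine difficulty is deferred entirely to the reverse inequality $\mu^p \le \mu$, which is where the covering lemmas will be needed.
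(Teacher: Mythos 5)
Your proof is correct and follows essentially the same route as the paper's: the paper bounds a near-optimal cover (within $\eta$ of the infimum $\mu^p_\delta(A)$) from below by $\mu(A)$ using subadditivity of $\mu$, whereas you bound an arbitrary admissible cover and then pass to the infimum, which is the same argument presented slightly more cleanly. The $\delta$-independence of the lower bound and the identification $\mu^p = \mu^{p,\ast}$ on Borel sets are handled exactly as in the paper.
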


\begin{proof}
Let $A \subset X$ be a Borel set, we have to show that $\mu (A) \leq \mu^p (A) = \mu^{p,\ast} (A)$. This inequality relies only on the definition of $\mu^p_\delta$ as an infimum. Indeed, let $\delta >0$ be fixed, then for any $\eta > 0$ there exists a countable collection of closed balls $( B_j^\eta )_{j \in \N} \subset \cC_\delta$ such that 
\[
A \subset \bigcup_j B_j^\eta\qquad \text{and}\qquad \mu^p_\delta (A) \geq \sum_{j=1}^\infty p (B_j^\eta) - \eta\,,
\] 
so that
\[
\mu^p_\delta (A) + \eta \geq \sum_{j=1}^\infty p (B_j^\eta) = \sum_{j=1}^\infty \mu (B_j^\eta) \geq \mu \Big( \bigcup_j B_j^\eta  \Big) \geq \mu (A) \: .
\]
Letting $\eta \rightarrow 0$ and then $\delta \rightarrow 0$ leads to $\mu (A) \leq \mu^p (A)$.
\end{proof}

A consequence of Davies' result \cite{davies} is that the other inequality cannot be true in general. We need extra assumptions on the metric space $(X,d)$ ensuring that open sets are ``well approximated'' by closed balls, that is, we need some specific covering property. In $\R^n$ with the Euclidean norm, this approximation of open sets by disjoint unions of balls is provided by Besicovitch Theorem, which we recall here:

\begin{theo}[Besicovitch Theorem, see Corollary $1$ p. $35$ in \cite{evans}]
\label{thm_besicovitch}
Let $\mu$ be a Borel measure on $\R^n$ and consider any collection $\cF$ of non degenerated closed balls. Let $A$ denote the set of centers of the balls in $\cF$. Assume $\mu (A) < +\infty$ and that 
\[
\inf \left\lbrace r>0 \: | \: B_r (a) \in \cF \right\rbrace = 0 \qquad \forall\, a \in A\,.
\]
Then, for every open set $U \in \R^n$, there exists a countable collection $\cG$ of disjoint balls in $\cF$ such that
\[
\bigsqcup_{B \in \cG} B \subset U \quad \text{and} \quad \mu \left( (A \cap U) - \bigsqcup_{B \in \cG} B \right) = 0 \: .
\]
\end{theo}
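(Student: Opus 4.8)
The plan is to deduce this filling/packing statement from the \emph{Besicovitch covering lemma}, which provides the crucial dimensional input: there exists a constant $N_n$, depending only on $n$, such that from any collection of nondegenerate closed balls with uniformly bounded radii, whose centers form a set $A$, one can extract $N_n$ countable subfamilies $\cG_1,\dots,\cG_{N_n}$, each made of pairwise disjoint balls, so that $A\subset\bigcup_{i=1}^{N_n}\bigcup_{B\in\cG_i}B$. I would take this lemma as a black box; it is the only genuinely geometric ingredient, and it is exactly where the Euclidean structure of $\R^n$ enters.

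First I would reduce to a collection adapted to the open set $U$. Replacing $\cF$ by the subfamily $\cF_U$ of balls of radius $\le 1$ that are contained in $U$, I note that $\cF_U$ still finely covers $A\cap U$: since $U$ is open and $\inf\{r : B_r(a)\in\cF\}=0$ for every center, each $a\in A\cap U$ is the center of arbitrarily small balls of $\cF$, which lie in $U$ once their radius is small enough. The radius bound $\le 1$ is imposed only to guarantee the uniform bound on diameters required by the covering lemma, and it does not affect fineness.

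The core is a single-step density estimate followed by iteration. Applying the covering lemma to $\cF_U$, since $A\cap U\subset\bigcup_{i=1}^{N_n}\bigcup_{B\in\cG_i}B$ and $\mu(A\cap U)\le\mu(A)<+\infty$, at least one index $i_0$ satisfies $\mu\big((A\cap U)\cap\bigcup_{B\in\cG_{i_0}}B\big)\ge N_n^{-1}\mu(A\cap U)$. As the balls of $\cG_{i_0}$ are disjoint and the total mass is finite, I can retain a finite subcollection $\{B_1,\dots,B_m\}$ capturing at least half of this mass, so that, setting $\theta:=1-\tfrac{1}{2N_n}<1$,
\[
\mu\Big((A\cap U)\setminus\bigsqcup_{j=1}^{m}B_j\Big)\le\theta\,\mu(A\cap U).
\]
Now $U_1:=U\setminus\bigcup_{j=1}^{m}B_j$ is again open (a finite union of closed balls is removed), it is disjoint from the chosen balls, and $A\cap U_1=(A\cap U)\setminus\bigsqcup_j B_j$. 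Repeating the argument on $U_1$ with $\cF_{U_1}$ yields a further finite disjoint collection, automatically disjoint from the first because it lies in $U_1$, and reduces the uncovered mass by another factor $\theta$.

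Iterating and letting $\cG$ be the union of all the finite collections produced, I obtain a countable family of pairwise disjoint balls with $\bigsqcup_{B\in\cG}B\subset U$ and $\mu\big((A\cap U)\setminus\bigsqcup_{B\in\cG}B\big)\le\theta^{k}\mu(A\cap U)$ after $k$ steps; letting $k\to\infty$ gives the $\mu$-null remainder. The main obstacle is really the covering lemma itself (the existence of the dimensional constant $N_n$), which I treat as known; within the present argument the delicate points are the preservation of the fine-covering property when passing to the shrinking open sets $U_k$, which relies on openness together with the vanishing-infimum hypothesis on the radii, and the cross-step disjointness, which relies on the balls being closed and removed from the open set before the next extraction.
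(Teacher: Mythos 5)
The paper does not prove this statement at all: it is quoted as a known result, with the proof deferred to the cited reference (Evans--Gariepy, Corollary 1, p.~35). Your argument is correct and is precisely the standard proof from that reference --- reduce to balls of bounded radius contained in $U$, apply the Besicovitch covering lemma to extract $N_n$ disjoint subfamilies, keep a finite piece of the family carrying at least a fraction $\tfrac{1}{N_n}$ of the (finite) mass of $A\cap U$, remove those closed balls to get a new open set, and iterate to obtain geometric decay of the uncovered mass --- so there is nothing to compare beyond noting that your write-up correctly handles the two delicate points (fineness of the restricted family in the shrinking open sets, and cross-step disjointness).
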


A generalization of Besicovitch Theorem for metric spaces is due to Federer, under a geometric assumption involving the distance function.

\begin{dfn}[Directionally limited distance, see $2.8.9$ in \cite{federer}]
\label{def:federer}
Let $(X,d)$ be a metric space, $A \subset X$ and $\xi > 0$, $0<\eta\leq \frac{1}{3}$, $\zeta \in \N^\ast$. The distance $d$ is said to be \emph{directionally $(\xi,\eta,\zeta)$--limited at $A$} if the following holds. Take any $a \in A$ and $B \subset A \cap \left( U_\xi (a) \setminus \{a\} \right)$ satisfying the following property: let $b, c \in B$ with $b \neq c$ and assume without loss of generality that $d(a,b) \geq d(a,c)$, then for all $x \in X$ such that $d(a,x) = d(a,c)$ and $d(a,x) + d(x,b)  = d(a,b)$ one has
\begin{equation}\label{directionally_limited_distance_eq} 
\frac{d(x,c)}{d(a,c)} \geq \eta \: .
\end{equation}
Then $\card B \leq \zeta$.
\end{dfn}

Let us say a few words about this definition. If $(X,|\cdot|)$ is a Banach space with strictly convex norm, then the above relations involving $x$ imply that 
\[
x = a + \frac{|a-c|}{|a-b|} (b-a ) \: ,
\]
hence in this case \eqref{directionally_limited_distance_eq} is equivalent to
\[
\frac{d(x,c)}{d(a,c)} = \left| \frac{c-a}{|c-a|}  - \frac{b-a}{|b-a|} \right| \geq \eta \: .
\]
Consequently, if $X$ is finite-dimensional, and thanks to the compactness of the unit sphere, for a given $\eta$ there exists $\zeta \in \N$ such that $(X, |\cdot |)$ is directionally $(\xi,\eta,\zeta)$-limited for all $\xi>0$. Hereafter we provide two examples of metric spaces that are not directionally limited.

\begin{xmpl}
Consider in $\R^2$ the union $X$ of a countable number of half-lines, joining at the same point $a$. Then the geodesic metric $d$ induced on $X$ by the ambient metric is not directionally limited at $\{a\}$.

\noindent \begin{minipage}{0.70\textwidth}
Indeed let $B = X \cap \{y \: : \: d(a,y) = \xi \}$, let $b$ and $c \in B$ lying in two different lines, at the same distance $d(a,b) = d(a,c)=\xi$ of $a$. Then $x \in X$ such that $d(a,x) =d(a,c)=\xi$ and $d(b,x) = d(a,b) - d(a,c) = 0$ implies $x=b$ and thus
\[
\frac{d(x,c)}{d(a,c)} = \frac{d(b,c)}{\xi} = \frac{2\xi}{\xi} = 2 \: .
\]
but $\card \, B$ is not finite.
\end{minipage}
\quad
\begin{minipage}{0.25\textwidth}
\includegraphics[scale=0.8]{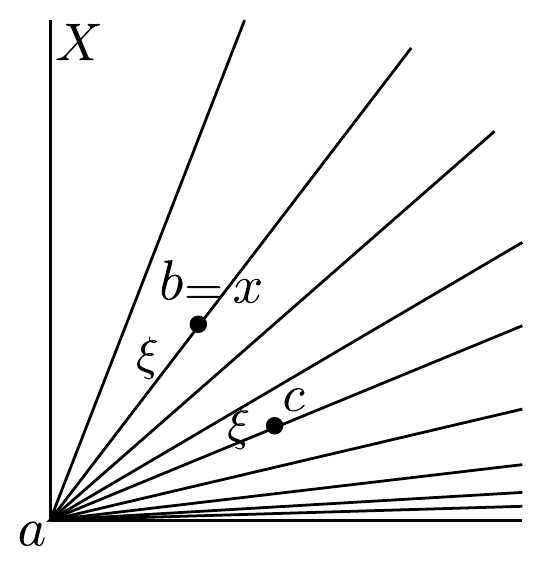}
\end{minipage}
\end{xmpl}

\begin{xmpl}
If $X$ is a separable Hilbert space and $B=(e_k)_{k \in \N}$ a Hilbert basis, $a \in H$ and $b= a + e_j, \, c= a + e_k \in a+B$, the Hilbert norm is strictly convex so that $d(a,x) = d(a,c)$, $d(b,x) = d(a,b) - d(a,c)$ uniquely define $x$ as
\[
x = a + \frac{|e_k|}{|e_j|} e_j = b \text{ and } \frac{d(x,c)}{d(a,c)} = \left| e_k  - e_j \right| = 2 \geq \eta
\]
for all $\eta \leq \frac{1}{3}$ and $\card (a + B)$ is infinite. Therefore $H$ is not directionally limited (nowhere).
\end{xmpl}

We can now state the generalized versions of Besicovitch Covering Lemma and Besicovitch Theorem for directionally limited metric spaces.

\begin{theo}[Generalized Besicovitch Covering Lemma, see $2.8.14$ in \cite{federer}] \label{thm_generalized_besicovic_federer_covering_lemma}
Let $(X,d)$ be a separable metric space directionally $(\xi,\eta,\zeta)$--limited at $A \subset X$. Let $0 < \delta < \frac{\xi}{2}$ and $\cF$ be a family of closed balls of radii less than $\delta$ such that each point of $A$ is the center of some ball of $\cF$. Then, there exists $2 \zeta +1$ countable subfamilies of $\cF$ of disjoint closed balls, $\cG_1, \ldots , \cG_{2 \zeta + 1}$ such that
\[
A \subset \bigcup_{j = 1}^{2 \zeta + 1} \bigsqcup_{B \in \cG_j} B \: .
\]
\end{theo}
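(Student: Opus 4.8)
The plan is to produce a single countable subfamily of $\cF$ that already covers $A$, and then to split it into $2\zeta+1$ disjoint subfamilies by a coloring argument; the metric geometry encoded in Definition \ref{def:federer} will enter only to control the local overlap of the selected balls.

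First I would select balls by a transfinite greedy procedure: at each stage, among the points of $A$ not yet covered by the previously chosen balls, pick a center $a$ carrying a ball $B_{r}(a)\in\cF$ whose radius $r$ exceeds half the supremum of the radii of all such admissible balls. Writing the chosen balls as $B_\alpha = B_{r_\alpha}(a_\alpha)$, two elementary facts follow from the rule: for $\beta<\alpha$ the center $a_\alpha$ is not covered by $B_\beta$, so $d(a_\alpha,a_\beta)>r_\beta$; and since $B_\alpha$ was itself admissible at stage $\beta$, the radii are comparable, $r_\beta>\tfrac12 r_\alpha$. To see that only countably many balls get selected, I would group them into dyadic radius bands $r\in(2^{-m-1}\delta,2^{-m}\delta]$: within one band the centers are pairwise separated by more than $2^{-m-1}\delta$, hence form a countable set in the separable space $X$, and summing over $m$ shows the whole family is countable, say $\{B_k\}_{k\in\N}$. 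Being an injection of an ordinal into a countable pool, the recursion must halt at a countable stage, and it can only halt when no uncovered center remains; therefore $A\subset\bigcup_k B_k$. Note that this covering step uses only separability, not directional limitation.

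The heart of the proof is to bound, for each $k$, the number of earlier balls meeting $B_k$, namely to show $\card J_k\le 2\zeta$ for $J_k=\{j<k:\,B_j\cap B_k\neq\emptyset\}$. For $j\in J_k$ one has $d(a_j,a_k)\le r_j+r_k<2\delta<\xi$, so $a_j\in U_\xi(a_k)\setminus\{a_k\}$, together with $d(a_k,a_j)>r_j$ and comparable radii $r_j>\tfrac12 r_k$. The key is to verify the cone inequality \eqref{directionally_limited_distance_eq} of Definition \ref{def:federer} with apex $a=a_k$ for the collection of centers $\{a_j:j\in J_k\}$: I would argue that if the inequality failed for some pair $a_{j_1},a_{j_2}$, then the auxiliary point $x$ realizing the failure would lie so close to one center and so aligned with the other that one of the two centers would fall inside the other's selected ball, contradicting the separation $d(a_{j_1},a_{j_2})>\min(r_{j_1},r_{j_2})$ recorded above. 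Once \eqref{directionally_limited_distance_eq} is seen to hold, Definition \ref{def:federer} caps the number of admissible centers by $\zeta$; splitting $J_k$ into two regimes according to whether $r_j\ge r_k$ or $r_j<r_k$ (so that in each regime the roles of ``farther'' and ``nearer'' point in the definition are fixed) then yields $\card J_k\le 2\zeta$. This translation of the abstract directional-limitation inequality into a concrete bound on mutually meeting balls is the step I expect to be the main obstacle, since it requires a careful quantitative comparison of $d(a_{j_1},a_{j_2})$, $d(x,a_{j_2})$ and the radii, and is exactly where the constant $\zeta$ of the metric enters.

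Finally I would assemble the families by greedy coloring. Enumerating $\{B_k\}_{k\in\N}$ and processing $k$ in increasing order, I assign to $B_k$ any color in $\{1,\dots,2\zeta+1\}$ not already used by the balls $\{B_j:j\in J_k\}$; this is always possible because $\card J_k\le 2\zeta<2\zeta+1$. Letting $\cG_i$ be the set of balls receiving color $i$, any two balls of the same color are disjoint: if $B_j$ and $B_k$ with $j<k$ met, then $j\in J_k$ and they would have been forced to differ in color. Hence each $\cG_i$ is a countable subfamily of disjoint balls, and since the $B_k$ already cover $A$, we obtain $A\subset\bigcup_{i=1}^{2\zeta+1}\bigsqcup_{B\in\cG_i}B$, which is the claim.
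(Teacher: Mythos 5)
Your overall skeleton (transfinite greedy selection with the half-sup rule, countability from separability, a backward-degree bound $\card J_k\le 2\zeta$, then greedy coloring with $2\zeta+1$ colors) is the right one; it is also essentially the skeleton of Federer's proof of 2.8.14, which the paper cites rather than reproduces, and your covering and coloring steps are correct. The gap is exactly at the step you yourself flagged as the main obstacle, and your proposed resolution of it is false. You claim that if a pair of earlier centers $a_{j_1},a_{j_2}$ violates \eqref{directionally_limited_distance_eq} at the apex $a_k$, then one of the two centers must lie inside the other's selected ball. Counterexample on the real line (which is directionally limited everywhere): take $a_k=0$ with $r_k=1$, and let the earlier balls be $B_{0.55}(1.5)$ (selected first) and $B_{0.55}(0.6)$ (selected second). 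Every greedy constraint holds: the running supremum of admissible radii equals $1$ at each stage, so the rule $r>\tfrac12\sup$ permits selecting the two balls of radius $0.55$ first; each newly selected center is uncovered, and $0$ is still uncovered at the last stage; both earlier balls meet $B_1(0)$. Yet for the pair $b=1.5$, $c=0.6$ the only admissible point $x$ in Definition \ref{def:federer} is $x=c$ itself (indeed $d(a,x)=0.6$ and $d(a,x)+d(x,b)=1.5$ force $x=0.6$), so $d(x,c)=0<\eta\, d(a,c)$: the cone inequality fails as badly as possible, while $d(b,c)=0.9>0.55=\min(r_{j_1},r_{j_2})$, so neither center lies in the other's ball. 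Your radius split does not repair this: in the example both earlier balls have the same radius, hence fall in the same regime, and in any case the roles of ``farther'' and ``nearer'' in Definition \ref{def:federer} are governed by the distances $d(a_k,a_j)$, not by the radii $r_j$.

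The failure is structural, not a computational slip. Directional limitation at $a_k$ bounds only those sets of centers \emph{all} of whose pairs satisfy \eqref{directionally_limited_distance_eq}; but among earlier balls meeting $B_k$ there may be many whose radii are comparable to $r_k$ and whose centers lie within a few multiples of $r_k$ of $a_k$, and for these the mutual separation guaranteed by the greedy selection (of order $r_k$) can be much smaller than $\eta$ times their distance to $a_k$, so nearly collinear or small-angle configurations violating the cone inequality are unavoidable. In $\R^n$ this comparable-radius regime is disposed of by a volume-packing argument (as in the proof of Theorem \ref{thm_besicovitch} in \cite{evans}), which has no counterpart in a general metric space; controlling it using only the directional-limitation hypothesis is precisely the delicate core of Federer's argument for 2.8.14 in \cite{federer}. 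So, as written, your proof establishes the selection, countability and coloring steps, but the key bound $\card J_k\le 2\zeta$ — the one place where $\zeta$ must enter — is not proved, and filling it requires a genuinely different mechanism rather than the separation-versus-cone comparison you propose.
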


\begin{remk}
In $\R^n$ endowed with the Euclidean norm it is possible to take $\xi = +\infty$ and $\zeta$ only dependent on $\eta$ and $n$. If we fix $\eta = \frac{1}{3}$, then $\zeta = \zeta_n$ only depends on the dimension $n$.
\end{remk}

\begin{theo}[Generalized Besicovitch Theorem, see $2.8.15$ in \cite{federer}] \label{thm_generalized_besicovic_federer}
Let $(X,d)$ be a separable metric space directionally $(\xi,\eta,\zeta)$--limited at $A \subset X$. Let $\cF$ be a family of closed balls of $X$ satisfying
\begin{equation} \label{hyp_fine_cover}
\inf \left\lbrace r>0 \: | \: B_r (a) \in \cF \right\rbrace = 0, \qquad \forall\, a\in A\:,
\end{equation}
and let $\mu$ be a positive Borel measure on $X$ such that $\mu (A) < +\infty$. Then, for any open set $U \subset X$ there exists a countable disjoint family $\cG$ of $\cF$ such that
\[
\bigsqcup_{B \in \cG} B \subset U \quad \text{and} \quad \mu \left( (A \cap U) - \bigsqcup_{B \in \cG} B \right) = 0 \: .
\]
\end{theo}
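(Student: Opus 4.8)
The plan is to derive this Vitali-type statement from the Generalized Besicovitch Covering Lemma (Theorem~\ref{thm_generalized_besicovic_federer_covering_lemma}) by a greedy exhaustion argument: at each stage I would select \emph{finitely many} disjoint balls of $\cF$ covering a fixed positive fraction of the still-uncovered part of $A\cap U$, and then iterate so that the residual $\mu$-measure decays geometrically to $0$.

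First I would dispose of the trivial case: since $\mu(A)<+\infty$ we have $\mu(A\cap U)<+\infty$, and if this vanishes the empty family works, so assume it is positive. Fix $\delta$ with $0<\delta<\xi/2$. For an open set $W\subset U$, let $\cF_W$ be the subfamily of those $B\in\cF$ with $B\subset W$ and radius $<\delta$. The key observation is that, because $W$ is open and $\cF$ satisfies the fine-cover condition \eqref{hyp_fine_cover}, every $a\in A\cap W$ is the center of some ball of $\cF_W$ (choose the radius small enough for the closed ball to sit inside $W$); hence $\cF_W$ is admissible for the covering lemma relative to $A\cap W$. Applying Theorem~\ref{thm_generalized_besicovic_federer_covering_lemma} to $\cF_U$ yields disjoint subfamilies $\cG_1,\dots,\cG_{2\zeta+1}$ whose union of balls contains $A\cap U$. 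By subadditivity, $\mu(A\cap U)\le\sum_{j}\mu\big(A\cap\bigsqcup_{B\in\cG_j}B\big)$, so some index $j_0$ satisfies $\mu\big(A\cap\bigsqcup_{B\in\cG_{j_0}}B\big)\ge(2\zeta+1)^{-1}\mu(A\cap U)$. Since the balls of $\cG_{j_0}$ are disjoint with finite total $\mu$-mass on $A$, a finite subfamily $\cH\subset\cG_{j_0}$ captures at least half of this, giving $\mu\big(A\cap\bigsqcup_{B\in\cH}B\big)\ge\lambda\,\mu(A\cap U)$ with $\lambda=\tfrac{1}{2(2\zeta+1)}\in(0,1)$, and $\bigsqcup_{B\in\cH}B$ is a finite union of closed balls, hence closed and contained in $U$.

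I would then iterate this one-step selection. Setting $U_0=U$, given an open set $U_{k-1}$ with $\mu(A\cap U_{k-1})>0$, I apply the selection to $\cF_{U_{k-1}}$ to obtain a finite disjoint family $\cG^k$ of balls contained in $U_{k-1}$ with $\mu\big(A\cap\bigsqcup_{\cG^k}B\big)\ge\lambda\,\mu(A\cap U_{k-1})$, and put $U_k=U_{k-1}\setminus\bigsqcup_{B\in\cG^k}B$, which is again open. Since each $\cG^k$ lives in $U_{k-1}=U\setminus\bigcup_{i<k}\bigsqcup_{\cG^i}B$, the balls chosen at different stages are pairwise disjoint, and $\mu(A\cap U_k)\le(1-\lambda)\mu(A\cap U_{k-1})$, whence $\mu(A\cap U_k)\le(1-\lambda)^k\mu(A\cap U)\to0$. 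Taking $\cG=\bigcup_k\cG^k$, a countable disjoint subfamily of $\cF$ with $\bigsqcup_{B\in\cG}B\subset U$, I get $(A\cap U)\setminus\bigsqcup_{B\in\cG}B=\bigcap_k(A\cap U_k)$, of $\mu$-measure $0$, which is the claim.

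The hard part will be the bookkeeping that keeps the accumulated family \emph{globally} disjoint: this forces every selection to be drawn only from balls contained in the current open set $U_{k-1}$, together with the elementary but essential fact that a finite union of closed balls is closed (so that $U_k$ stays open and the fine-cover property survives on $A\cap U_k$). The second delicate point is the passage from the countable family produced by the covering lemma to a finite one, where finiteness of $\mu(A\cap U)$ is exactly what guarantees that a finite truncation still retains a definite fraction $\lambda$ of the mass; note that $\lambda$ depends only on $\zeta$, so the geometric decay, and hence the conclusion, holds uniformly in $U$.
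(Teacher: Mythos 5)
The paper does not prove this theorem at all---it is imported verbatim from Federer (2.8.15 in \cite{federer})---so there is no internal proof to compare with; your argument is correct, and it is exactly the classical derivation of this Vitali-type theorem from the Besicovitch covering lemma (pigeonhole among the $2\zeta+1$ disjoint subfamilies, finite truncation made possible by $\mu(A)<+\infty$, then geometric exhaustion on the open remainders $U_k$), which is the route Federer and Evans--Gariepy themselves follow. The only step worth spelling out is that the decay estimate $\mu(A\cap U_k)\le(1-\lambda)\,\mu(A\cap U_{k-1})$ uses the $\mu$-measurability of the finite union $\bigsqcup_{B\in\cG^k}B$ (a Borel set) to split $A\cap U_{k-1}$, since $A$ itself is not assumed Borel---a technicality already implicit in the statement's use of $\mu(A)$.
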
 

We can now prove the coincidence of the initial measure and the reconstructed measure under assumptions of Theorem~\ref{thm_generalized_besicovic_federer}.

\begin{prop} \label{prop_positive_case}
Let $(X,d)$ be a separable metric space directionally $(\xi,\eta,\zeta)$--limited at $X$. Let $\mu$ be a positive Borel measure on $X$, finite on bounded sets. Let $\cC$ be the family of closed balls in $X$ and let $p$ be the premeasure defined on $\cC$ by \eqref{dfn_premeasure}. Denote by $\mu^{p,\ast}$ the metric outer measure obtained by Carath\'eodory's metric construction applied to $(\cC,p)$ and by $\mu^p$ the Borel measure associated with $\mu^{p,\ast}$. Then $\mu^p = \mu$.
\end{prop}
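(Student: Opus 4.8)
The inequality $\mu\le \mu^{p}$ is already provided by Lemma~\ref{lem_easy_inequality}, so the whole task reduces to proving the reverse inequality $\mu^{p,\ast}(A)\le \mu(A)$ for every Borel set $A$. When $\mu(A)=+\infty$ there is nothing to prove, so I would fix a Borel set $A$ with $\mu(A)<+\infty$ together with parameters $\delta>0$ and $\epsilon>0$, and aim at bounding $\mu^{p}_{\delta}(A)$ by $\mu(A)+c\,\epsilon$ for a constant $c$ independent of $\delta$. Since $\mu$ is finite on bounded sets, the outer approximation in Theorem~\ref{thm_approximation_Borel_measure} yields an open set $U\supset A$ with $\mu(U)<\mu(A)+\epsilon$. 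I would then let $\cF$ be the family of closed balls $B_{r}(a)$ with $a\in A$, $r<\delta/2$, and $B_{r}(a)\subset U$; since $U$ is open, every $a\in A$ is the center of arbitrarily small such balls, so $\cF$ satisfies the fine-cover hypothesis \eqref{hyp_fine_cover}.

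The bulk of $A$ is then captured by the Generalized Besicovitch Theorem~\ref{thm_generalized_besicovic_federer} applied to $\cF$, $\mu$ and $U$ (note $\mu(A)<+\infty$): it produces a countable disjoint subfamily $\cG\subset\cF$ with $\bigsqcup_{B\in\cG}B\subset U$ and $\mu\big(A\setminus\bigsqcup_{B\in\cG}B\big)=0$. Because the balls of $\cG$ are pairwise disjoint, of diameter $\le\delta$, and contained in $U$, their $p$-values add up to $\sum_{B\in\cG}\mu(B)=\mu\big(\bigsqcup_{B\in\cG}B\big)\le\mu(U)<\mu(A)+\epsilon$. This gives a tight (overlap-free) cover of $A$ up to the $\mu$-null remainder $N:=A\setminus\bigsqcup_{B\in\cG}B$.

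The main obstacle is that the Carath\'eodory infimum requires covering all of $A$, hence also the leftover set $N$, and covering a $\mu$-null set by balls need not have small total $p$-measure unless the overlaps are controlled. I would handle $N$ as follows: using Theorem~\ref{thm_approximation_Borel_measure} again, choose an open set $V\supset N$ with $\mu(V)<\epsilon$, and let $\cF'$ be the family of closed balls centered in $N$, of radius $<\delta/2$, and contained in $V$. Applying the Generalized Besicovitch Covering Lemma~\ref{thm_generalized_besicovic_federer_covering_lemma} (valid since $d$ is directionally limited at $X\supset N$, for $\delta$ small enough that $\delta/2<\xi/2$) yields $2\zeta+1$ countable disjoint subfamilies $\cG'_{1},\dots,\cG'_{2\zeta+1}$ of $\cF'$ covering $N$. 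As each $\cG'_{j}$ is disjoint and contained in $V$, one gets $\sum_{j}\sum_{B\in\cG'_{j}}\mu(B)\le(2\zeta+1)\mu(V)<(2\zeta+1)\epsilon$; here the overlap factor $2\zeta+1$ is harmless because it multiplies the small quantity $\mu(V)$.

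Finally I would combine the two collections: $\cG\cup\cG'_{1}\cup\dots\cup\cG'_{2\zeta+1}$ is a countable cover of $A$ by closed balls of diameter $\le\delta$, whence
\[
\mu^{p}_{\delta}(A)\le \sum_{B\in\cG}\mu(B)+\sum_{j=1}^{2\zeta+1}\sum_{B\in\cG'_{j}}\mu(B)<\mu(A)+(2\zeta+2)\epsilon.
\]
Since the right-hand side does not depend on $\delta$, letting $\epsilon\to0$ gives $\mu^{p}_{\delta}(A)\le\mu(A)$ for all small $\delta$, and then $\delta\to0$ yields $\mu^{p,\ast}(A)\le\mu(A)$. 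Together with Lemma~\ref{lem_easy_inequality} this proves $\mu^{p}=\mu$. I expect the only delicate points to be the verification of the fine-cover hypothesis for $\cF$ and $\cF'$ and the bookkeeping ensuring the chosen balls stay inside $U$ (resp.\ $V$); the conceptual crux is the split into a disjoint ``tight'' cover of the bulk and a bounded-overlap ``cheap'' cover of the null remainder.
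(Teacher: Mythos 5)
Your proof is correct, but it takes a genuinely different route from the paper's. The paper never has to cover a $\mu$-null remainder: in its Step two it applies the Generalized Besicovitch Theorem~\ref{thm_generalized_besicovic_federer} to the \emph{reconstructed} measure $\mu^p$ itself inside a fixed open set $U$, obtaining disjoint balls whose union exhausts $U$ up to a $\mu^p$-null set, and then runs an intersection argument over a sequence $\delta_n \to 0$ (setting $A = \bigcap_n \bigsqcup_{B \in \cG^{\delta_n}} B$) to conclude $\mu^p(U) \le \mu(U)$. The price of applying Besicovitch to $\mu^p$ is that the paper must first establish properties of $\mu^p$: Step one proves $\mu^p$ is finite on bounded sets (via the Covering Lemma~\ref{thm_generalized_besicovic_federer_covering_lemma}), and Step three invokes outer regularity of $\mu^p$ (Theorem~\ref{thm_approximation_Borel_measure}) to transfer the inequality from open sets to Borel sets. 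Your argument instead applies Besicovitch only to the given measure $\mu$ and bounds the Carath\'eodory infimum $\mu^p_\delta(A)$ directly for an arbitrary Borel set $A$ of finite measure: the disjoint bulk cover costs at most $\mu(U) < \mu(A) + \epsilon$, and the $\mu$-null leftover $N$ is covered cheaply by the Covering Lemma inside an open $V$ with $\mu(V) < \epsilon$, the overlap factor $2\zeta+1$ being harmless because it multiplies $\epsilon$. What your route buys is self-containedness: no finiteness or regularity property of $\mu^p$ is ever needed, so Steps one and three of the paper disappear, at the cost of the extra (standard, and correctly executed) null-remainder covering step; what the paper's route buys is that the leftover set is $\mu^p$-null by construction, so no second covering is required. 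Both proofs rely essentially on outer regularity of $\mu$ (hence on $\mu$ being finite on bounded sets) and on the directionally limited hypothesis, and your handling of the delicate points you flag — the fine-cover hypothesis for $\cF$, $\cF'$ and the containment of balls in $U$, $V$ — is sound.
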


\begin{proof} 
\textbf{Step one.} We first show that $\mu^p$ is a Borel measure finite on bounded sets. First we 
recall that $\mu^{p,\ast}$ is a metric outer measure by Theorem~\ref{thm_caratheodory_construction}, then thanks to Theorem \ref{thm_from_outer_measure_to_measure} $\mu^p$ is a Borel measure. Let us prove that $\mu^p$ is finite on bounded sets. Fix $A \subset X$ a bounded Borel set and apply Besicovitch Covering Lemma (Theorem~\ref{thm_generalized_besicovic_federer_covering_lemma}) with the family
\[
\cF_\delta = \left\lbrace B = B_r(x) \text{ closed ball} \: : \: x \in A \text{ and } \diam B \leq \delta \right\rbrace \: ,
\]
to get $2 \zeta + 1$ countable subfamilies $\cG_1^\delta \ldots , \cG_{2\zeta +1}^\delta$ of disjoint balls in $\cF$ such that
\[
A \subset \bigcup_{j=1}^{2\zeta +1} \bigsqcup_{B \in \cG_j^\delta} B \: .
\]
Therefore,
\[
\mu_\delta^p (A) \leq \sum_{j=1}^{2 \zeta + 1} \sum_{B \in \cG_j^\delta} p(B) \leq \sum_{j=1}^{2 \zeta +1} \mu \left( \bigsqcup_{B \in \cG_j^\delta} B \right) \leq (2\zeta + 1) \mu (A + B_\delta (0)) \leq (2\zeta + 1) \mu (A + B_1 (0)) \: ,
\]
where $A + B_1 (0) = \bigcup_{x \in A} B_1 (x)$ is bounded, thus $ \mu (A + B_1 (0)) < +\infty$ by assumption and hence for all $0 < \delta < 1$
\[
\mu_\delta^p (A) \leq (2\zeta + 1) \mu (A + B_1 (0)) < +\infty \: ,
\]
whence $\mu^{p,\ast}(A) < +\infty$. The claim is proved since $\mu^p = \mu^{p, \ast}$ on Borel sets.

\textbf{Step two.} We now prove that for any open set $U \subset X$, it holds $ \mu^p (U) \leq \mu (U)$. Let $\delta > 0$ be fixed. The collection of closed balls 
\[
\cC_\delta = \left\lbrace B_r (x) \: | \: x \in U, \: 0< 2r \leq \delta \right\rbrace \: .
\]  
satisfies assumption \eqref{hyp_fine_cover}. We can apply Theorem~\ref{thm_generalized_besicovic_federer} to $\mu^p$ and get a countable collection $\cG^\delta$ of disjoint balls in $\cC_\delta$ such that
\[
\bigsqcup_{B \in \cG^\delta} B \subset U \quad \textrm{and} \quad \mu^p (U) = \mu^p \left( \bigsqcup_{B \in \cG^\delta} B \right) \: .
\]
At the same time we have
\begin{equation} \label{proof_positive_measure_case_1}
\mu^p_\delta \left( \bigsqcup_{B \in \cG^\delta} B \right) \leq \sum_{B \in \cG^\delta } p (B) = \sum_{B \in \cG^\delta} \mu (B) = \mu \left( \bigsqcup_{B \in \cG^\delta} B \right) \leq \mu (U) \: .
\end{equation}
We fix any decreasing infinitesimal sequence $(\delta_n)_{n \in \N}$ and define
$\displaystyle A = \bigcap_{n \in \N} \left( \bigsqcup_{B \in \cG^{\delta_n}} B \right)$. We obtain $\displaystyle \mu^p (U) = \mu^p (A)$ and $\displaystyle A \subset \bigsqcup_{B \in \cG^{\delta_n}} B$ for any $n$. Thus, owing to \eqref{proof_positive_measure_case_1}, we have
\[
\mu^p_{\delta_n} (A) \leq \mu^p_{\delta_n} \left( \bigsqcup_{B \in \cG^{\delta_n}} B \right) \leq \mu (U) \quad \text{and then} \quad \mu^p (U) = \mu^p(A) \leq \mu (U) \: .
\]
This shows that $\mu^p (U) \leq \mu (U)$, as wanted.

\textbf{Step three.} Since $\mu$ and $\mu^p$ are Borel measures, finite on bounded sets, they are also outer regular (see Theorem~\ref{thm_approximation_Borel_measure}). Then for any Borel set $B \subset X$, and owing to Step two, it holds
\begin{align*}
\mu^p (B)& = \inf \{ \mu^p (U) \: | \: U \textrm{ open, } B \subset U \} \\
         & \leq \inf \{ \mu (U) \: | \: U \textrm{ open, } B \subset U \} = \mu (B) \: .
\end{align*}
Coupling this last inequality with Lemma~\ref{lem_easy_inequality} we obtain $\mu^p = \mu$.
\end{proof}

\subsection{Carath\'eodory's construction for a signed measure}

We recall that a Borel signed measure $\mu$ on $(X,d)$ is an extended real-valued set function $\mu : \cB (X) \rightarrow [-\infty,+\infty]$ such that $\mu (\emptyset)=0$ and, for any sequence of disjoint Borel sets $(A_k)_k$, one has
\begin{equation} \label{eq_countably_additive}
\sum_{k=1}^\infty \mu (A_k) = \mu \left( \bigcup_{k=1}^\infty A_k \right) \: .
\end{equation}

\begin{remk}
Notice that when $\displaystyle \mu \left( \bigcup_{k=1}^\infty A_k \right)$ is finite, its value does not depend on the arrangement of the $A_k$, therefore  the series on the right hand side of \eqref{eq_countably_additive} is commutatively convergent, thus absolutely convergent. In particular, if we write the Hahn decomposition $\mu = \mu^+ - \mu^-$, with $\mu^{+}$ and $\mu^{-}$ being two non-negative and mutually orthogonal measures, then $\mu^+(X)$ and $\mu^-(X)$ cannot be both $+\infty$.
\end{remk}

The question is now the following:
\begin{question}\it
Let $(X,d)$ be a metric space, separable and directionally $(\xi,\eta,\zeta)$--limited at $X$, and let $\mu$ be a Borel signed measure, finite on bounded sets. Is it possible to recover $\mu$ from its values on closed balls by some Carath\'eodory-type construction?
\end{question}

The main difference with the case of a positive measure is that $\mu$ is not monotone and thus the previous construction is not directly applicable. A simple idea could be to rely on the Hahn decomposition of $\mu$: indeed, 
$\mu^+$ and $\mu^-$ are positive Borel measures, and since one of them is finite, both are finite on bounded sets (recall that $\mu$ is finite on bounded sets by assumption). Once again, we cannot directly apply Carath\'eodory's construction to $\mu^+$ or $\mu^-$ since we cannot directly reconstruct $\mu^+ (B)$ and $\mu^- (B)$ simply knowing $\mu(B)$ for any closed ball $B$. We thus try to apply Carath\'eodory's construction not with $\mu^+ (B)$, but with $\left( \mu (B) \right)_+$, where $a_+$ (resp. $a_-$) denote the positive part $\max (a,0)$ (resp. the negative part $\max(-a,0)$) for any $a \in \R$. To be more precise, we state the following definition.

\begin{dfn} \label{caratheodory_construction_signed}
Let $\mu$ be a Borel signed Radon measure in $X$ and let $\cC$ be the family of closed balls in $X$. We define
\[
\begin{array}{lcrclclcrcl}
p_+ & : & \cC  & \longrightarrow & \R_+                                  & \text{and} &  p_- & : & \cC  & \longrightarrow & \R_+\\
    &   & B  &   \longmapsto     & \displaystyle \left( \mu(B) \right)_+ & & &   & B  &   \longmapsto     & \displaystyle \left( \mu(B) \right)_- \: .
\end{array}
\]
Then according to Carath\'eodory's construction, we define the metric outer measure $\mu^{p_+, \ast}$ such that for any $A \subset X$,
\[
\mu^{p_+, \ast}(A) = \lim_{\delta \to 0} \mu^{p_+, \ast}_\delta(A) = \lim_{\delta \to 0} \inf \left\lbrace \sum_{i=0}^\infty p_+ (A_i) \,\bigg| \, A \subset \bigcup_{i\in \N} A_i, \text{ and for all } i, \: A_i\in \cC,\ \diam(A_{i})\le \delta  \right\rbrace \: .
\]
Similarly we define $\mu^{p_-,\ast}$ and then call $\mu^{p_+}$ and $\mu^{p_-}$ the Borel measures associated with $\mu^{p_+,\ast}$ and $\mu^{p_-,\ast}$. Finally, we set $\mu^p = \mu^{p_+} - \mu^{p_-}$.
\end{dfn}

\begin{theo} \label{thm_signed_case}
Let $(X,d)$ be a metric space, separable and directionally $(\xi,\eta,\zeta)$--limited at $X$ and let $\mu = \mu^+ - \mu^-$ be a Borel signed measure on $X$, finite on bounded sets. Let $\mu^{p} = \mu^{p_+}-\mu^{p_-}$ be as in Definition~\ref{caratheodory_construction_signed}. Then $\mu^p = \mu$.
\end{theo}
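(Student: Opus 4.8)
The plan is to reduce the statement to the two separate identities $\mu^{p_+}=\mu^+$ and $\mu^{p_-}=\mu^-$, which by Definition~\ref{caratheodory_construction_signed} immediately give $\mu^p=\mu^{p_+}-\mu^{p_-}=\mu^+-\mu^-=\mu$. Since replacing $\mu$ by $-\mu$ interchanges $p_+$ with $p_-$ and $\mu^+$ with $\mu^-$, it suffices to treat $\mu^{p_+}=\mu^+$. As a preliminary I would record that $\mu^+$ and $\mu^-$ are both finite on bounded sets: by the Hahn decomposition one of $\mu^\pm(X)$ is finite, say $\mu^-(X)<+\infty$, so on a bounded Borel set $A$ one has $\mu^-(A)\le\mu^-(X)<+\infty$ and $\mu^+(A)=\mu(A)+\mu^-(A)<+\infty$. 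In particular Proposition~\ref{prop_positive_case} applies to the positive measure $\mu^+$. The inequality $\mu^{p_+}\le\mu^+$ is then the easy half: Method~II is monotone in the premeasure (if $p\le p'$ on $\cC$ then $\nu^p_\delta\le\nu^{p'}_\delta$ for every $\delta$, since every admissible sum only grows), and $p_+(B)=(\mu(B))_+\le\mu^+(B)$, so $\mu^{p_+,\ast}\le\nu^{p^+,\ast}$ where $p^+(B):=\mu^+(B)$; by Proposition~\ref{prop_positive_case}, $\nu^{p^+}=\mu^+$, whence $\mu^{p_+}\le\mu^+$ and $\mu^{p_+}$ is finite on bounded sets.

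The substance of the proof is the reverse inequality $\mu^{p_+}\ge\mu^+$, and the obstruction is that $p_+$ does \emph{not} dominate a fixed multiple of $\mu^+$: on a ball $B$ straddling the Hahn sets $P$ and $N$ one may have $(\mu(B))_+=0$ while $\mu^+(B)>0$, so the scale-free argument of Lemma~\ref{lem_easy_inequality} fails and mass can be lost. To repair this I would first establish the differentiation statement
\[
\lim_{r\to0}\frac{\mu^-(B_r(x))}{\mu^+(B_r(x))}=0\qquad\text{for }\mu^+\text{-a.e. }x,
\]
equivalently $(\mu(B_r(x)))_+/\mu^+(B_r(x))\to1$. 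This is not in the excerpt but follows from the Generalized Besicovitch Theorem~\ref{thm_generalized_besicovic_federer}: fix $t,R>0$, set $Z=\{x\in P\cap B_R(x_0):\limsup_r\mu^-(B_r(x))/\mu^+(B_r(x))>t\}$, and using outer regularity of $\mu^-$ (with $\mu^-(P)=0$) choose an open $U\supset Z$ with $\mu^-(U)$ small. Applying Theorem~\ref{thm_generalized_besicovic_federer} to $\mu^+$ and the fine family $\{B_r(x):x\in Z,\ \mu^-(B_r(x))>t\,\mu^+(B_r(x)),\ B_r(x)\subset U\}$ yields a disjoint subfamily $\cG$ with $\mu^+(Z\setminus\bigsqcup\cG)=0$, so that $\mu^+(Z)\le\sum_{\cG}\mu^+(B)<t^{-1}\sum_{\cG}\mu^-(B)\le t^{-1}\mu^-(U)$; letting $\mu^-(U)\to0$, then $t\downarrow0$ and $R\uparrow\infty$, forces the density to vanish $\mu^+$-a.e.

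With this in hand I would derive $\mu^{p_+}\ge\mu^+$ by an asymptotically sharp covering estimate. Fix $\epsilon>0$ and a bounded Borel $A$, and let $G_k=\{x\in P:\mu^-(B_r(x))\le\epsilon\,\mu^+(B_r(x))\text{ for all }0<r\le1/k\}$; the density result gives $G_k\uparrow$ a set of full $\mu^+$-measure in $A\cap P$, so it is enough to bound $\mu^{p_+}$ from below on $S:=A\cap G_k$ and then send $k\to\infty$, $\epsilon\to0$. Given any cover of $S$ by balls $B_i$ of diameter $\le\delta<1/k$, discard those missing $S$; each remaining $B_i$ contains some $x_i\in S$, hence $B_i\subset B_{2r_i}(x_i)$ with $\mu^-(B_i)\le\mu^-(B_{2r_i}(x_i))\le\epsilon\,\mu^+(B_{2r_i}(x_i))$. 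Keeping only the balls with $\mu(B_i)\ge0$ and using $(\mu(B_i))_+=\mu^+(B_i)-\mu^-(B_i)$ on those, the positive contribution reproduces $\mu^+(S)$ up to the $\mu^+$-mass of the part $S_-$ of $S$ reachable only by balls with $\mu(B_i)<0$. \textbf{The main obstacle is exactly here:} such ``negative'' balls cover pieces of $S$ at \emph{zero} cost, and being off-centre they may carry little $\mu^+$-mass, so one must show that $S_-$ is $\mu^+$-negligible in the limit. Each negative $B_i$ satisfies $\mu^+(B_i)\le\mu^-(B_i)\le\epsilon\,\mu^+(B_{2r_i}(x_i))$, i.e.\ it captures only an $\epsilon$-fraction of the $\mu^+$-mass of its concentric double; selecting, via the Generalized Besicovitch Covering Lemma~\ref{thm_generalized_besicovic_federer_covering_lemma}, at most $2\zeta+1$ disjoint subfamilies that still reach every point of $S_-$, one controls $\mu^+(S_-)$ by a multiple of $\epsilon$ times the $\mu^+$-mass of a $\delta$-neighbourhood of $S$. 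Passing to the infimum over covers, then $\delta\to0$ and $\epsilon\to0$, gives $\mu^{p_+}(A)\ge\mu^+(A)$. Securing this last step with the \emph{sharp} constant $1$ (rather than merely $\mu^{p_+}\ge c\,\mu^+$) is the delicate point I expect to demand the most care, since it is precisely the absence of a doubling hypothesis that makes the control of off-centre, possibly overlapping covering balls nontrivial.
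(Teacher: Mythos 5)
The first half of your proposal (the reduction to $\mu^{p_+}=\mu^+$, the finiteness of $\mu^\pm$ on bounded sets, and the inequality $\mu^{p_+}\le\mu^+$ via monotonicity of Method II in the premeasure together with Proposition~\ref{prop_positive_case}) is correct and agrees with the paper. For the reverse inequality the paper takes a completely different, much shorter route than you do: for an arbitrary $\delta$-covering $(B_j)_j$ of a Borel set $A$ it asserts
\[
\sum_j\bigl(\mu(B_j)\bigr)_+\ \ge\ \sum_j\mu(B_j)\ \ge\ \mu\Bigl(\bigcup_jB_j\Bigr)\ \ge\ \mu(A)\,,
\]
concludes $\mu\le\mu^{p_+}$ as set functions on Borel sets, and only then localizes on the Hahn set: $\mu^+(A)=\mu(A\cap P)\le\mu^{p_+}(A\cap P)\le\mu^{p_+}(A)$. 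Your instinct not to trust such a scale-free argument is sound: the last two inequalities in that chain are monotonicity and countable subadditivity of the \emph{signed} measure $\mu$, and both fail once covering balls can swallow negative mass lying outside $A$. However, your own repair has a concrete gap at its final step. The Generalized Besicovitch Covering Lemma (Theorem~\ref{thm_generalized_besicovic_federer_covering_lemma}) applies only to families of balls \emph{centered} at the points of the set one wants to cover. Your ``negative'' balls $B_i$ are off-centre with respect to $S_-$, and the doubles $B_{2r_i}(x_i)$ are centered at your chosen points $x_i$, not at points of $S_-$; if you extract $2\zeta+1$ disjoint subfamilies of the doubles, the corresponding small balls $B_i$ no longer cover $S_-$, while for the full (adversarial, arbitrarily overlapping) covering nothing bounds $\sum_i\mu^+(B_{2r_i}(x_i))$. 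Hence the key estimate $\mu^+(S_-)\le C\epsilon\,\mu^+(\text{$\delta$-neighbourhood of }S)$ is unjustified; the same unbounded-overlap problem already affects the error term $\epsilon\sum_i\mu^+(B_{2r_i}(x_i))$ that you implicitly discard for the balls with $\mu(B_i)\ge0$.

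This is not a fixable defect of your strategy: the estimate you need is false, and in fact so is Theorem~\ref{thm_signed_case} itself in $\R^n$, $n\ge2$. Take $\mu^+=\cH^1$ restricted to $I=[0,1]\times\{0\}\subset\R^2$; set $\ell_m=2^{-m}m^{-2}$, $h_m=\sqrt{4^{-m}-\ell_m^2/4}$, and let $\mu^-$ have an atom of mass $2\ell_m$ at each point $(k2^{-m},h_m)$, $k=0,\dots,2^m$, $m\ge2$. Then $\mu^-$ is finite (its total mass is $\sum_m O(m^{-2})$) and $\mu^+\perp\mu^-$. At scale $\delta_m=2^{1-m}$, the closed balls of radius $2^{-m}$ centered at $(i\ell_m,h_m)$, $i=0,\dots,\lceil\ell_m^{-1}\rceil$, cover $I$: each meets $I$ in a chord of length exactly $\ell_m$, and each contains at least one atom because the atoms at height $h_m$ are $2^{-m}$-spaced; thus $\mu(B)\le\ell_m-2\ell_m<0$, i.e.\ $p_+(B)=0$, for every ball of this covering. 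Therefore $\mu^{p_+,\ast}_{\delta_m}(I)=0$ for every $m$, whence $\mu^{p_+}(I)=0<1=\mu^+(I)$ --- even though the centered ratios $\mu^-(B_r(x))/\mu^+(B_r(x))$ tend to $0$ at every $x\in I$, so your differentiation lemma (which is true, and correctly proved via Theorem~\ref{thm_generalized_besicovic_federer}) cannot detect the phenomenon: off-centre balls can be nearly disjoint from $\mu^+$ while sharing the same atoms of $\mu^-$. The example also exhibits the failure of the paper's displayed chain, since there $\sum_j\mu(B_j)\le-1<\mu(\bigcup_jB_j)<\mu(I)$. In short: you correctly located the critical difficulty, your proposed resolution by Besicovitch does not go through, and no resolution exists --- the step where you expected ``the most care'' is exactly where the statement itself breaks down for non-centered coverings in dimension at least $2$.
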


\begin{proof} 
We observe that $\mu^{p_+}$ and $\mu^{p_-}$ are Borel measures: indeed, by construction they are metric outer measures and Carath\'eodory criterion implies that they are Borel measures. Furthermore, for any closed ball $B \in \cC$, if we set $p(\mu^+) (B) = \mu^+ (B)$ (note that $p(\mu^+)$ is the canonical premeasure associated with $\mu^+$ while $p_+$ is not a priori associated with any measure) then
\[
p_+ (B) = \left( \mu (B) \right)_+ \leq \mu^+ (B) = p(\mu^+) (B) \: ,
\]
thus by construction, $\mu^{p_+,\ast} \leq \mu^{p(\mu^+), \ast}$ and then one gets $\mu^{p_+} \leq \mu^{p(\mu^+)}$; similarly one can show that $\mu^{p_-} \leq \mu^-$. Thanks to Proposition~\ref{prop_positive_case}, we have proven that
\begin{equation} \label{eq_first_ineq_signed_case}
\mu^{p_+} \leq \mu^+ \quad \text{and} \quad \mu^{p_-} \leq \mu^- \: .
\end{equation}
In particular, $\mu^{p_+}$ and $\mu^{p_-}$ are finite on bounded sets, as it happens for $\mu^+$ and $\mu^-$.

Let now $A \subset X$ be a Borel set. It remains to prove that $\mu^{p_+} (A) = \mu^{p_+,\ast} (A) \geq \mu^+ (A)$ (and the same for $\mu^{p_-}$). We argue exactly as in the proof of Lemma~\ref{lem_easy_inequality}. Let $\delta >0$, then for any $\eta > 0$ there exists a countable collection of closed balls $( B_j^\eta )_{j \in \N} \subset \cC_\delta$ such that $A \subset \bigcup_j B_j^\eta$ and $\displaystyle \mu^{p_+,\ast}_\delta (A) \geq \sum_{j=1}^\infty p_+ (B_j^\eta) - \eta$, so that
\[
\mu^{p_+,\ast}_\delta (A) + \eta \geq \sum_{j=1}^\infty p_+ (B_j^\eta) = \sum_{j=1}^\infty \left( \mu (B_j^\eta) \right)_+ \geq \sum_{j=1}^\infty \mu (B_j^\eta) \geq \mu \left( \bigcup_j B_j^\eta  \right) \geq \mu (A) \: .
\]
Letting $\eta \rightarrow 0$ and then $\delta \rightarrow 0$ gives
\begin{equation} \label{proof_signed_case_1}
\mu (A) \leq \mu^{p_+,\ast} (A) = \mu^{p_+} (A) \: .
\end{equation}
We recall that $\mu^+$ and $\mu^-$ are mutually singular, hence there exists a Borel set $P \subset X$ such that for any Borel set $A$
\[
\mu^+ (A) = \mu (P \cap A) \quad \textrm{and} \quad \mu^- (A) = \mu ( (X-P) \cap A ) \: .
\]
Thanks to \eqref{proof_signed_case_1} we already know that $\mu \leq \mu^{p_+}$, therefore we get $\mu^+ (A) = \mu (P \cap A) \leq \mu^{p_+} (P \cap A) \leq \mu^{p_+} (A)$ for any Borel set $A$. Thanks to \eqref{eq_first_ineq_signed_case}, we finally infer that $\mu^{p_+} = \mu^+$, $\mu^{p_-} = \mu^-$, i.e., that $\mu^p = \mu$.
\end{proof}

\begin{remk}
If $\mu$ is a vector-valued measure on $X$, with values in a finite vector space $E$, we can apply the same construction componentwise. \end{remk}

\section{Recovering measures from approximate values on balls}

We now want to reconstruct a measure $\mu$ (or an approximation of $\mu$) starting from approximate values on closed balls, given by a premeasure $q$ satisfying \eqref{eq_mean_premeasure}.
More precisely, we can now reformulate Question 2 in the context of directionally limited metric spaces.

\begin{question}\it
Let $(X,d)$ be a separable metric space, directionally $(\xi,\eta,\zeta)$--limited at $X$ and let $\mu$ be a positive Borel measure on $X$. Is it possible to reconstruct $\mu$ from $q$ up to multiplicative constants? Can the same be done when $\mu$ is a signed measure?
\end{question}

In section \ref{section:2.1} below we explain with a simple example involving a Dirac mass why Carath\'eodory's construction does not allow to recover $\mu$ from the premeasure $q$ defined in \eqref{varif-prem}. Then in section \ref{sect:packtype} we define a \emph{packing construction} of a measure, that is in some sense dual to Carath\'eodory Method II, and we show that in a directionally limited and separable metric space $(X,d)$, endowed with an asymptotically $(\a , \g)$-bounded measure $\nu$ (see \eqref{agbounded}) it produces a measure equivalent to the initial one. 

\subsection{Why Carath\'eodory's construction is not well-suited}
\label{section:2.1}
Let us consider a Dirac mass $\mu = \delta_x$ in $\R^n$ and define
\[
q (B_r (y)) = \frac{1}{r} \int_{s=0}^r \mu (B_s (y)) \, ds\,.
\]
It is easy to check that this particular choice of premeasure $q$ satisfies \eqref{eq_mean_premeasure}. 
First of all, for any $r > 0$,
\[
q (B_r(x)) = \frac{1}{r} \int_{s=0}^r \delta_x (B_s (x)) \, ds = \frac{1}{r} \int_{s=0}^r 1 \, ds = 1 \: .
\]
If now $y$ is at distance $\eta$ from $x$ for some $0<\eta<r$, we have
\[
q (B_r (y)) = \frac{1}{r} \int_{s=0}^r \delta_x (B_s (y)) \, ds = \frac{1}{r} \int_{s=\eta}^r 1 \, ds = \frac{r-\eta}{r} \: .
\]
Therefore, $q(B_{r}(y)) \to 0$ as $d(x,y) = \eta \to r$. We can thus find a covering made by a single ball of radius less than $r$ for which $\mu_r^q ( \{x\} )$ is as small as we wish. This shows that Carath\'eodory's construction associated with this premeasure produces the zero measure.
\begin{center}
\setcounter{subfigure}{0}
\begin{figure}[!htp]
\subfigure[Bad covering of a Dirac mass]{\includegraphics[width=0.25\textwidth]{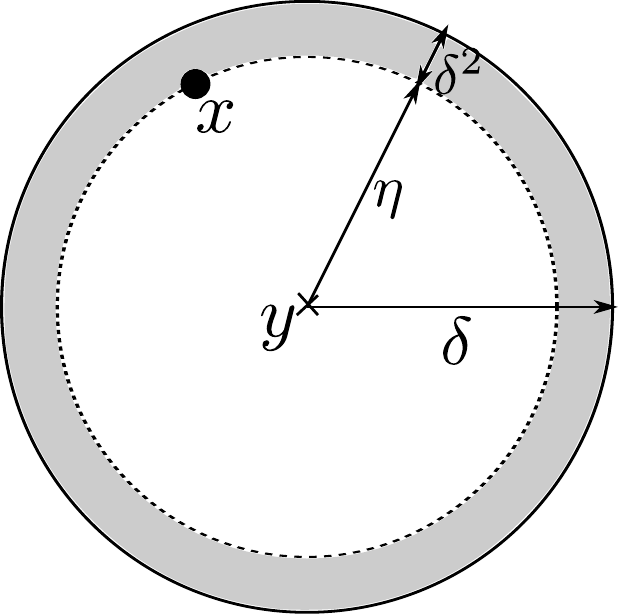}}
\quad
\subfigure[Bad covering of a curve]{\includegraphics[width=0.60\textwidth]{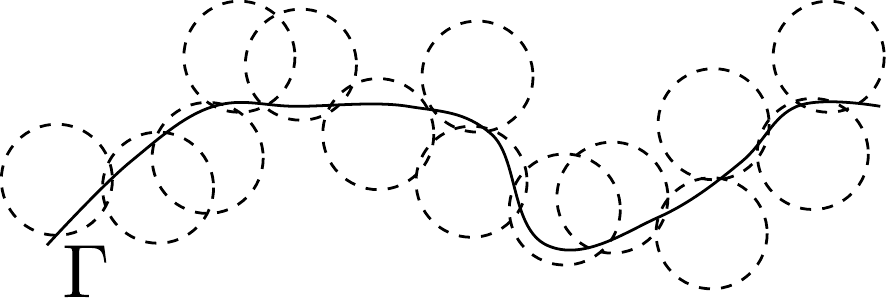}}
\end{figure}
\end{center}
More generally, as soon as it is possible to cover with small balls such that the mass of the measure inside each ball is close to the boundary of the ball, one sees that Carath\'eodory's construction ``looses mass''. For instance, take $\mu = \cH^1_{| \Gamma}$, where $\Gamma\subset \R^{n}$ is a curve of length $L_\Gamma$ and $\cH^{1}$ is the $1$-dimensional Hausdorff measure in $\R^{n}$, then cover $\Gamma$ with a family of closed balls $\cB_\delta$ of radii $\delta$ with centers at distance $\eta$ from $\Gamma$. Assuming that no portion of the curve is covered more than twice, then
\begin{align*}
\sum_{B \in \cB_\delta} q(B)&  = \sum_k \frac{1}{\delta} \int_{s=0}^\delta \mu (B_s (x_k))  = \sum_k \frac{1}{\delta} \int_{s=\eta}^\delta \mu (B_s (x_k)) \, ds \\
& \leq \frac{\delta - \eta}{\delta} \sum_k \mu (B_\delta (x_k)) \\
& \leq 2 L_\Gamma \frac{\delta - \eta}{\delta} \xrightarrow[\delta \to 0]{} 0 \: ,
\end{align*}
with $\eta = \delta - \delta^2$ for instance.

The same phenomenon cannot be excluded by blindly centering balls on the support of the measure $\mu$. Indeed, take a line $\ell\subset \R^{2}$ with a Dirac mass placed on it at some $x\in \ell$, so that $\mu = \cH^1_{| \ell} + \delta_x$. Then, by centering the balls on the support of $\mu$, we may recover the Hausdorff measure restricted to $\ell$, but not the Dirac mass, for the same reason as before. 

We thus understand that the position of the balls should be optimized in order to avoid the problem. For this reason we consider an alternative method, based on a packing-type construction.

\begin{figure}[!htp]
\begin{center}
\includegraphics[width=0.50\textwidth]{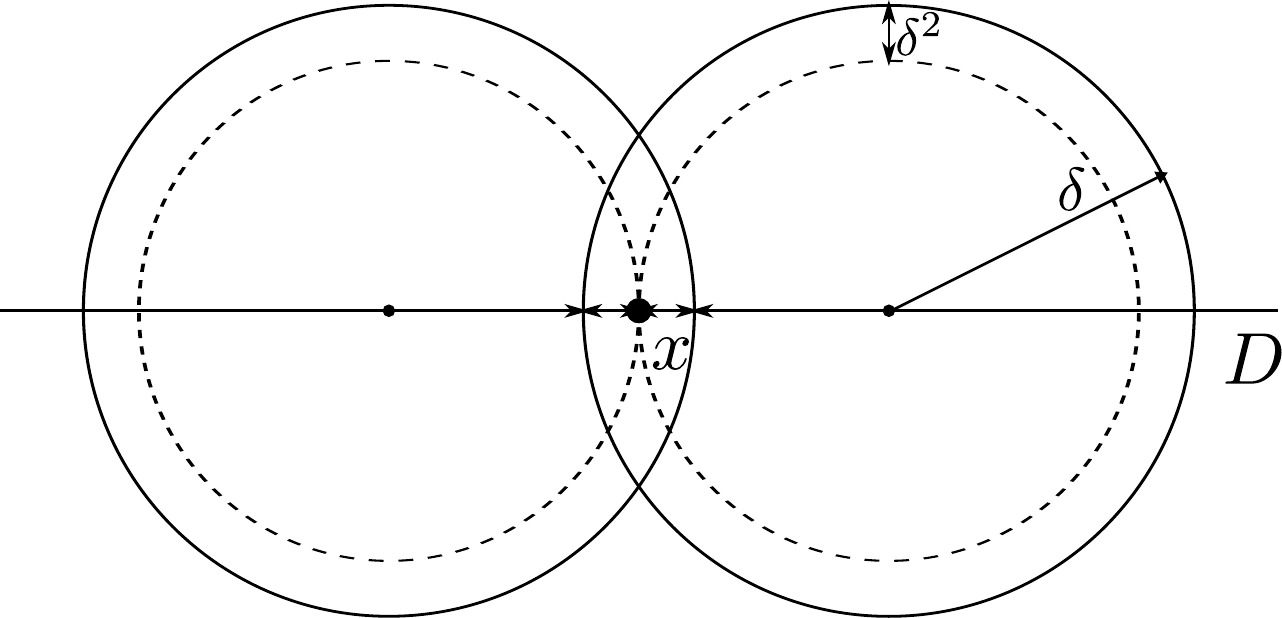}
\caption{Bad covering with balls centered on the support of the measure}
\end{center}
\end{figure}

\subsection{A packing-type construction}
\label{sect:packtype}

Taking into account the problems described in the examples of the previous section, one realizes the need to optimize the position of the centers of the balls in order to properly reconstruct the original measure $\mu$. The idea is to consider a kind of dual construction, that is, a supremum over packings rather than an infimum over coverings. To this aim we recall a notion of  \emph{packing} of balls.

\begin{dfn}[Packings] \label{def:packing}
Let $(X,d)$ be a separable metric space and $U \subset X$ be an open set. We say that $\cF$ is a packing of $U$ of order $\delta$ if $\cF$ is a countable family of disjoint closed balls whose radius is less than $\delta$ and such that
\[
\bigsqcup_{B \in \cF} B \subset U \: .
\]
\end{dfn}

\begin{dfn}[Packing construction of measures]  \label{dfn_packing_construction}
Let $(X,d)$ be a separable metric space and let $q$ be a premeasure defined on closed balls. Let $U \subset X$ be an open set and fix $\delta >0$. We set
\[
\hmu_\delta^{q} (U) := \sup \left\lbrace \sum_{B \in \cF} q (B) \, : \, \cF \textrm{ is a packing of order } \delta \textrm{ of } U \right\rbrace 
\]
and, in a similar way as in Carath\'eodory construction, we define
\[
\hmu^{q} (U) = \lim_{\delta \to 0} \hmu_{\delta}^{q} (U) = \inf_{\delta > 0} \hmu_{\delta}^{q} (U) 
\]
and note that $\delta^\prime \leq \delta$ implies $\hmu_{\delta^\prime}^{q} (U) \leq \hmu_\delta^{q} (U)$. Then, $\hmu^{q}$ can be extended to all $A \subset X$ by setting
\begin{equation} \label{eq:extensionHmuInf}
\hmu^{q}(A) = \inf \left\lbrace \hmu^{q} (U) \, : \, A \subset U, \, U \textrm{ open set} \right\rbrace \, .
\end{equation}
\end{dfn}
The main difference between Definition \ref{dfn_packing_construction} and Carath\'eodory's construction is that the set function $\hmu^{q}$ is not automatically an outer measure: it is monotone but not sub-additive in general.
In order to fix this problem we may apply the construction of outer measures, known as Munroe Method I, to the set function $\hmu^{q}$ restricted to the class of open sets. This amounts to setting, for any $A \subset X$,
\begin{equation}\label{mutilde}
\tilde{\mu}^{q} (A) = \inf \left\lbrace \sum_{n \in \N} \hmu^{q} (U_n) \, : \, A \subset \bigcup_{n \in \N} U_n, \, U_n \textrm{ open set } \right\rbrace \: .
\end{equation}
One can check that $\tilde{\mu}^{q}$ is an outer measure. 

We will prove in Theorem~\ref{thm_main} that, for the class of set functions $q$ we are focusing on, and under additional assumptions on$ X$, $\hmu^{q}$ is already a Borel outer measure. 

\begin{remk} \label{remk_sub_additivity_and_tilde_mu}
Knowing that $\hmu^{q}$ is sub-additive in the class of open sets is enough to show that $\hmu^{q} = \tilde{\mu}^{q}$.
Indeed, the inequality $\tilde{\mu}^{q} (A) \leq \hmu^{q} (A)$ comes directly from the fact that minimizing $\hmu^{q} (U)$ over $U$ open such that $A \subset U$ is a special case of minimizing $\sum_k \hmu^{q} (U_k)$ among countable families of open sets $U_k$ such that $A \subset \bigcup_k U_k$. Assuming in addition that $\hmu^{q}$ is sub-additive on open sets implies that for any countable family of open sets $(U_k)_k$ such that $A \subset \bigcup_k U_k$,
\[
\hmu^{q} (A) \leq \hmu^{q} (\bigcup_k U_k) \leq \sum_k \hmu^{q} (U_k)  \: .
\] 
By definition of $\tilde{\mu}^{q}$, taking the infimum over such families leads to $\hmu^{q} (A) \leq \tilde{\mu}^{q} (A)$.
\end{remk}

\begin{remk}
Our construction of the measure $\hmu^{q}$ is slightly different from the more classical packing construction proposed by Tricot and Taylor in \cite{tricot}. In particular, \eqref{eq:extensionHmuInf} enforces outer regularity of $\hmu^{q}$, while for instance the classical $s$--dimensional packing measure in $\R^n$ associated with the premeasure $q(B_r(x)) = (2r)^s$ is not outer regular for $s < n$ (see \cite{edgar}). 
A more specific comparison between our definition and the one by Tricot and Taylor will be carried out in section \ref{sect:BLversusTT}, where it will be proved that, under the assumption that $q(B) \le C \mu(B)$ for every ball $B\subset \R^{n}$ and for some constant $C>0$ and some Radon measure $\mu$, the two constructions actually produce the same measure.
\end{remk}

\begin{theo} \label{thm_main}
Let $(X,d)$ be a metric space satisfying Hypothesis \ref{hypo1}, let $\mu$ be a positive Borel measure on $X$ and let $q$ be a premeasure given on closed balls satisfying \eqref{eq_mean_premeasure}. Let $\hmu^{q}$ be as in Definition~\ref{dfn_packing_construction}. Then, the following hold:
\begin{enumerate}
\item for any Borel set $A \subset X$,
\begin{equation} \label{eq:generalControl}
\frac{1}{\g C} \mu (A) \leq  \hmu^{q} (A) \leq  C \inf \{ \mu (U) \: | \: A \subset U \text{ open set } \} \: ;
\end{equation}
where $\g$ and $C$ are the constants respectively appearing in Hypothesis \ref{hypo1} and \eqref{eq_mean_premeasure}.
\item $\hmu^q$ is countably sub-additive and is a metric outer measure;
\item if moreover $\mu$ is outer regular, then $\mu$ and the positive Borel measure associated with the outer measure $\hmu^{q}$ (still denoted as $\hmu^{q}$) are equivalent, and more precisely $\displaystyle \frac{1}{\g C} \mu \leq \hmu^q \leq C \mu$.
\end{enumerate}
\end{theo}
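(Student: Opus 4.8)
I would prove the three assertions in order, treating the first as the heart of the matter and deriving the other two from it together with the packing construction. The upper bound in \eqref{eq:generalControl} is elementary; the lower bound is where the directionally limited geometry and the asymptotically doubling measure $\nu$ are used; parts 2 and 3 are then essentially bookkeeping on top of \eqref{eq:generalControl}.

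\textbf{Part 1.} For the upper bound, fix an open set $U$ and a packing $\cF=\{B_{r_i}(x_i)\}$ of order $\delta<2r_0$. The balls are disjoint and contained in $U$, so by \eqref{eq_mean_premeasure}(ii),
\[
\sum_i q(B_{r_i}(x_i)) \le C\sum_i \mu(B_{r_i}(x_i)) = C\,\mu\Big(\bigsqcup_i B_{r_i}(x_i)\Big)\le C\mu(U).
\]
Taking the supremum over packings, then $\delta\to0$, gives $\hmu^{q}(U)\le C\mu(U)$, and the infimum over open $U\supset A$ in \eqref{eq:extensionHmuInf} yields the right-hand inequality of \eqref{eq:generalControl}. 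For the lower bound I would first reduce to open sets: since $\hmu^{q}(A)=\inf\{\hmu^{q}(U):A\subset U\text{ open}\}$ and $\mu(A)\le\mu(U)$, it suffices to prove $\hmu^{q}(U)\ge\frac1{\g C}\mu(U)$ for every open $U$, the case $\mu(U)=\infty$ being recovered by exhausting $U$ with bounded open subsets. Fixing $\delta>0$, the goal becomes to exhibit a packing of $U$ of order $\delta$ whose $q$-mass is at least $\frac1{\g C}\mu(U)$. This is where the geometry enters: by the Besicovitch covering lemma for directionally limited spaces combined with the asymptotically doubling measure $\nu$ — precisely Corollary \ref{bescivoic_with_doubling_balls}, itself resting on the diffusion estimate of Proposition \ref{prop_diffusion_in_X} — one selects a packing $\cF=\{B_{r_i}(x_i)\}$ of $U$ of order $\delta$ whose $\a$-shrunk cores recover a definite fraction of the mass, $\mu(\bigsqcup_i B_{\a r_i}(x_i))\ge \frac1\g\,\mu(U)$. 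Then \eqref{eq_mean_premeasure}(i) gives $\sum_i q(B_{r_i}(x_i))\ge C^{-1}\mu(\bigsqcup_i B_{\a r_i}(x_i))\ge\frac1{\g C}\mu(U)$, so $\hmu^{q}_\delta(U)\ge\frac1{\g C}\mu(U)$ for every $\delta$, hence $\hmu^{q}(U)\ge\frac1{\g C}\mu(U)$. I expect this to be the main obstacle: shrinking the disjoint balls of a packing by the factor $\a$ loses mass, and controlling this loss by the single constant $\g$ is exactly what forces both the directionally limited hypothesis (to run the Besicovitch selection) and the asymptotic doubling of $\nu$ (to bound the loss).

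\textbf{Part 2.} By Remark \ref{remk_sub_additivity_and_tilde_mu} it suffices to prove that $\hmu^{q}$ is countably subadditive on open sets; monotonicity and $\hmu^{q}(\emptyset)=0$ are immediate, and subadditivity on arbitrary sets then follows from \eqref{eq:extensionHmuInf} by covering each $A_k$ with an open $U_k$ satisfying $\hmu^{q}(U_k)\le\hmu^{q}(A_k)+\e2^{-k}$. So let $U=\bigcup_n U_n$ with the $U_n$ open; we may assume $\sum_n\hmu^{q}(U_n)<\infty$, whence by the lower bound of Part 1 also $\sum_n\mu(U_n)<\infty$, and in particular $\mu(U)<\infty$. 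Fix $\delta<2r_0$ and a packing $\cF$ of $U$ of order $\delta$. I would assign each $B_r(x)\in\cF$ to the least index $n$ with $\dist(x,X\setminus U_n)>\delta$ (so that $B_r(x)\subset U_n$); the unassigned balls have centers in $S_\delta=\{x\in U:\dist(x,X\setminus U_n)\le\delta\ \forall n\}$. The assigned balls form, for each $n$, a packing $\cF_n$ of $U_n$ of order $\delta$, while the unassigned ones lie in $N_\delta(S_\delta)\cap U$ (the $\delta$-neighbourhood of $S_\delta$ intersected with $U$), whose $q$-mass is bounded via \eqref{eq_mean_premeasure}(ii) by $C\mu(N_\delta(S_\delta)\cap U)=:\eta_\delta$, independently of $\cF$. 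Hence $\hmu^{q}_\delta(U)\le\sum_n\hmu^{q}_\delta(U_n)+\eta_\delta$. Now $S_\delta\downarrow\emptyset$ and $N_\delta(S_\delta)\cap U\downarrow\emptyset$ as $\delta\to0$ (any common point would lie in $X\setminus U$), so continuity from above gives $\eta_\delta\to0$; and since $\hmu^{q}_{\delta_0}(U_n)\le C\mu(U_n)$ is summable, dominated convergence for series yields $\lim_{\delta\to0}\sum_n\hmu^{q}_\delta(U_n)=\sum_n\hmu^{q}(U_n)$. Letting $\delta\to0$ proves $\hmu^{q}(U)\le\sum_n\hmu^{q}(U_n)$. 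Finally $\hmu^{q}$ is a metric outer measure: if $d(E,F)>0$, choose open $U\supset E$, $V\supset F$ with $d(U,V)>0$; for any open $W\supset E\cup F$ the packings of $W\cap U$ and of $W\cap V$ concatenate into a packing of $W$ (the two regions being positively separated), so $\hmu^{q}(W)\ge\hmu^{q}(W\cap U)+\hmu^{q}(W\cap V)\ge\hmu^{q}(E)+\hmu^{q}(F)$, and the infimum over $W$ gives superadditivity, which with subadditivity yields $\hmu^{q}(E\cup F)=\hmu^{q}(E)+\hmu^{q}(F)$.

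\textbf{Part 3.} Since $\hmu^{q}$ is a metric outer measure by Part 2, Carath\'eodory's criterion makes every Borel set $\hmu^{q}$-measurable, so $\hmu^{q}$ restricts to a Borel measure. If $\mu$ is outer regular, then $\inf\{\mu(U):A\subset U\text{ open}\}=\mu(A)$ for every Borel set $A$, so the right-hand side of \eqref{eq:generalControl} becomes $C\mu(A)$; combined with the lower bound of Part 1 this is precisely $\frac1{\g C}\mu\le\hmu^{q}\le C\mu$, i.e.\ $\mu$ and $\hmu^{q}$ are equivalent.
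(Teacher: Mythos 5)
Your overall architecture is essentially the paper's: the finite-measure case of the lower bound in \eqref{eq:generalControl} rests on Corollary \ref{bescivoic_with_doubling_balls} (built on Proposition \ref{prop_diffusion_in_X}), exactly as in the paper's Proposition \ref{prop_equivalence_on_open_sets}; your Part 2 uses the same dichotomy as Proposition \ref{cor_countable_sub_additvity} (either $\sum_n\mu(U_n)=+\infty$, which is handled by the lower bound, or $\sum_n\mu(U_n)<+\infty$, which is handled by subadditivity on open sets); and Part 3 is the same bookkeeping. Within this common scheme, your proof of subadditivity on open sets is a legitimate variant of the paper's Proposition \ref{prop_sub_additivity_open_case}: you assign each ball of a packing to the first $U_n$ containing it with margin $\delta$ and absorb the unassigned balls into a layer $N_\delta(S_\delta)\cap U$ of vanishing $\mu$-mass, whereas the paper works with the inner sets $U_k^\epsilon$ and the vanishing mass of $\bigcup_k U_k\setminus\bigcup_k U_k^{2\epsilon}$; both hinge on $\mu\bigl(\bigcup_n U_n\bigr)<+\infty$ and on \eqref{eq_mean_premeasure}(ii). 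Your explicit verification of the metric property via positively separated open neighborhoods is also correct (the paper leaves this point implicit).

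The genuine gap is your treatment of the case $\mu(U)=+\infty$ in the lower bound, which you propose to settle ``by exhausting $U$ with bounded open subsets''. Theorem \ref{thm_main} does \emph{not} assume $\mu$ to be finite on bounded sets --- the paper stresses exactly this point --- so a bounded open subset of $U$ may itself have infinite $\mu$-measure, and your finite case needs $\mu(V)<+\infty$, not boundedness of $V$. For instance, take $X=\R^n$, $\nu=\cL^n$ (so Hypothesis \ref{hypo1} holds) and $\mu$ the counting measure on $\Q^n$: every nonempty open set has infinite $\mu$-measure, hence no exhaustion by finite-measure open sets exists and your reduction breaks down. This is not an ignorable corner case, because your Part 2 uses precisely the implication ``$\mu(U_n)=+\infty\Rightarrow\hmu^{q}(U_n)=+\infty$'' to pass from $\sum_n\hmu^{q}(U_n)<+\infty$ to $\sum_n\mu(U_n)<+\infty$; so the gap propagates to your proof of countable subadditivity. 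The fix is the paper's second case in Proposition \ref{prop_equivalence_on_open_sets}: apply Corollary \ref{bescivoic_with_doubling_balls} directly to $U$ to obtain $2\zeta+1$ countable families of disjoint doubling balls in $\cF_\delta^U$ covering $U$ up to a $\mu$-null set; at least one family $\cG^{j_0}$ satisfies $\mu\bigl(\bigsqcup_{B\in\cG^{j_0}}B\bigr)=+\infty$, and since each of its balls satisfies $q(B_r(x))\ge C^{-1}\mu(B_{\a r}(x))\ge \tfrac{1}{C(\g+\e_0)}\mu(B_r(x))$, this single packing forces $\hmu^{q}_\delta(U)=+\infty$. A smaller imprecision in the same part: the doubling balls produced by Proposition \ref{prop_diffusion_in_X} satisfy $\mu(B_r(x))\le(\g+\e_0)\mu(B_{\a r}(x))$ for arbitrary $\e_0>0$, not with the constant $\g$ itself, so your packing only yields $\mu\bigl(\bigsqcup_i B_{\a r_i}(x_i)\bigr)\ge\tfrac{1}{\g+\e_0}\mu(U)$; you must carry $\e_0$ through the estimate and let $\e_0\to0$ at the end, as the paper does.
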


We briefly sketch the proof of this result for the reader's convenience.
\begin{enumerate}
\item The first step (see Proposition \ref{prop_sub_additivity_open_case}) is to prove that $\hmu^q$ is countably sub-additive on any sequence of open sets $(U_n)_n$ such that $\sum_n \mu (U_n) < +\infty$. This property does not require Hypothesis $1$ and only relies on the upper bound \eqref{eq_mean_premeasure}--(ii) on $q$.
\item Then, assuming moreover that $\mu$ is finite on bounded sets, we prove in Proposition \ref{prop_sub_additivity_loc_finite_case} the countable sub-additivity of $\hmu^q$.
\item Now we show in Proposition \ref{prop_equivalence_on_open_sets} a crucial fact, i.e. that the finiteness of the measure $\mu$ can be dropped, thus we only require that $\mu$ is a positive Borel measure (not necessarily finite on bounded sets) and that $(X,d)$ satisfies Hypothesis~\ref{hypo1}. The heart of the proof is to use the lower bound \eqref{eq_mean_premeasure} (i) on $q$ and show that it can be transferred to $\hmu^q$, which gives \eqref{eq:generalControl}.
\item Up to this last step, we still do not know that $\hmu^q$ is countably sub-additive. This is proved in Proposition~\ref{cor_countable_sub_additvity}  and follows from the partial sub-additivity result of the first step and the lower bound in \eqref{eq:generalControl}. We stress that this last result does not require that $\mu$ is finite on bounded sets.
\end{enumerate}


\begin{prop} \label{prop_sub_additivity_open_case}
Let $(X,d)$ be a separable metric space and let $\mu$ be a positive Borel measure on $X$. Let $q$ be a premeasure defined on the class $\mathcal C$ of closed balls contained in $X$, such that \eqref{eq_mean_premeasure} (ii) holds. Then, for any countable family of open sets $(U_k)_k \subset X$ satisfying $\sum_{k\in \N} \mu (U_k) < +\infty$, one has
\begin{equation}\label{subadd1}
\hmu^{q} \left( \bigcup_{k \in \N} U_k \right) \leq \sum_{k \in \N} \hmu^{q}  (U_k)\,.
\end{equation}
In particular, if $\mu$ is finite, then $\hmu^{q}$ is an outer measure. 
\end{prop}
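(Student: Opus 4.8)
The plan is to fix, for each $\delta>0$, an arbitrary packing $\cF$ of $V:=\bigcup_{k}U_k$ of order $\delta$ and to redistribute its balls among the sets $U_k$. I would split $\cF$ into \emph{good} balls, those $B$ with $B\subset U_k$ for some $k$, and \emph{bad} balls, the rest. To each good ball I assign the smallest index $k(B)$ with $B\subset U_{k(B)}$; the subfamily $\cF_k:=\{B\in\cF\text{ good}:k(B)=k\}$ then consists of pairwise disjoint closed balls of radius $<\delta$ contained in $U_k$, so it is itself a packing of $U_k$ of order $\delta$ and $\sum_{B\in\cF_k}q(B)\le\hmu^q_\delta(U_k)$. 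Summing over $k$ gives $\sum_{B\in\cF\text{ good}}q(B)\le\sum_k\hmu^q_\delta(U_k)$. Along the way I would also record the elementary consequence of \eqref{eq_mean_premeasure}(ii): for any packing of an open set $W$, disjointness yields $\sum_B q(B)\le C\sum_B\mu(B)=C\mu(\bigsqcup_B B)\le C\mu(W)$, hence $\hmu^q_\delta(W)\le C\mu(W)$ for every $\delta$.

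The crux is to show that the bad balls carry a vanishing amount of $q$-mass as $\delta\to0$. By \eqref{eq_mean_premeasure}(ii) and disjointness, $\sum_{B\in\cF\text{ bad}}q(B)\le C\,\mu\big(\bigsqcup_{B\in\cF\text{ bad}}B\big)$. To localise the bad balls I introduce the function $\rho(x):=\sup_k d(x,X\setminus U_k)$, which is $1$-Lipschitz (a supremum of $1$-Lipschitz functions), hence Borel, and strictly positive on $V$ because each $U_k$ is open. If $B_r(x)$ is bad then $B_r(x)\not\subset U_k$ for every $k$, which forces $d(x,X\setminus U_k)\le r$ for all $k$ and thus $\rho(x)\le r<\delta$. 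Hence every bad ball is centred in the Borel set $A_\delta:=\{x\in V:\rho(x)<\delta\}$ and, having radius $<\delta$, is contained in the closed set $\hat A_\delta:=\{y:d(y,A_\delta)\le\delta\}$. The $1$-Lipschitz property of $\rho$ shows that a point $x\in V$ cannot belong to $\hat A_\delta$ once $\delta<\rho(x)/2$; therefore $\one_{\hat A_\delta\cap V}\to0$ pointwise on $V$ as $\delta\to0$, and since $\mu(V)\le\sum_k\mu(U_k)<\infty$ dominated convergence gives $\mu(\hat A_\delta\cap V)\to0$.

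Putting the two estimates together and taking the supremum over all packings $\cF$ of order $\delta$, I obtain $\hmu^q_\delta(V)\le\sum_k\hmu^q_\delta(U_k)+C\,\mu(\hat A_\delta\cap V)$. It then remains to let $\delta\to0$. The last term vanishes by the previous step. For the first term I would interchange the limit in $\delta$ with the sum over $k$: each $\hmu^q_\delta(U_k)$ decreases to $\hmu^q(U_k)$ and is dominated by $C\mu(U_k)$ with $\sum_k\mu(U_k)<\infty$, so splitting the series at a level $N$, letting $\delta\to0$ and then $N\to\infty$ yields $\limsup_{\delta\to0}\sum_k\hmu^q_\delta(U_k)\le\sum_k\hmu^q(U_k)$. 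Since $\hmu^q(V)=\lim_{\delta\to0}\hmu^q_\delta(V)$, this proves \eqref{subadd1}.

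Finally, for the ``in particular'' I would invoke Remark~\ref{remk_sub_additivity_and_tilde_mu}, which reduces the outer-measure property to sub-additivity of $\hmu^q$ on arbitrary countable families of open sets. Given open sets $(U_k)_k$ with $V=\bigcup_k U_k$ and $\mu$ finite, I would disjointify into the Borel sets $E_k:=U_k\setminus\bigcup_{j<k}U_j$, so that $\sum_k\mu(E_k)=\mu(V)\le\mu(X)<\infty$. Using outer regularity of the finite measure $\mu$ (Theorem~\ref{thm_approximation_Borel_measure}) together with the outer-regular definition \eqref{eq:extensionHmuInf} of $\hmu^q$, I would pick for each $k$ an open set $O_k\supset E_k$ with both $\mu(O_k)\le\mu(E_k)+\epsilon 2^{-k}$ and $\hmu^q(O_k)\le\hmu^q(E_k)+\epsilon 2^{-k}$. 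Then $\sum_k\mu(O_k)<\infty$, so \eqref{subadd1} applies to $(O_k)_k$; as $V\subset\bigcup_k O_k$ and $E_k\subset U_k$, monotonicity gives $\hmu^q(V)\le\sum_k\hmu^q(O_k)\le\sum_k\hmu^q(U_k)+\epsilon$, and letting $\epsilon\to0$ completes the proof. The genuinely delicate point throughout is the treatment of the straddling balls in the second paragraph: proving that their total $q$-mass is negligible is exactly where both the upper bound \eqref{eq_mean_premeasure}(ii) and the finiteness $\mu(V)<\infty$ are indispensable.
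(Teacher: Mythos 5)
Your proof is correct and is essentially the paper's own argument: your good/bad splitting of a packing of $V$, with the straddling balls confined to the thin region $\{x \in V : \rho(x) < 2\delta\}$ — which is exactly the paper's set $\bigcup_k U_k \setminus \bigcup_k U_k^{2\epsilon}$, since $\bigcup_k U_k^{\epsilon} = \{x \in V : \rho(x) > \epsilon\}$ — together with the estimate of that region's $\mu$-mass via \eqref{eq_mean_premeasure}(ii) and the dominated interchange of $\lim_{\delta \to 0}$ with $\sum_k$, reproduces the paper's three steps, merely organized with the single parameter $\delta$ in place of the paper's separate parameters $\epsilon$ and $\delta$ (so you bypass the intermediate identity \eqref{eqsupeps} but prove nothing structurally different). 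Your treatment of the ``in particular'' statement, via disjointification, outer regularity of the finite measure $\mu$, and Remark~\ref{remk_sub_additivity_and_tilde_mu}, is likewise the same argument the paper spells out in Proposition~\ref{prop_sub_additivity_loc_finite_case}.
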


\begin{proof} 
Let $( U_k )_k$ be a sequence of open subsets of $X$ such that $\sum_k \mu (U_k) < +\infty$. Let $\epsilon > 0$, then for all $k\in \N$ we define
\[
U_k^\epsilon = \left\lbrace x \in U_k \, : \, d(x , X - U_k) > \epsilon \right\rbrace \: .
\]
Fix $0 < \delta < \frac{\epsilon}{2}$. If $B$ is a closed ball such that $\diam B \leq 2 \delta$ and $B \subset \bigcup_k U_k^\epsilon$, then there exists $k_0$ such that $B \subset U_{k_0}$. Indeed, $B=B_\delta (x)$ and there exists $k_0$ such that $x \in U_{k_0}^\epsilon$ and thus
\[
B_\delta (x) \subset U_{k_0}^{\epsilon - \delta} \subset U_{k_0}^{\frac{\epsilon}{2}} \subset U_{k_0}\: .
\]
Of course the inclusion $B\subset U_{k_{0}}$ remains true for any closed ball $B$ with $\diam B \leq 2 \delta$.
\begin{figure}[!htp]
\begin{center}
\includegraphics[width=0.50\textwidth]{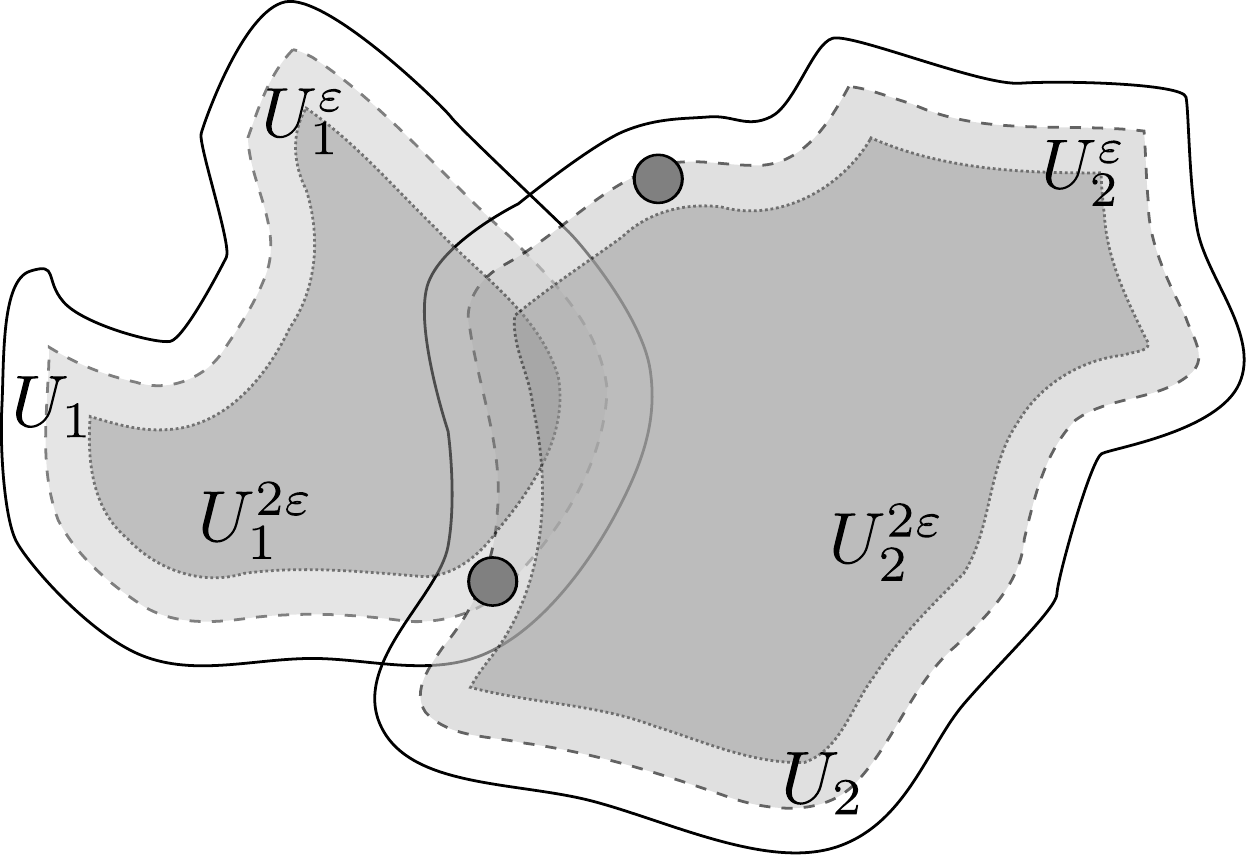}
\caption{Sub-additivity for packing construction}
\end{center}
\end{figure}
Therefore any packing $\cB$ of $\displaystyle \bigcup_k U_k^\epsilon$ of order $\delta$ can be decomposed as the union of a countable family of packings $\cB = \bigsqcup_k \cB_k $, where $\cB_k$ is a packing of $U_k$ of order $\delta$, whence
\[
\sum_{B \in \cB} q(B) = \sum_k \sum_{B \in \cB_k} q(B)\,.
\]
By taking the supremum over all such packings $\cB$ of $\displaystyle \bigcup_k U_k^\epsilon$, we get
\[
\hmu^q_\delta \left( \bigcup_k U_k^\epsilon \right) \leq \sum_k \hmu_\delta^q (U_k) \: .
\]
Then, taking the infimum over $\delta > 0$ and then the supremum over $\epsilon > 0$ gives
\begin{equation} \label{eq_sub_additivity_3}
\sup_{\epsilon > 0} \hmu^{q} \left( \bigcup_k U_k^\epsilon \right)  \leq \inf_{\delta > 0} \sum_{k \in \N} \hmu_\delta^q (U_k) \: .
\end{equation}
We now want to prove that
\begin{equation}\label{eqsupeps}
\sup_{\epsilon > 0} \hmu^{q} \left( \bigcup_{k \in \N} U_k^\epsilon \right) =  \hmu^{q} \left( \bigcup_{k \in \N} U_k \right)\,.
\end{equation}
Let $\cB$ be a packing of $\displaystyle \bigcup_k U_k$ of order $\delta < \frac{\epsilon}{2}$. We have
\begin{equation} \label{eq_sub_additivity_1}
\sum_{B \in \cB} q(B) = \sum_{\substack{B \in \cB \\  B \subset \bigcup_k U_k^\epsilon }} q(B) \: + \sum_{\substack{B \in \cB \\ B \not\subset \bigcup_k U_k^\epsilon }} q(B) \: .
\end{equation}
Notice that since $2 \delta < \epsilon$, for any $B \in \cB$, if $\displaystyle B \not\subset \bigcup_k U_k^\epsilon$ then $\displaystyle B \subset \bigcup_k U_k \setminus \bigcup_k U_k^{2 \epsilon}$. Since $q(B) \le C\mu (B)$ according to \eqref{eq_mean_premeasure} (ii), we get
\begin{equation} \label{eq_sub_additivity_2}
\sum_{\substack{B \in \cB \\ B \not\subset \bigcup_k U_k^\epsilon }} q(B) \leq C\sum_{\substack{B \in \cB \\ B \not\subset \bigcup_k U_k^\epsilon }} \mu(B) = C\mu \left( \bigsqcup_{\substack{B \in \cB \\ B \not\subset \bigcup_k U_k^\epsilon }} B \right) \leq C\mu \left( \bigcup_k U_k \setminus \bigcup_k U_k^{2 \epsilon} \right)\, .
\end{equation}
Owing to the fact that $\displaystyle \bigcup_k U_k = \bigcup_{\substack{\text{countable} \\ \epsilon > 0}} \bigcup_k U_k^{2 \epsilon}$, and $\bigcup_k U_k^{2 \epsilon}$ is decreasing in $\epsilon$, we have that
\begin{equation}\label{muvazero}
\mu \left( \bigcup_k U_k \setminus \bigcup_k U_k^{2 \epsilon} \right) \xrightarrow[\epsilon \to 0]{} 0 
\end{equation}
as soon as $\mu (\bigcup_k U_k) < +\infty$, which is true under the assumption $ \sum_k \mu (U_k) < +\infty$. Therefore, by \eqref{eq_sub_additivity_1}, \eqref{eq_sub_additivity_2} and \eqref{muvazero} we infer that
\begin{equation}\label{eqsubadd3}
\sum_{B \in \cB} q(B) \leq \sum_{\substack{B \in \cB \\  B \subset \bigcup_k U_k^\epsilon }} q(B)  + C \mu \left( \bigcup_k U_k \setminus \bigcup_k U_k^{2 \epsilon} \right) \: .
\end{equation}
Taking the supremum in \eqref{eqsubadd3} over all packings $\cB$ of order $\delta$ of $\bigcup_k U_k$, we get
\begin{align*}
\hmu_\delta^q \left( \bigcup_k U_k \right) & \leq \sup \left\lbrace \sum_{\substack{B \in \cB \\  B \subset \bigcup_k U_k^\epsilon }} q(B) \: : \: \cB \text{ is a packing of } \bigcup_k U_k \text{ order } \delta \right\rbrace  + C\mu \left( \bigcup_k U_k \setminus \bigcup_k U_k^{2 \epsilon} \right) \\
& \leq \hmu_\delta^q \left( \bigcup_k U_k^\epsilon \right) + C\mu \left( \bigcup_k U_k \setminus \bigcup_k U_k^{2 \epsilon} \right) \: .
\end{align*}
Then taking the limit as $\delta\to 0$ we obtain
\[
\hmu^{q} \left( \bigcup_k U_k \right) \leq \hmu^{q} \left( \bigcup_k U_k^\epsilon \right) + C\mu \left( \bigcup_k U_k \setminus \bigcup_k U_k^{2 \epsilon} \right) \: 
\]
and finally, letting $\epsilon\to 0$, we prove \eqref{eqsupeps}.

We now turn to the right hand side of \eqref{eq_sub_additivity_3}. For fixed $k$, $\hmu^q_\delta (U_k)$ is decreasing when $\delta \downarrow 0$, therefore
\begin{equation} \label{eq_sub_additivity_4}
\lim_{\delta \downarrow 0} \sum_k \hmu_\delta^q (U_k) = \sum_k \lim_{\delta \downarrow 0} \hmu_\delta^q (U_k) = \sum_k \hmu^q (U_k)
\end{equation}
provided that $ \sum_k \hmu_\delta^q (U_k)$ is finite for some $\delta > 0$, which is true since $q(B) \leq C\mu(B)$, $\hmu_\delta^q (U_k) \leq C\mu (U_k)$ for all $k$, so that $\sum_k \hmu_\delta^q (U_k) \leq C\sum_k \mu(U_k) < +\infty$. 
Finally, thanks to \eqref{eq_sub_additivity_3}, \eqref{eqsupeps} and \eqref{eq_sub_additivity_4} we obtain the conclusion.
\end{proof}

\begin{prop} \label{prop_sub_additivity_loc_finite_case}
Let $(X,d)$ be a separable metric space and let $\mu$ be a positive Borel measure on $X$, finite on bounded sets. Let $q$ be a premeasure defined on the class $\mathcal C$ of closed balls contained in $X$, such that \eqref{eq_mean_premeasure} (ii) holds. Then $\hmu^q$ is countably sub-additive, thus it is an outer measure.
\end{prop}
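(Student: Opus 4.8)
The plan is to deduce the countable sub-additivity from the already-established finite-measure case (Proposition \ref{prop_sub_additivity_open_case}) by a localization-plus-exhaustion argument, the only new ingredient being that $\mu$ is finite on bounded sets. By Remark \ref{remk_sub_additivity_and_tilde_mu} it suffices to show that $\hmu^q$ is sub-additive on open sets: once this is known, $\hmu^q=\tilde{\mu}^q$, and the latter is an outer measure. So I fix a countable family of open sets $(U_k)_k$, put $V=\bigcup_k U_k$, and aim at $\hmu^q(V)\le\sum_k\hmu^q(U_k)$; the case $\sum_k\hmu^q(U_k)=+\infty$ being trivial, I may assume the right-hand side is finite.

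First I would localize. Fix $x_0\in X$ and set $V^{(m)}=V\cap U_m(x_0)$, so that $V^{(m)}$ is open and bounded, increases to $V$, and $\mu(V^{(m)})<+\infty$ by local finiteness. The key observation is that any packing of an open subset of $V^{(m)}$ of order $\delta<1$ consists of balls contained in the bounded set $U_{m+1}(x_0)$, on which $\mu$ is finite; moreover, for such balls the upper bound \eqref{eq_mean_premeasure}(ii) reads $q\le C\mu$ with $\mu$ replaced by its (finite) restriction to $U_{m+1}(x_0)$. Thus computing $\hmu^q$ on open subsets of $V^{(m)}$ only involves the regime covered by the finite-measure version of Proposition \ref{prop_sub_additivity_open_case}, which therefore applies and tells us that $\hmu^q$ is countably sub-additive on open subfamilies of $V^{(m)}$. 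Applying this to $(U_k\cap V^{(m)})_k$, whose union is exactly $V^{(m)}$, and using monotonicity, I obtain
\[
\hmu^q(V^{(m)}) \le \sum_k \hmu^q(U_k\cap V^{(m)}) \le \sum_k \hmu^q(U_k),
\]
so that $\hmu^q(V^{(m)})\le\sum_k\hmu^q(U_k)$ uniformly in $m$.

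It remains to let $m\to\infty$, and this is the heart of the matter and the step I expect to be the main obstacle: it amounts to the continuity from below $\hmu^q(V)=\sup_m\hmu^q(V^{(m)})$ along $V^{(m)}\uparrow V$. The inequality $\hmu^q(V)\ge\sup_m\hmu^q(V^{(m)})$ is immediate by monotonicity, and at a fixed scale one checks easily, by truncating a packing of $V$ to a finite sub-packing (which is automatically contained in some $V^{(m)}$), that $\hmu^q_\delta(V)=\sup_m\hmu^q_\delta(V^{(m)})$. The genuine difficulty is that passing to $\hmu^q=\lim_{\delta\to0}\hmu^q_\delta$ forces an interchange of the decreasing limit in $\delta$ with the increasing supremum in $m$, and the inequality $\hmu^q_\delta\ge\hmu^q$ runs the wrong way; this is precisely where the finiteness of $\mu$ on bounded sets must be exploited, since no lower bound on $\hmu^q$ in terms of $\mu$ is available at this stage (Hypothesis~\ref{hypo1} is not assumed here). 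I would attempt to control the ``escaping'' part of a packing of $V$ — the balls lying outside $V^{(m)}$ — by slicing $V$ along concentric annuli $\{\,j<d(\cdot,x_0)<j+1\,\}$, grouping them by a fixed residue modulo a small integer so that same-group annuli are uniformly separated (hence the corresponding sub-packings add exactly and no ``collar'' term appears within a group), and estimating the mass carried by the separating annular regions — each of finite measure — through the upper bound \eqref{eq_mean_premeasure}(ii). The goal is an inequality $\hmu^q_\delta(V)\le\hmu^q_\delta(V^{(m)})+\big(\text{finite-measure collar error}\big)$ in which only $\delta\to0$ and then $m\to\infty$ need be taken. Since this continuity-from-below estimate is exactly where both the local finiteness of $\mu$ and the metric separation structure are genuinely needed, I expect it to be the most delicate and technically demanding part of the argument.
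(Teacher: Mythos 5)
Your strategy is right in outline, and your steps up to the crux are correct: the reduction via Remark~\ref{remk_sub_additivity_and_tilde_mu} to countable sub-additivity on open sets is legitimate; the localized bound $\hmu^{q}(V^{(m)})\le\sum_k\hmu^{q}(U_k)$ does follow from the final clause of Proposition~\ref{prop_sub_additivity_open_case} applied to the finite measure obtained by restricting $\mu$ to $U_{m+1}(x_0)$ and to $q$ truncated to balls contained in $U_{m+1}(x_0)$ (note, though, that this clause is essentially the finite-measure case of the very statement you are proving, so the reliance on it is heavier than it looks); and the fixed-scale identity $\hmu^{q}_{\delta}(V)=\sup_m\hmu^{q}_{\delta}(V^{(m)})$ is correct, since finite sub-packings are bounded and therefore lie in some $V^{(m)}$. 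But the proof then stops. The continuity from below $\hmu^{q}(V)=\sup_m\hmu^{q}(V^{(m)})$ --- the interchange of $\inf_{\delta}$ with $\sup_m$ --- is only announced as a goal, and the collar strategy you sketch cannot supply it: the localization is only needed when $\mu(V)=+\infty$, and in that regime $\mu(V\setminus V^{(m)})=+\infty$ for every $m$, so there is no ``finite-measure collar error'' tending to zero.

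In fact no argument can close this gap, because under \eqref{eq_mean_premeasure}(ii) alone the required continuity from below --- and with it the Proposition itself --- is false. Take $X=\R$, $\mu=\sum_{j\geq 1}\delta_j$ (Borel, finite on bounded sets), and define $q(B_r(x))=1$ if $r=1/j$ for some integer $j\geq 1$ with $|x-j|\leq r$, and $q(B_r(x))=0$ otherwise; then $q\leq\mu$ on every closed ball, so (ii) holds with $C=1$. A ball contained in $(-m,m)$ with $q$-value $1$ contains some integer $j\leq m-1$, hence has radius $1/j\geq 1/(m-1)$; consequently $\hmu^{q}_{\delta}((-m,m))=0$ as soon as $\delta\leq 1/m$, so $\hmu^{q}((-m,m))=0$ for every $m$. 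On the other hand, for every $\delta>0$ the balls $B_{1/j}(j)$ with $j>\max(2,1/\delta)$ are disjoint, have radius less than $\delta$ and $q$-value $1$, so $\hmu^{q}_{\delta}(\R)=+\infty$ for every $\delta$ and hence $\hmu^{q}(\R)=+\infty$, while $\sum_m\hmu^{q}((-m,m))=0$: countable sub-additivity fails. You should know that the paper's own proof breaks at precisely the point you identified: it first proves sub-additivity when $\mu\big(\bigcup_k A_k\big)<+\infty$ (by a different route from yours --- disjointification plus outer regularity, choosing open supersets that control $\mu$ and $\hmu^{q}$ simultaneously, which avoids your reduction to open sets), and then exhausts $X$ by bounded sets and concludes ``by monotone convergence''; that unproved monotone-convergence step is exactly the continuity from below refuted by the example above. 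So your diagnosis of the crux is accurate, but neither your proposal nor the paper's argument closes it, and it cannot be closed: the conclusion genuinely needs more than (ii), for instance the lower bound \eqref{eq_mean_premeasure}(i) together with Hypothesis~\ref{hypo1}, under which $\sum_n\mu(A_n)=+\infty$ forces $\sum_n\hmu^{q}(A_n)=+\infty$ and the problematic regime disappears (this is how Proposition~\ref{cor_countable_sub_additvity} proceeds).
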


\begin{proof}
Let $(A_k)_k$ be a countable family of disjoint sets such that $\mu \left( \bigsqcup_k A_k \right) < +\infty$. We shall prove that
\begin{equation} \label{eq_sub_additivity_6}
\hmu^{q} \left( \bigsqcup_k A_k \right) \leq \sum_k \hmu^{q} (A_k) \: .
\end{equation}
Let $\epsilon >0$. By outer regularity of $\mu$ (since $\mu$ a Borel measure finite on bounded sets, it is outer regular by Theorem~\ref{thm_approximation_Borel_measure}) and by definition of $\hmu^q$, let $\left( U_k \right)_k$ be a family of open sets such that, for any $k$,
\[
A_k \subset U_k \quad \textrm{and} \quad \mu (U_k ) \leq \mu (A_k) + \frac{1}{2^k} \quad \textrm{and} \quad \hmu^q (U_k ) \leq \hmu^q (A_k) + \frac{\epsilon}{2^k} \: .
\]
Hence $\sum \mu (U_k) \leq \sum \mu(A_k) + 1 < +\infty$ and by Proposition~\ref{prop_sub_additivity_open_case} we thus find
\[
\hmu^{q} \left( \bigsqcup_k A_k \right) \leq \hmu^{q} \left( \bigcup_k U_k \right) \leq \sum_k \hmu^{q} (U_k) \leq \sum_k \hmu^{q} (A_k) + \epsilon \: .
\]
Let $\epsilon \rightarrow 0$ to get \eqref{eq_sub_additivity_6}.

The case of a countable family $(A_k)_k$ such that $\mu \left( \bigcup_k A_k \right) < +\infty$ is obtained from the case of disjoint sets in the standard way, by defining $B_k \subset A_k$ as
\[
B_k = A_k - \bigcup_{i=1}^{k-1} A_i \: .
\]
The family $(B_k)_k$ is disjoint and $\displaystyle \bigsqcup_{k \in \N} B_k = \bigcup_{k \in \N} A_k$, hence by \eqref{eq_sub_additivity_6} one gets
\[
\hmu^{q} \left( \bigcup_k A_k \right) = \hmu^{q} \left( \bigsqcup_k B_k \right) \leq \sum_k \hmu^{q} (B_k) \leq \sum_k \hmu^{q} (A_k) \: .
\]
Finally, let us consider the general case of $(A_k)_k$ being any countable family of sets. Let $(X_n)_n$ be an increasing family of bounded sets such that $\cup_n X_n = X$, then for all $n$, $\cup_k ( A_k \cap X_n )$ is bounded, hence $\mu \left( \cup_k ( A_k \cap X_n ) \right) < +\infty$ and therefore
\[
\hmu^q \left( \bigcup_k ( A_k \cap X_n ) \right) \leq \sum_k \hmu^q \left( A_k \cap X_n \right) \: . 
\]
We let $n \rightarrow + \infty$ and we infer by monotone convergence that
\[
\hmu^q \left( \bigcup_k A_k \right) \leq \sum_k \hmu^q \left( A_k  \right) \: .
\]
\end{proof}


In order to have the countable sub-additivity of $\hmu^{q}$ (in the case where $\mu$ is not assumed to be finite on bounded sets) it is enough to show that $\sum_k \mu(U_k) = +\infty$ implies $\sum_k \hmu^{q} (U_k) = +\infty$. If this is true, then either $\sum_k \mu(U_k) < +\infty$ and the sub-additivity is given by Proposition~\ref{prop_sub_additivity_open_case}, or $\sum_k \hmu^{q} (U_k) = +\infty$ and the sub-additivity is immediate. Now the core of the problem is to estimate $\hmu^{q}$ from below by $\mu$. The main issue is that the lower bound \eqref{eq_mean_premeasure} (i) does not imply the stronger lower bound 
\[
q(B)\geq C^{-1} \mu (B)\,.
\]
Moreover, unless we know that the measure $\mu$ is doubling, there is no reason to expect that the inequality $\mu (2B) \le c\mu (B)$  is verified for some $c\ge 1$  (where by $2B$ we denote the ball concentric to $B$ with double radius) and for any ball $B$. Nevertheless, by assuming that $(X,d)$ is directionally bounded and by comparing $\mu$ with an asymptotically doubling measure $\nu$ on $X$, which we assume to exist, we are able to prove that a doubling property for $\mu$ actually holds for enough balls, so that we can choose packings among these balls. Before showing the result, we need to introduce the notion of \textit{asymptotically $(\a,\g)$-bounded measure}. We say that $\nu$ is an asymptotically $(\a,\g)$-bounded measure on $(X,d)$ if it is finite on bounded sets and strictly positive on any ball with positive radius, and for all $x\in X$ it satisfies
\begin{equation}\label{agbounded}
\limsup_{r\to 0^{+}} \frac{\nu(B_{r}(x))}{\nu(B_{\a r}(x))} \le \g \,.
\end{equation}

\begin{remk}
Notice that if we assume that $\nu$ is asymptotically doubling on $X$, that is,  $\nu$ is finite on bounded sets and there exists a constant $d \ge 1$ such that for all $x\in X$ it holds
\begin{equation*}
\limsup_{r\to 0^{+}} \frac{\nu(B_{2r}(x))}{\nu(B_{r}(x))} \le d \,,
\end{equation*}
then for any $\a\in (0,1]$, taking $Q$ as the unique integer such that $2^{-(Q+1)}< \a\le 2^{-Q}$, one can easily check that $\nu$ is asymptotically $(\a,d^{Q+1})$-bounded.
\end{remk}

We conveniently state some key properties on the metric space $(X,d)$, that will be constantly assumed in the rest of this section. 
\begin{hypothesis}\label{hypo1}
$(X,d)$ is a directionally limited metric space endowed with an asymptotically $(\a , \g)$-bounded measure $\nu$ satisfying \eqref{agbounded} for some constants $\alpha \in (0,1]$ and $\gamma \ge 1$.
\end{hypothesis}

\begin{prop} \label{prop_diffusion_in_X}
Assume that $(X,d,\nu)$ satisfy Hypothesis \eqref{hypo1} and let $\mu$ be a positive Borel measure on $X$. Let 
\[
A_0 = \left\lbrace x \in X \: : \: \liminf_{r \to 0^{+}} \frac{\mu (B_{r}(x))}{\nu (B_r(x))} = 0 \right\rbrace \quad \textrm{and} \quad A_+ = \left\lbrace x \in X \: : \: 0 < \liminf_{r \to 0^{+}} \frac{\mu (B_r(x))}{\nu (B_r(x))} \leq +\infty \right\rbrace \: .
\]
Then the following hold.
\begin{enumerate}[(i)]
\item For all $x \in A_+$, either $\mu (B_r(x)) = +\infty$ for all $r>0$, or
\[
\liminf_{r \to 0^{+}} \frac{\mu (B_{ r} (x))}{\mu (B_{\a r}(x))} \leq \gamma \: .
\]

\item $\mu (A_0) = 0$.
\end{enumerate}
In particular, for a fixed $\epsilon_0 >0$ and for $\mu$--almost any $x \in X$, there exists a decreasing infinitesimal sequence $(r_{n})_{n}$ of radii (depending on $\epsilon_0$ and $x$), such that
\begin{equation}\label{epsliminf}
\mu (B_{r_{n}} (x)) \le (\gamma+\epsilon_0) \, \mu(B_{\a r_{n}}(x))\,,\quad \forall\, n\in \N  \: .
\end{equation}
\end{prop}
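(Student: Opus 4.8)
The plan is to prove the two assertions (i) and (ii) separately and then derive the final ``in particular'' statement as a straightforward consequence. For part (i), I would fix $x \in A_+$ and assume $\mu(B_r(x)) < +\infty$ for some (hence, by monotonicity, for all small) $r > 0$, since the other alternative is explicitly allowed. The key idea is that the hypothesis $x \in A_+$ gives $\liminf_{r\to 0^+} \mu(B_r(x))/\nu(B_r(x)) = L$ with $0 < L \le +\infty$, which means the ratio is bounded below by some positive constant for all sufficiently small $r$. The asymptotic $(\a,\g)$-boundedness of $\nu$ at $x$, namely $\limsup_{r\to 0^+} \nu(B_r(x))/\nu(B_{\a r}(x)) \le \g$, will then be used to transfer a doubling-type control from $\nu$ to $\mu$. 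Concretely, I would write
\[
\frac{\mu(B_r(x))}{\mu(B_{\a r}(x))} = \frac{\mu(B_r(x))}{\nu(B_r(x))} \cdot \frac{\nu(B_r(x))}{\nu(B_{\a r}(x))} \cdot \frac{\nu(B_{\a r}(x))}{\mu(B_{\a r}(x))}\,,
\]
take the $\liminf$ as $r\to 0^+$, and control each factor: the middle factor has $\limsup \le \g$, while the outer two factors, being reciprocal ratios of $\mu/\nu$, need to be handled so that their combined contribution does not exceed $1$ in the limit. The delicate point is the interplay of $\liminf$ and $\limsup$ across the product, so I would choose a sequence $r_n \to 0$ realizing the relevant $\liminf$ to make the estimate rigorous.

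For part (ii), I would show $\mu(A_0) = 0$ by a covering argument. The natural approach is to fix a bounded Borel set and show its intersection with $A_0$ is $\mu$-null; since $X$ is separable and can be exhausted by bounded sets, this suffices. For each $x \in A_0$ one has $\liminf_{r\to 0^+}\mu(B_r(x))/\nu(B_r(x)) = 0$, so for any prescribed $t > 0$ there exist arbitrarily small radii $r$ with $\mu(B_r(x)) \le t\,\nu(B_r(x))$. These balls form a fine cover of $A_0$ in the sense of \eqref{hyp_fine_cover}, so I would invoke the Generalized Besicovitch Covering Lemma (Theorem \ref{thm_generalized_besicovic_federer_covering_lemma}), using that $(X,d)$ is directionally limited, to extract finitely many ($2\zeta+1$) countable subfamilies of disjoint balls covering $A_0$. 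Summing $\mu(B_r(x)) \le t\,\nu(B_r(x))$ over each disjoint subfamily and using that $\nu$ is finite on bounded sets gives $\mu(A_0 \cap X_0) \le (2\zeta+1)\, t\, \nu(\text{bounded neighborhood})$. Letting $t \to 0$ forces $\mu(A_0 \cap X_0) = 0$.

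Finally, the ``in particular'' conclusion follows by combining (i) and (ii): since $\mu(A_0)=0$, for $\mu$-a.e.\ $x$ we have $x \in A_+$, and discarding the (also $\mu$-null, or handled separately) set where $\mu(B_r(x)) = +\infty$, part (i) yields $\liminf_{r\to 0^+}\mu(B_r(x))/\mu(B_{\a r}(x)) \le \g$; by definition of $\liminf$, for any fixed $\epsilon_0 > 0$ there is a decreasing infinitesimal sequence $(r_n)_n$ along which the ratio stays below $\g + \epsilon_0$, which is exactly \eqref{epsliminf}. I expect the main obstacle to be part (i): correctly managing the $\liminf$/$\limsup$ bookkeeping in the product decomposition, and in particular ensuring that the case $L = +\infty$ (where $\mu/\nu$ blows up) is handled without the product estimate degenerating. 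A clean way around this is to select the subsequence of radii realizing the $\liminf$ of $\mu(B_r)/\mu(B_{\a r})$ first, and then bound $\nu(B_r)/\nu(B_{\a r})$ along it, rather than trying to manipulate all three factors simultaneously.
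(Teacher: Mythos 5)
Your part (ii) and the final ``in particular'' step match the paper's argument: cover $A_0$ by balls $B_r(a)$, $a\in A_0$, with $\mu(B_r(a)) \le t\,\nu(B_r(a))$, extract the $2\zeta+1$ disjoint Besicovitch subfamilies, sum, localize to bounded sets, and let $t\to 0$. (One small correction there: the set of points where $\mu(B_r(x)) = +\infty$ for all $r$ need \emph{not} be $\mu$-null, contrary to your parenthetical suggestion; it is handled because \eqref{epsliminf} holds trivially at such points, both sides being infinite -- this is exactly how the paper treats it.) The genuine gap is in part (i), which is where the real content of the proposition lies. Setting $f(r) = \mu(B_r(x))/\nu(B_r(x))$, your factorization reduces the claim to
\[
\liminf_{r \to 0^+} \frac{f(r)}{f(\a r)} \le 1\,,
\]
but you never prove this; you only remark that the outer factors ``need to be handled so that their combined contribution does not exceed $1$''. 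This inequality is not liminf/limsup bookkeeping: it is precisely the point where the hypothesis $x \in A_+$ (i.e.\ $\liminf_{r\to 0^+} f(r) > 0$) must be used. The paper proves it by contradiction via an iteration along a geometric sequence: if $f(r) \ge \beta f(\a r)$ for all $r \le r_0$ and some $\beta > 1$, then $f(\a^k r_0) \le \beta^{-k} f(r_0) \to 0$, contradicting $\liminf_{r\to 0^+} f(r) > 0$. Some form of this telescoping argument is unavoidable, in particular when $\liminf_{r\to 0^+} f(r) = +\infty$.

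Moreover, your proposed ``clean way around'' -- pick the subsequence realizing $\liminf_{r\to 0^+} \mu(B_r(x))/\mu(B_{\a r}(x))$ and bound $\nu(B_r(x))/\nu(B_{\a r}(x))$ along it -- is logically backwards and cannot close the argument. Along such a sequence $(r_n)$ you know $\mu(B_{r_n}(x))/\mu(B_{\a r_n}(x)) \to L'$ and $\limsup_n \nu(B_{r_n}(x))/\nu(B_{\a r_n}(x)) \le \g$, but to conclude $L' \le \g$ you would need an \emph{upper} bound on $f(r_n)/f(\a r_n)$ along that particular sequence, which you do not have; all this setup yields is the useless lower bound $\liminf_n f(r_n)/f(\a r_n) \ge L'/\g$. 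The correct order of quantifiers is the opposite one (and is the paper's): first establish $\liminf_{r\to 0^+} f(r)/f(\a r) \le 1$ by the iteration above, choose a sequence realizing \emph{that} liminf, and then apply \eqref{agbounded} along it, e.g.\ via the elementary inequality $\liminf_r (a_r b_r) \le (\limsup_r a_r)(\liminf_r b_r)$ for nonnegative quantities with finite right-hand side. (When $L = \liminf_{r\to 0^+} f(r)$ is finite there is a softer fix: choose the sequence realizing $\liminf f$, so that $f(r_n) \to L$ while $f(\a r_n) \ge L - \e$ eventually, giving the ratio bound directly; but this is not what you propose, and it genuinely fails when $L = +\infty$, the very case you flag as worrying.)
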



\begin{proof}

\textbf{Proof of (i).} Let $x \in A_+$. By monotonicity, either $\mu (B_r(x)) = +\infty$ for all $r>0$ (and then \eqref{epsliminf} is also trivially satisfied) or there exists some $R$ such that, for all $r \leq R$, $\mu (B_r (x)) < +\infty$. In this case the function defined by 
\[
\displaystyle f(r) = \frac{\mu(B_r(x))}{\nu(B_r(x))}
\]
is non-negative and finite for $r$ small enough. Moreover, since $x \in A_+$ we have $\liminf_{r \to 0} f(r) > 0$. Let us prove that
\begin{equation}\label{limsupf}
\liminf_{r \to 0} \frac{f( r)}{f(\a r)} \leq 1 \: .
\end{equation}
Assume by contradiction that $\displaystyle \liminf_{r \to 0} \frac{f( r)}{f(\a r)} > 1$, then necessarily $\a < 1$ and there exists $r_0 > 0$ and $\beta > 1$ such that for all $r \leq r_0$, $f( r) \geq \beta f(\a r)$. Consider now the sequence $(r_k)_k$ defined by $r_k = \a^{k} r_0$. Then $r_k \rightarrow 0$ and
\[
f(r_k) \leq \beta^{-1} f(\a^{-1} r_k) = \beta^{-1} f(r_{k-1}) \leq \beta^{-k} f(r_0) \xrightarrow[k \to \infty]{} 0
\]
which contradicts $\liminf_{r \to 0} f(r) > 0$ and thus proves \eqref{limsupf}. Let us notice that $\mu(B_{r}(x))>0$ for all $r>0$ since $x\in A_{+}$ and that, by definition, we have
\[
\frac{\mu(B_{ r}(x))}{\mu(B_{\a r}(x))} = \frac{f( r)}{f(\a r)} \cdot \frac{\nu(B_{ r}(x))}{\nu(B_{\a r}(x))}\,.
\]
Since $\nu$ is asymptotically $(\a, \g)$-bounded, by \eqref{agbounded} we get
\[
\liminf_{r \to 0^{+}} \frac{\mu(B_{ r}(x))}{\mu(B_{\a r}(x))} \leq  \limsup_{r \to 0^{+}} \frac{\nu(B_{ r}(x))}{\nu(B_{\a r}(x))} \liminf_{r \to 0^+} \frac{f(\a r)}{f(r)} \leq \gamma \: .
\]

\textbf{Proof of (ii).} Let us show that $\mu (A_0) = 0$. First assume that $\nu (X) < +\infty$ and let $\epsilon > 0$. Consider
\[
\cF_\epsilon = \left\lbrace B \subset X \: | \: B=B_r(a), a \in A_0 \text{ and } \mu(B) \leq \epsilon \nu (B) \right\rbrace \: .
\]
Let $a \in A_0$ be fixed. Since $\displaystyle \liminf_{r \to 0^{+}} \frac{\mu(B_r(a))}{\nu(B_r(a))} = 0$, there exists $r >0$ such that $B_r (a) \in \cF_\epsilon$. Every point in $A_0$ is the center of some ball in $\cF_\epsilon$, so that we can apply Theorem \ref{thm_generalized_besicovic_federer_covering_lemma} and obtain $2\zeta +1$ countable families $\cG_1, \ldots \cG_{2\zeta +1} $ of disjoint balls in $\cF_\epsilon$, such that
\[
A_0 \subset \bigcup_{j=1}^{2\zeta +1} \bigsqcup_{B \in \cG_j} B \: .
\]
Therefore
\[
\mu (A_0) \leq \sum_{j=1}^{2\zeta +1} \sum_{B \in \cG_j} \underbrace{\mu (B)}_{\leq \epsilon \nu (B)} \leq \epsilon \sum_{j=1}^{2\zeta +1} \nu \left( \bigsqcup_{B \in \cG_j} B \right) \leq \epsilon (2\zeta +1) \nu (X) \: .
\]
Hence $\mu (A_0) = 0$ if $\nu (X) < +\infty$. Otherwise, replace $X$ by $X \cap U_k (x_{0})$ for any fixed $x_{0}\in X$ to obtain that for any $k \in \N$, $\mu (A_0 \cap U_k (x_{0})) = 0$, then let $k\to\infty$ to conclude that $\mu (A_0) = 0$.

Finally \eqref{epsliminf} is an immediate consequence of the fact that $X = A_{0}\cup A_{+}$ coupled with (i) and (ii).
\end{proof}


\begin{corollary}[Besicovitch with doubling balls] \label{bescivoic_with_doubling_balls}
Assume that $(X,d,\nu)$ satisfy Hypothesis \eqref{hypo1} and let $\mu$ be a positive Borel measure on $X$. Fix $\epsilon_{0}>0$ and for any $\delta > 0$ define
\[
\cF_\delta = \left\lbrace B=B_{r}(x) \text{ closed ball } \subset X \: : \: \mu(B) \leq (\gamma+\epsilon_0) \mu (B_{\a r}(x)) \text{ and } \diam B \leq 2\delta \right\rbrace \: 
\]
and, for any $A \subset X$, 
\[
\cF_\delta^A = \{ B \in \cF_\delta \: : \: B=B_r (a),\ a \in A \}\: .
\] 
Then there exist $A_0 \subset X$ and $2\zeta +1$ countable subfamilies of $\cF_\delta^A$ of disjoint closed balls, $\cG_1, \ldots \cG_{2\zeta +1}$ such that
\[
A \subset A_0 \cup \bigcup_{j=1}^{2\zeta +1} \bigsqcup_{B \in \cG_j} B\quad \text{ and }\quad \mu (A_0) = 0 \: .
\]
Moreover, if $\mu (A) < +\infty$, then for any open set $U \subset X$ there exists a countable collection $\cG$ of disjoint balls in $\cF_\delta^A$ such that
\[
\bigsqcup_{B \in \cG} B \subset U \quad \text{and} \quad \mu \left( (A\cap U) \setminus \bigsqcup_{B \in \cG} B \right) = 0 \: .
\]
\end{corollary}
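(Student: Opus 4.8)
The plan is to reduce the statement to the two generalized Besicovitch results, Theorem~\ref{thm_generalized_besicovic_federer_covering_lemma} and Theorem~\ref{thm_generalized_besicovic_federer}, applied to a suitable fine family of balls furnished by Proposition~\ref{prop_diffusion_in_X}. All the measure-theoretic content is already packaged in \eqref{epsliminf}: for the fixed $\epsilon_0$ there is a $\mu$-null set $A_0 \subset X$ such that every $x \in X \setminus A_0$ is the center of a decreasing sequence of radii $r_n \to 0$ satisfying $\mu(B_{r_n}(x)) \le (\gamma+\epsilon_0)\mu(B_{\a r_n}(x))$. I would take this very $A_0$ as the exceptional set of the statement, so that $\mu(A_0)=0$ is immediate.

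For the first assertion I would work with the family $\cF_\delta^{A \setminus A_0}$ and observe that each $a \in A \setminus A_0$ is the center of some ball of it: choosing $n$ large enough that $r_n \le \min\{\delta, \xi/2\}$, the ball $B_{r_n}(a)$ satisfies $\diam B_{r_n}(a) \le 2 r_n \le 2\delta$ together with the doubling inequality defining $\cF_\delta$, hence belongs to $\cF_\delta^{A \setminus A_0}$. Restricting if necessary to balls of radius strictly below $\xi/2$ (which still cover every center), I would apply the Generalized Besicovitch Covering Lemma (Theorem~\ref{thm_generalized_besicovic_federer_covering_lemma}) to $A \setminus A_0$, obtaining $2\zeta+1$ countable subfamilies $\cG_1, \dots, \cG_{2\zeta+1}$ of disjoint balls in $\cF_\delta^{A\setminus A_0} \subset \cF_\delta^A$ covering $A \setminus A_0$. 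Since $A \subset A_0 \cup (A \setminus A_0)$, this gives $A \subset A_0 \cup \bigcup_j \bigsqcup_{B \in \cG_j} B$, as wanted.

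For the ``moreover'' part, assuming $\mu(A) < +\infty$, I would verify that $\cF_\delta^{A \setminus A_0}$ is a fine cover of $A \setminus A_0$ in the sense of \eqref{hyp_fine_cover}: for each $a \in A \setminus A_0$ the radii $r_n$ from \eqref{epsliminf} tend to $0$, so $\inf\{r>0 : B_r(a) \in \cF_\delta^{A \setminus A_0}\} = 0$. Since $\mu(A \setminus A_0) \le \mu(A) < +\infty$, the Generalized Besicovitch Theorem (Theorem~\ref{thm_generalized_besicovic_federer}) applies and produces, for any open $U$, a countable disjoint family $\cG \subset \cF_\delta^{A \setminus A_0} \subset \cF_\delta^A$ with $\bigsqcup_{B \in \cG} B \subset U$ and $\mu\big(((A \setminus A_0)\cap U) \setminus \bigsqcup_{B \in \cG} B\big) = 0$. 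Writing $A \cap U = ((A \setminus A_0)\cap U) \cup (A_0 \cap U)$ and using $\mu(A_0)=0$, I would conclude $\mu\big((A \cap U) \setminus \bigsqcup_{B \in \cG} B\big) = 0$.

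The essential difficulty is entirely concentrated in Proposition~\ref{prop_diffusion_in_X}, whose role is exactly to guarantee that the restricted family $\cF_\delta$ of ``doubling balls'' is still rich enough to form a fine cover of $\mu$-almost all of $X$. Once \eqref{epsliminf} is available, this corollary is essentially a transcription of Federer's covering results, the only care needed being the routine truncation of radii to respect the hypothesis $\delta < \xi/2$ of Theorem~\ref{thm_generalized_besicovic_federer_covering_lemma} and the harmless bookkeeping of the $\mu$-null set $A_0$.
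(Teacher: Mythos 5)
Your proof is correct and follows exactly the paper's argument: the paper likewise invokes \eqref{epsliminf} from Proposition~\ref{prop_diffusion_in_X} to conclude that $\mu$-almost every point is the center of arbitrarily small balls of $\cF_\delta$, and then applies Theorems~\ref{thm_generalized_besicovic_federer_covering_lemma} and~\ref{thm_generalized_besicovic_federer}. Your write-up is in fact more careful than the paper's (the explicit handling of the null set $A_0$ and the truncation of radii below $\xi/2$ are details the paper leaves implicit), but the route is the same.
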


\begin{proof}
Thanks to \eqref{epsliminf} (see Proposition~\ref{prop_diffusion_in_X}) we know that for $\mu$-almost every $x \in X$ there exists a decreasing infinitesimal sequence $(r_{n})_{n}$ such that $\displaystyle B_{r_{n}}(x) \in \cF_\delta$ for all $n\in \N$. Hence for $\mu$-almost any $x \in X$ we have
\[
\inf \left\lbrace r \: | \: B_r (x) \in \cF_\delta \right\rbrace = 0 \: .
\]
Then, the conclusion follows from Theorems \ref{thm_generalized_besicovic_federer_covering_lemma} and \ref{thm_generalized_besicovic_federer}.
\end{proof}

We can now prove that $\hmu^{q}$ and $\mu$ are equivalent on Borel sets (as set functions since we have not completely proved the sub-additivity of $\hmu^{q}$ yet).

\begin{prop} \label{prop_equivalence_on_open_sets}
Let $(X,d,\nu)$ satisfy Hypothesis \eqref{hypo1} with constants $(\a,\g)$ and let $\mu$ be a positive Borel measure on $X$. Let $q$ be a premeasure satisfying \eqref{eq_mean_premeasure} with constants $\a$ and $C$. Let $\hmu^{q}$ be as in Definition~\ref{dfn_packing_construction}. 
Then for any Borel set $A \subset X$ we have 
\[
\frac{1}{\gamma C} \mu (A) \leq  \hmu^{q} (A) \leq C \inf \{ \mu (U) \: | \: A \subset U \text{ open set } \} \, .
\]
Therefore if $\mu$ is outer regular then
\[
\frac{1}{\g C} \mu (A) \leq  \hmu^{q} (A) \leq  C\mu (A) \: .
\]
\end{prop}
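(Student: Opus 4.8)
The plan is to establish the two displayed inequalities for an arbitrary Borel set $A$ and then read off the ``therefore'' part from outer regularity of $\mu$. The upper bound is elementary. First I would note that for every open $U$ and every packing $\cF$ of $U$ of order $\delta<r_{0}$ the balls of $\cF$ are disjoint and contained in $U$, so by \eqref{eq_mean_premeasure}(ii),
\[
\sum_{B\in\cF}q(B)\le C\sum_{B\in\cF}\mu(B)=C\,\mu\Big(\bigsqcup_{B\in\cF}B\Big)\le C\mu(U).
\]
Taking the supremum over packings and then $\delta\to0$ gives $\hmu^{q}(U)\le C\mu(U)$, and since by \eqref{eq:extensionHmuInf} the extension of $\hmu^{q}$ is an infimum over open neighbourhoods, this yields at once $\hmu^{q}(A)\le C\inf\{\mu(U):A\subset U\text{ open}\}$.

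For the lower bound I would again exploit \eqref{eq:extensionHmuInf}: since $\hmu^{q}(A)=\inf\{\hmu^{q}(U):A\subset U\text{ open}\}$ and $\mu(U)\ge\mu(A)$, it is enough to prove $\hmu^{q}(U)\ge\frac1{\g C}\mu(U)$ for every open $U$. The key mechanism is the pointwise estimate on ``doubling'' balls: fixing $\epsilon_0>0$, Proposition~\ref{prop_diffusion_in_X} guarantees that for $\mu$-a.e.\ $x$ there are arbitrarily small radii $r$ with $\mu(B_{r}(x))\le(\g+\epsilon_0)\mu(B_{\a r}(x))$, and for such a ball \eqref{eq_mean_premeasure}(i) gives
\[
q(B_{r}(x))\ge C^{-1}\mu(B_{\a r}(x))\ge \frac{1}{C(\g+\epsilon_0)}\,\mu(B_{r}(x)),
\]
which transfers the lower bound from $q$ to $\mu$. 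When $\mu(U)<+\infty$ I would apply the second part of Corollary~\ref{bescivoic_with_doubling_balls} with $A=U$ to obtain, for each small $\delta$, a disjoint subfamily $\cG\subset\cF_{\delta}^{U}$ with $\bigsqcup_{B\in\cG}B\subset U$ and $\mu(U\setminus\bigsqcup_{B\in\cG}B)=0$, hence $\mu(\bigsqcup_{B\in\cG}B)=\mu(U)$. As $\cG$ is a packing of $U$ of order $\delta$, the estimate above gives
\[
\hmu^{q}_{\delta}(U)\ge\sum_{B\in\cG}q(B)\ge\frac{1}{C(\g+\epsilon_0)}\,\mu\Big(\bigsqcup_{B\in\cG}B\Big)=\frac{1}{C(\g+\epsilon_0)}\,\mu(U),
\]
and letting $\delta\to0$ and $\epsilon_0\to0$ yields $\hmu^{q}(U)\ge\frac1{\g C}\mu(U)$.

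The hard part will be the case $\mu(U)=+\infty$, because the almost-covering packing in Corollary~\ref{bescivoic_with_doubling_balls} is only available when $\mu(A)<+\infty$, while its covering part does not force the selected balls to lie inside $U$. To get around this I would apply the Besicovitch covering lemma (Theorem~\ref{thm_generalized_besicovic_federer_covering_lemma}) to the set $G\cap U$, where $G$ is the full-measure set of centres admitting arbitrarily small doubling balls, using the family of doubling balls of radius $<\delta$ that are \emph{contained} in $U$ (each point of $G\cap U$ is the centre of such a ball, since $U$ is open). This produces $2\zeta+1$ disjoint subfamilies $\cG_{1},\dots,\cG_{2\zeta+1}$ of doubling balls contained in $U$ that cover $G\cap U$. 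Since $\mu(G\cap U)=\mu(U)=+\infty$, subadditivity forces $\sum_{B\in\cG_{j_0}}\mu(B)=+\infty$ for some $j_0$; this $\cG_{j_0}$ is a packing of $U$, and the pointwise estimate gives $\sum_{B\in\cG_{j_0}}q(B)=+\infty$, so $\hmu^{q}(U)=+\infty$.

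Combining the two cases establishes $\hmu^{q}(U)\ge\frac1{\g C}\mu(U)$ for all open $U$, and hence $\hmu^{q}(A)\ge\frac1{\g C}\mu(A)$ for every Borel $A$ by the reduction above. Finally, if $\mu$ is outer regular then $\inf\{\mu(U):A\subset U\text{ open}\}=\mu(A)$, so the upper bound sharpens to $\hmu^{q}(A)\le C\mu(A)$ and we conclude $\frac1{\g C}\mu(A)\le\hmu^{q}(A)\le C\mu(A)$.
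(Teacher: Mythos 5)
Your proof is correct and follows essentially the same route as the paper's: the upper bound from \eqref{eq_mean_premeasure}(ii) and the definition of packings, and the lower bound via the doubling-ball estimate of Proposition~\ref{prop_diffusion_in_X}, split into the cases $\mu(U)<+\infty$ (Vitali-type part of Corollary~\ref{bescivoic_with_doubling_balls}) and $\mu(U)=+\infty$ (Besicovitch covering part, extracting a family $\cG_{j_0}$ of infinite total measure). Your explicit handling of the infinite case --- applying Theorem~\ref{thm_generalized_besicovic_federer_covering_lemma} to the doubling balls \emph{contained} in $U$ centered at the full-measure set $G\cap U$ --- is precisely what the paper does implicitly by defining $\cF_\delta^U$ with the constraint $B\subset U$ before invoking the corollary.
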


\begin{proof}
Let $U \subset X$ be an open set, then the inequality $\hmu^{q} (U) \leq C\mu (U)$ is just a consequence of the definition of $\hmu^{q}$ and of the second inequality in \eqref{eq_mean_premeasure}, i.e., of the fact that, for any closed ball $B$, $q(B) \leq C\mu(B)$. Now we prove the other inequality by splitting the problem into two cases. 

\textit{Case $\mu(U) < +\infty$.} Let $\epsilon_0 >0$ and $\delta > 0$, then we can apply Corollary~\ref{bescivoic_with_doubling_balls} (Besicovitch with doubling balls) to get a countable family $\cG_\delta$ of disjoint balls of 
\[
\cF_\delta^U =  \left\lbrace B=B_r(x) \subset U \: : \: \mu( B) \leq (\g + \epsilon_0) \mu (B_{\a r}(x)) \text{ and } \diam B \leq 2\delta \right\rbrace
\]
such that
\[
\mu(U)  = \mu \left( \bigsqcup_{B \in \cG_\delta} B \right) \quad \textrm{and} \quad  \bigsqcup_{B \in \cG_\delta} B \subset U \: .
\]
Therefore by \eqref{eq_mean_premeasure} and by definition of $\hmu_\delta^q $,
\begin{align*}
\hmu_\delta^q (U) \geq \sum_{B \in \cG_\delta} q(B) \geq &\sum_j \frac{1}{C} \mu ( B_{\a r_j} (x_j) ) \\
& \geq \frac{1}{C(\gamma+\epsilon_0)} \sum_j \mu (B_{r_j} (x_j)) = \frac{1}{C(\gamma+\epsilon_0)} \mu \left( \bigsqcup_{B \in \cG_\delta} B \right) = \frac{1}{C(\gamma+\epsilon_0)} \mu (U) \: .
\end{align*}
Letting $\delta\to 0$ and then $\epsilon_0 \rightarrow 0$ gives $\displaystyle \hmu^{q}(U) \geq \frac{1}{ \gamma C} \mu(U)$.

\textit{Case $\mu(U) = +\infty$.} Let $\delta > 0$ and $\cF_{\delta}^{U}$ be as in the previous case, then applying Corollary~\ref{bescivoic_with_doubling_balls} (Besicovitch with doubling balls) 
gives $2\zeta +1$ countable families $\cG_\delta^1, \ldots , \cG_\delta^{2\zeta +1}$ of balls in $\cF_\delta^{U}$ such that
\[
U  \subset U_0 \cup \bigcup_{j=1}^{2\zeta +1} \bigsqcup_{B \in \cG_\delta^j} B\quad \text{with}\quad \mu(U_0) = 0 \: .
\]
Then we get
\[
\sum_{j=1}^{2\zeta +1} \mu \left( \bigsqcup_{B \in \cG_\delta^j} B \right) \geq \mu (U) = +\infty \: .
\]
Consequently there exists $j_0 \in \{1, \ldots , 2\zeta +1 \}$ such that $\mu \left( \bigsqcup_{B \in \cG_\delta^{j_0}} B \right) = +\infty$. Therefore we have the same estimate as in the case $\mu (U) < +\infty$:
\begin{align*}
\hmu_\delta^q (U) \geq \sum_{B \in \cG_\delta^{j_0}} q(B) \geq & \sum_l \frac{1}{C} \mu ( B_{\a r_l} (x_l) ) \\
& \geq \frac{1}{C (\g + \epsilon_0)} \sum_l \mu (B_{r_l} (x_l)) = \frac{1}{C(\g + \epsilon_0)} \mu \left( \bigsqcup_{B \in \cG_\delta^{j_0}}  B \right) = + \infty \: ,
\end{align*}
and we conclude $\hmu^{q} (U) = +\infty$.
\end{proof}

\begin{prop} \label{cor_countable_sub_additvity}
Let $(X,d,\nu)$ satisfy Hypothesis \eqref{hypo1}. Let $\mu$ be a positive Borel measure on $X$ and let $q$ be a premeasure satisfying \eqref{eq_mean_premeasure}. Let $\hmu^{q}$ be as in Definition~\ref{dfn_packing_construction}. Then $\hmu^{q}$ is countably sub-additive.
\end{prop}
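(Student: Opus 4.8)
The plan is to first establish countable sub-additivity of $\hmu^q$ restricted to the class of open sets, and then upgrade it to arbitrary subsets of $X$ by exploiting the outer regularity that is built into the definition of $\hmu^q$ through \eqref{eq:extensionHmuInf}. For the open case, I would take a countable family $(U_k)_k$ of open sets and split the argument into two situations according to the value of $\sum_k \mu(U_k)$. If $\sum_k \mu(U_k) < +\infty$, then Proposition~\ref{prop_sub_additivity_open_case} applies directly and gives $\hmu^q\big(\bigcup_k U_k\big) \le \sum_k \hmu^q(U_k)$. If instead $\sum_k \mu(U_k) = +\infty$, I would invoke the lower bound of Proposition~\ref{prop_equivalence_on_open_sets}: each $U_k$ being Borel, one has $\hmu^q(U_k) \ge \frac{1}{\g C}\mu(U_k)$, so that $\sum_k \hmu^q(U_k) \ge \frac{1}{\g C}\sum_k \mu(U_k) = +\infty$ and the inequality holds trivially. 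This is exactly the dichotomy recorded in the remark preceding Hypothesis~\ref{hypo1}.

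The hard part is precisely the divergent case $\sum_k \mu(U_k) = +\infty$, which lies outside the scope of Proposition~\ref{prop_sub_additivity_open_case} (that proposition needs $\mu$ finite on $\bigcup_k U_k$ to run its approximation of $\bigcup_k U_k^{2\epsilon}$ by $\bigcup_k U_k$). The resolution hinges entirely on the lower estimate $\frac{1}{\g C}\mu \le \hmu^q$ furnished by Proposition~\ref{prop_equivalence_on_open_sets}, and it is there — not in the present statement — that the full strength of Hypothesis~\ref{hypo1} and of both inequalities in \eqref{eq_mean_premeasure} is spent, via the doubling-ball Besicovitch covering of Corollary~\ref{bescivoic_with_doubling_balls}. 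Without this lower bound one could not force the divergence of $\sum_k \hmu^q(U_k)$ from that of $\sum_k \mu(U_k)$.

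To pass from open sets to arbitrary sets, I would take any countable family $(A_k)_k$ of subsets of $X$. If $\sum_k \hmu^q(A_k) = +\infty$ there is nothing to prove, so I may assume $\sum_k \hmu^q(A_k) < +\infty$, whence each $\hmu^q(A_k)$ is finite. Fixing $\epsilon > 0$, the definition \eqref{eq:extensionHmuInf} of $\hmu^q$ as an infimum over open supersets lets me choose, for each $k$, an open set $U_k \supset A_k$ with $\hmu^q(U_k) \le \hmu^q(A_k) + \epsilon\, 2^{-k}$. Since $\bigcup_k A_k \subset \bigcup_k U_k$, monotonicity of $\hmu^q$ together with the open-case sub-additivity already proved yields
\[
\hmu^q\Big(\bigcup_k A_k\Big) \le \hmu^q\Big(\bigcup_k U_k\Big) \le \sum_k \hmu^q(U_k) \le \sum_k \hmu^q(A_k) + \epsilon,
\]
and letting $\epsilon \to 0$ gives the claimed countable sub-additivity.

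Beyond the divergent-case obstacle discussed above, the remaining steps are a routine outer-regularity approximation. As an alternative wrap-up one could route the conclusion through Remark~\ref{remk_sub_additivity_and_tilde_mu}: once $\hmu^q$ is shown to be countably sub-additive on open sets, that remark gives $\hmu^q = \tilde{\mu}^q$, and since $\tilde{\mu}^q$ is an outer measure the countable sub-additivity of $\hmu^q$ follows immediately.
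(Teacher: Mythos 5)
Your proof is correct and uses exactly the paper's ingredients --- the dichotomy between $\sum_k \mu(U_k) < +\infty$ (handled by Proposition~\ref{prop_sub_additivity_open_case}) and $\sum_k \mu(U_k) = +\infty$ (handled by the lower bound of Proposition~\ref{prop_equivalence_on_open_sets}), followed by the $\epsilon\,2^{-k}$ approximation by open supersets --- but you organize them in the opposite order, and this buys something real. The paper runs the dichotomy on the arbitrary sets $A_n$ themselves and, in the finite case, chooses open sets $U_n \supset A_n$ with $\hmu^q(U_n) \leq \hmu^q(A_n) + \epsilon 2^{-n}$, then invokes Proposition~\ref{prop_sub_additivity_open_case} on the $U_n$; but that proposition requires $\sum_n \mu(U_n) < +\infty$, which this choice of $U_n$ does not directly guarantee (it controls $\hmu^q(U_n)$, not $\mu(U_n)$), so one tacitly needs the reduction to $\sum_n \hmu^q(A_n) < +\infty$ together with the estimate $\mu(U_n) \leq \gamma C\, \hmu^q(U_n)$ to close that step; the paper also evaluates $\mu$ on the possibly non-Borel sets $A_n$, for which Proposition~\ref{prop_equivalence_on_open_sets} is not literally stated. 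By first establishing \emph{unconditional} countable sub-additivity on open families --- performing the dichotomy there, where $\mu$ is defined and Proposition~\ref{prop_equivalence_on_open_sets} applies without qualification --- your approximation step requires no hypothesis on the $U_k$ at all, and $\mu$ is never evaluated outside the open sets. Your alternative wrap-up via Remark~\ref{remk_sub_additivity_and_tilde_mu} is also valid, since your first step yields precisely the countable sub-additivity on open sets that the remark needs; either ending is fine.
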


\begin{proof}
Let $(A_n)_n$ be a countable collection of subsets of $X$. If $\displaystyle \sum_n \mu \left( A_n \right) = +\infty$ then by Proposition~\ref{prop_equivalence_on_open_sets} there exists $K > 0$ such that $\mu (A_n) \leq K \hmu^{q} (A_n)$ for all $n$, therefore
\[
\sum_n \hmu^{q} (A_n) \geq \frac{1}{K} \sum_n \mu (A_n)  = +\infty \: ,
\]
whence the countable sub-additivity directly follows. Recall that if $\displaystyle \sum_n \mu \left(  A_n \right) < +\infty$ and $A_n$ are open sets, then countable sub-additivity was proved in Proposition~\ref{prop_sub_additivity_open_case}. It remains to check the case $\displaystyle \sum_n \mu \left(  A_n \right) < +\infty$ for a generic sequence of Borel sets $A_n$. Fix $\epsilon >0$, then by definition of $\hmu^q$ there exists an open set $U_n$ for all $n$, depending also on $\epsilon$, such that $A_n \subset U_n$ and
\[
\hmu^q (U_n) \leq \hmu^q (A_n) + \frac{\epsilon}{2^n} \: .
\]
By sub-additivity on open sets we have 
\[
\hmu^{q} (\bigcup_n A_n) \leq \hmu^{q} (\bigcup_n U_n) \leq \sum_n \hmu^{q} (U_n) \leq \sum_n \hmu^q (A_n) + \epsilon \: .
\]
Letting $\epsilon \rightarrow 0$ concludes the proof.
\end{proof}




\subsection{Connection with a classical packing construction}
\label{sect:BLversusTT}

Our packing construction \eqref{dfn_packing_construction} is very similar to the one introduced by Taylor and Tricot in \cite{tricot} for measures in $\R^n$. In that paper, starting from a given premeasure $q$ defined on a family of sets $\cC$ (here, as in our construction, $\cC$ will be the family of closed balls) a so-called \emph{packing premeasure} is defined for any $E \subset \R^n$ as
\[
(q-P)(E) = \limsup_{\delta \to 0} \left\lbrace \sum_{B \in \cB} q(B) \: : \: \cB \text{ is a T--packing of order } \delta \text{ of } E, \: \cB \subset \{ B_r (x) \: : \: x \in E, \, r>0 \} \right\rbrace \: ,
\]
where their notion of packing (here specialized to families of closed balls,  which we will refer to as $T$--packing) is slightly different from the one we introduced in Definition~\ref{def:packing}. 
\begin{dfn}[$T$--Packings] \label{def:Tpacking}
Let $E \subset \R^n$. We say that $\cF$ is a $T$-packing of $E$ of order $\delta$ if $\cF$ is a countable family of disjoint closed balls whose radius is less or equal to $\delta$ and such that for all $B \in \cF$, $\overline{E} \cap B \neq \emptyset$.
\end{dfn}
We insist on the fact that such a packing is \emph{not} included in $E$ (as required in Definition ~\ref{def:packing}) but only in some enlargement of $E$.

Then, from this packing premeasure $(q-P)$, a packing measure $\mu^{q-P}$ is defined by applying Carath\'eodory's construction, Method I, to $q-P$ on Borel sets. To be precise, for any $A \subset \R^n$,
\[
\mu^{q-P} (A) = \inf \left\lbrace \sum_{k=1}^\infty (q-P)(A_k) \: : \: A_k \in \cB(\R^n), \: A \subset \bigcup_{k} A_k \right\rbrace \: .
\]
We will now prove that these two constructions are equivalent when the premeasure $q$ is controlled by some Radon measure $\mu$, i.e., when there exists $C >0$ such that for every closed ball $B$, $q(B) \leq C \mu(B)$.

\begin{prop}
Let $q$ be a premeasure defined on closed balls in $\R^{n}$. We assume that there exists a Radon measure $\mu$ and a constant $C>0$ such that $q(B)\le C \mu(B)$ for any closed ball $B\subset \R^{n}$. Then
$\hmu^{q} = \mu^{q-P}$.
\end{prop}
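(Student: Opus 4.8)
The plan is to establish the two inequalities $\hmu^q\le\mu^{q-P}$ and $\mu^{q-P}\le\hmu^q$ as set functions, using two elementary comparisons between packings (balls \emph{contained} in an open set, Definition~\ref{def:packing}) and $T$-packings (balls \emph{centered} in a set, Definition~\ref{def:Tpacking}). First I would record the structural facts used throughout: since $q(B)\le C\mu(B)$ with $\mu$ Radon, hence finite on bounded sets, Proposition~\ref{prop_sub_additivity_loc_finite_case} shows that $\hmu^q$ is a (metric) outer measure, Remark~\ref{remk_sub_additivity_and_tilde_mu} gives $\hmu^q=\tilde\mu^q$ (so $\hmu^q$ is the Method~I outer measure generated by $\hmu^q$ on open sets), $\mu^{q-P}$ is an outer measure by Method~I, and $\hmu^q\le C\mu$ as in Proposition~\ref{prop_equivalence_on_open_sets}. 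The two comparisons are: (a) every packing of an open set $U$ is a $T$-packing of $U$, whence $\hmu^q(U)\le(q-P)(U)$; and (b) if $\dist(\overline E,X\setminus U)=\rho>0$, then for every order $\delta<\rho$ each $T$-packing of $E$ has all its balls contained in $U$, hence is a packing of $U$, so that $(q-P)(E)\le\hmu^q(U)$.

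For $\hmu^q\le\mu^{q-P}$ it suffices, by countable sub-additivity of $\hmu^q$ and the definition of $\mu^{q-P}$ as an infimum over Borel coverings, to prove $\hmu^q(E)\le(q-P)(E)$ for every Borel set $E$: then $\hmu^q(A)\le\sum_k\hmu^q(A_k)\le\sum_k(q-P)(A_k)$ for any Borel cover $(A_k)_k$ of $A$, and taking the infimum yields the claim. Using comparison (a) together with outer regularity of $\hmu^q$, I would write $\hmu^q(E)\le\hmu^q(U)\le(q-P)(U)$ for every open $U\supset E$ and pass to the infimum. The remaining point is the outer regularity of $(q-P)$, namely $\inf\{(q-P)(U):E\subset U\text{ open}\}=(q-P)(E)$: the gap $(q-P)(U)-(q-P)(E)$ stems only from balls centered in $U\setminus E$, whose total premeasure is controlled, via $q\le C\mu$, by $C\mu$ of a thin neighborhood of $U\setminus E$; choosing $U\supset E$ with $\mu(U\setminus E)$ small (Theorem~\ref{thm_approximation_Borel_measure}) and letting the order tend to $0$ makes it vanish.

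For the reverse inequality it suffices, by monotonicity of $\mu^{q-P}$ and outer regularity of $\hmu^q$, to prove $\mu^{q-P}(U)\le\hmu^q(U)$ for every open $U$, and for that, by the definition of $\mu^{q-P}$, to exhibit a countable Borel partition $U=\bigsqcup_k E_k$ with $\sum_k(q-P)(E_k)\le\hmu^q(U)+\epsilon$. The plan is to take an increasing sequence of open sets $W_k$ with compact closure, $\overline{W_k}\subset W_{k+1}\subset U$, $\bigcup_k W_k=U$, and $\mu(\partial W_k)=0$ (the latter achievable for a.e.\ level of an exhaustion function), and set $E_k=W_k\setminus W_{k-1}$, so that each $\overline{E_k}\subset U$, the pieces $E_k$ and $E_{k+2}$ are at positive distance, and $\mu(\partial E_k)=0$. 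At a common small order $\delta$ I would choose, for each $k$, a $T$-packing $\cB_k$ of $E_k$ with $\sum_{B\in\cB_k}q(B)\ge(q-P)(E_k)-\epsilon\,2^{-k}$, and form $\cB=\bigcup_k\cB_k$; the only possible overlaps occur between balls of consecutive pieces, which lie in the $2\delta$-neighborhood $N_{2\delta}(\partial W_k)$ of some shared boundary. Removing one ball from each overlapping pair produces a genuine packing of $U$ of order $\delta$, whose premeasure sum is $\le\hmu^q_\delta(U)$, while the discarded balls contribute at most $C\mu\big(\bigcup_k N_{2\delta}(\partial W_k)\cap U\big)$, which tends to $0$ as $\delta\to0$ because $\mu(\bigcup_k\partial W_k)=0$.

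The main obstacle is precisely this combination step in the reverse inequality: reassembling the near-optimal $T$-packings of the pieces $E_k$ into a single admissible packing of $U$ while controlling the premeasure lost at the shared boundaries. The argument must be arranged so that overlaps occur only between consecutive pieces (guaranteed by the positive separation of $E_k$ and $E_{k+2}$) and so that the resulting error is absorbed by $C$ times the $\mu$-measure of shrinking neighborhoods of the $\mu$-null sets $\partial W_k$; for unbounded $U$ this forces a truncation to finitely many pieces with the order chosen accordingly, letting the number of pieces tend to infinity and $\delta\to0$ simultaneously. Everything else reduces to the elementary comparisons (a)–(b) and to outer-regularity bookkeeping driven by the domination $q\le C\mu$.
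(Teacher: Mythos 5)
Your second inequality, $\mu^{q-P}\le\hmu^q$, is workable but follows a genuinely different and heavier route than the paper's. You cut an open set $U$ into annuli $E_k=W_k\setminus W_{k-1}$ with $\mu(\partial W_k)=0$, reassemble near-optimal $T$-packings of the pieces into a single packing of $U$, and absorb the discarded balls into $C\mu$ of shrinking neighborhoods of the null sets $\partial W_k$; this can be made rigorous (note that the supremum over $T$-packings of order $\delta$ is nondecreasing in $\delta$, so it dominates $(q-P)(E_k)$ at \emph{every} order, which is what gives you a common $\delta$ for all the pieces). The paper instead observes that a $T$-packing of a \emph{compact} set $K$ of order $\delta$ is automatically a packing of any open $U\supset K$ once $2\delta<\dist(K,\R^n\setminus U)$, obtaining $(q-P)(K)\le\hmu^q(K)$ directly, and then extends to Borel sets by inner regularity of the Radon measure $\mu$; your version trades that approximation step for the gluing argument.

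The genuine gap is in the other inequality, $\hmu^q\le\mu^{q-P}$. You reduce it to the outer regularity of $(q-P)$ and propose to prove that by splitting a $T$-packing of $U$ according to the position of the \emph{centers}: balls centered in $E$ versus balls centered in $U\setminus E$, the latter bounded by $C\mu$ of a thin neighborhood of $U\setminus E$. That bound does not vanish as the order tends to $0$: the union of disjoint balls of radius $\le\delta$ centered in $U\setminus E$ is only contained in $\{x:\dist(x,U\setminus E)\le\delta\}$, and these sets decrease to $\overline{U\setminus E}$, so the error you actually control is $C\mu(\overline{U\setminus E})$, not $C\mu(U\setminus E)$. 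If $E\subset[0,1]$ is a fat Cantor set (compact, $\cL^1(E)>0$, empty interior), then $U\setminus E$ is dense in $U$ for \emph{every} open $U\supset E$, hence $\overline{U\setminus E}\supset U$ and your error term is at least $\cL^1(E)$ no matter how $U$ is chosen. Moreover, under the center restriction that your split presupposes (and that appears in the paper's displayed formula for $(q-P)$), outer regularity of $(q-P)$ is simply false under the sole hypothesis $q\le C\mu$: take $\mu=\cL^1$, $E$ the fat Cantor set, and $q(B_r(x))=2r$ for $x\notin E$ while $q(B_r(x))=0$ for $x\in E$; then $(q-P)(E)=0$, whereas a Vitali cover of $U$ by balls centered in the dense open set $U\setminus E$ yields $(q-P)(U)\ge\cL^1(U)\ge\cL^1(E)>0$ for every open $U\supset E$. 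So the step cannot be repaired within your scheme: the split must be made according to whether a ball \emph{meets} $\overline E$, not where its center lies. That is exactly the paper's argument: it proves $\hmu^q(E)\le(q-P)(E)$ directly by taking a packing of the neighborhood $E_\e=\{\dist(\cdot,\overline E)<\e\}$, putting the balls that meet $\overline E$ into a $T$-packing of $E$ (Definition \ref{def:Tpacking} requires only intersection with $\overline E$), and bounding the remaining balls by $C\mu(E_\e\setminus\overline E)$, which tends to $0$ as $\e\to0$ by continuity from above since $\bigcap_{\e>0}(E_\e\setminus\overline E)=\emptyset$; your final Method-I bookkeeping (countable sub-additivity of $\hmu^q$ plus the infimum over Borel covers) then coincides with the paper's.
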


\begin{proof}
Let us prove that for any compact set $K \subset \R^n$
\begin{equation} \label{eq:comparisonTricotCompact0}
\mu^{q-P}(K) \leq \hmu^q (K) \: .
\end{equation}
Fix an open set $U$ containing $K$. As $K$ is compact, there exists $\delta_0 > 0$ such that, for all $0 < \delta < \delta_0$,
\[
K_{2 \delta} = \left\lbrace x \in \R^n \: : \: \dist (x,K) < 2\delta  \right\rbrace \subset U \: .
\]
Note that if $\cB$ is a $T$--packing of order $\delta$ of $K$ then it is a packing of order $\delta$ of $K_{2\delta}$ and thus of $U$. Hence, for all $\delta < \delta_0$
\[
\sup \left\lbrace \sum_{B \in \cB} q(B) \: : \: \cB \ T\text{--packing of } K \text{ of order } \delta  \right\rbrace \leq \sup \left\lbrace \sum_{B \in \cB} q(B) \: : \: \cB \text{ packing of } U \text{ of order } \delta  \right\rbrace \: .
\]
Let us pass to the limit as $\delta \rightarrow 0$ and obtain by definition
\begin{equation} \label{eq:comparisonTricotCompact}
(q-P)(K) \leq \hmu^q(U) \: .
\end{equation}
The inequality \eqref{eq:comparisonTricotCompact} is true for any open set $U \supset K$, therefore
\[
(q-P)(K) \leq \inf_{K \subset U open} \hmu^q (U) = \hmu^q (K)\,.
\]
Since $\{K\}$ is trivially a covering of $K$, then $\mu^{q-P}(K) \leq (q-P)(K)$, which leads to \eqref{eq:comparisonTricotCompact0}. It is easy to extend \eqref{eq:comparisonTricotCompact0} to any bounded Borel set thanks to Theorem~\ref{thm_approximation_Borel_measure} and the fact that $\mu$ is Radon and $\R^n$ is locally compact. Then for any Borel set $E \subset \R^n$,
\[
\mu^{q-P} (E) \leq \hmu^q (E) \: .
\]

Let us now prove that for any bounded set $E \subset \R^n$ we have the converse inequality
\begin{equation} \label{eq:comparisonTricotBounded0}
\hmu^q (E) \leq \mu^{q-P}(E) \: .
\end{equation}
Given $\e>0$ and $0 < \delta < \epsilon/2$ we define
\[
E_\epsilon = \left\lbrace x \in \R^n \: : \: \dist (x,\overline{E}) < \e \right\rbrace \: ,
\]
which is obviously an open set. 
Let $\cB$ be a $\delta$--packing of $E_\e$, then $\cB = \cB_1 \sqcup \cB_2$ with
\[
\cB_1 = \left\lbrace B \in \cB \: : \: B \cap \overline{E} \neq \emptyset \right\rbrace \quad \text{and} \quad \cB_2 = \left\lbrace B \in \cB \: : \: B \cap \overline{E} = \emptyset \right\rbrace \: .
\]
Then, by Definition~\ref{def:Tpacking}, $\cB_1$ is a $T$--packing of order $\delta$ of $E$ while any ball of $\cB_2$ is included in $E_\e - \overline{E}$. Therefore, taking the supremum over all $\delta$--packings of $E_\e$, we get
\[
\hmu^q_\delta (E_\e) \leq \sup \left\lbrace \sum_{\substack{B \in \cB \\ \overline{B} \cap \overline{E} \neq \emptyset}} q(B) \: : \: \cB \, \delta\text{--packing of } E_\e \right\rbrace + \underbrace{ \sup \left\lbrace \sum_{\substack{B \in \cB \\ \overline{B} \cap \overline{E} = \emptyset}} q(B) \: : \: \cB \, \delta\text{--packing of } E_\e \right\rbrace }_{\leq \mu \left( E_\e - \overline{E} \right)}\,,
\]
then letting $\delta \rightarrow 0$ leads to
\begin{equation} \label{eq:comparisonTricotBounded1}
\hmu_\delta^q (E_\e) \leq (q-P)(E) + \mu (E_\e - \overline{E}) \: .
\end{equation}
As $\mu$ is Radon and $E_\e - \overline{E}$ is bounded (since $E$ is bounded), then $\mu (E_\e - \overline{E}) < +\infty$, $( E_\e - \overline{E} )_\epsilon$ is increasing in $\e$ and $\cap_{\e>0}( E_\e - \overline{E} )_\epsilon = \emptyset$, so that
\[
\mu (E_\e - \overline{E}) \xrightarrow[\e \to 0]{} 0 \: .
\]
Letting $\epsilon \rightarrow 0$ in \eqref{eq:comparisonTricotBounded1}, we get
\begin{equation} \label{eq:comparisonTricotBounded2}
\hmu^q (E) = \inf_{E \subset U} \left\lbrace \hmu^q (U) \: : \: U \text{ open } \right\rbrace \leq \lim_{\e \to 0} \hmu^q (E_\e) \leq (q-P)(E) \: .
\end{equation}
Thanks to Proposition~\ref{prop_sub_additivity_loc_finite_case}, we know that $\hmu^q$ is countably sub-additive and for every countable family $(E_h)_h$ such that $E \subset \cup_h E_h$, we have by \eqref{eq:comparisonTricotBounded2}
\[
\hmu^q (E) \leq \sum_h \hmu^q (E \cap E_h) \leq \sum_h (q-P)(E \cap E_h) \leq \sum_h (q - P)(E_h)\,.
\]
Finally taking the infimum over all such $(E_h)_h$ gives \eqref{eq:comparisonTricotBounded0}, whence
\[
\hmu^q(E) \leq \mu^{q-P} (E) 
\]
holds for any Borel set $E \subset \R^n$. This concludes the proof.
\end{proof}

\begin{remk}
Reading carefully the proof, one should note that the property of $\R^n$ which is used is the local compactness of $\R^n$. Therefore, if we extend the definition of Taylor and Tricot \cite{tricot} to a metric space $(X,d)$, then assuming $(X,d)$ locally compact the two packing constructions still coincide under the assumption $q(B) \leq C \mu (B)$ for all closed balls $B$, where $\mu$ is some given Radon measure and $C >0$.
\end{remk}

\subsection{The case of a signed measure}
Our aim is to prove that the packing-type reconstruction applied to a signed measure $\mu$, with premeasures $q_{\pm}$ satisfying 
\begin{equation}\label{cond-signed-pplus}
\frac{1}{C} \mu^+(B_{\a r}(x)) - \mu^{-}(B_{r}(x)) \le q_{+}(B_{r}(x)) \le C\mu^{+}(B_{r}(x))
\end{equation}
and
\begin{equation}\label{cond-signed-pminus}
\frac{1}{C} \mu^-(B_{\a r}(x)) - \mu^{+}(B_{r}(x)) \le q_{-}(B_{r}(x)) \le C\mu^{-}(B_{r}(x))
\end{equation}
for some $C\ge 1$ and $0<\a\le 1$, produces a signed measure $\hmu^{p}$ whose positive and negative parts are comparable with those of $\mu$. 

We notice that properties \eqref{cond-signed-pplus} and \eqref{cond-signed-pminus} are weaker than the following (and apparently more natural) ones: 
\begin{align}
\label{csp1}
\frac{1}{C} \mu^+(B_{\a r}(x)) &\le q_{+}(B_{r}(x)) \le C\mu^{+}(B_{r}(x)) \\ 
\label{csp2}
\frac{1}{C} \mu^-(B_{\a r}(x)) & \le q_{-}(B_{r}(x)) \le C\mu^{-}(B_{r}(x))\,.
\end{align}
On the other hand we note that the premeasures defined as
\[ 
q_{\pm}(B_{r}(x)) =  \left( \frac{1}{r} \int_{s=0}^r  \mu (B_s (x) )  \, ds \right)_\pm 
\]
satisfy \eqref{cond-signed-pplus}--\eqref{cond-signed-pminus} but not \eqref{csp1}--\eqref{csp2}. 


\begin{theo}\label{thm:signedboh}
Let $(X,d,\nu)$ satisfy Hypothesis \ref{hypo1} for some constants $\a\in (0,1]$ and $\g\ge 1$, and let $\mu = \mu^+ - \mu^-$ be a locally finite, Borel-regular signed measure on $X$. Let $q_\pm$ be a pair of premeasures satisfying \eqref{cond-signed-pplus} and \eqref{cond-signed-pminus} for some $C\ge 1$. Take $\hmu^{q_\pm}$ as in Definition~\ref{dfn_packing_construction}. Then the following properties hold:
\begin{itemize}
\item[(i)] $\hmu^{q_+}$, $\hmu^{q_-}$ are metric outer measures finite on bounded sets;
\item[(ii)] $\hmu^q = \hmu^{q_+} - \hmu^{q_{-}}$ is a signed measure such that, for any Borel set $A \subset X$,
\[
\frac{1}{\g C} \mu^+ (A) \leq  \hmu^{q_+} (A) \leq  C\mu^+ (A)\quad \text{ and }\quad \frac{1}{\g C} \mu^- (A) \leq  \hmu^{q_-} (A) \leq  C\mu^- (A) \: ,
\]
whence in particular
\[
\frac{1}{\g C} | \mu | (A) \leq  |\hmu^q | (A)  \leq  C | \mu | (A) \: .
\]
\end{itemize}
\end{theo}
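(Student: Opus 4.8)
The plan is to analyze $\hmu^{q_+}$ and $\hmu^{q_-}$ separately and symmetrically, so I focus on $\hmu^{q_+}$. The upper bound is immediate: by the right-hand side of \eqref{cond-signed-pplus} we have $q_+(B)\le C\mu^+(B)$, so for any open set $U$ and any packing $\cF$ of $U$ one has $\sum_{B\in\cF}q_+(B)\le C\mu^+\big(\bigsqcup_{B\in\cF}B\big)\le C\mu^+(U)$, whence $\hmu^{q_+}(U)\le C\mu^+(U)$. Outer regularity of $\mu^+$ (which holds since $\mu$ is locally finite and Borel-regular) then upgrades this to $\hmu^{q_+}(A)\le C\mu^+(A)$ for every Borel set $A$, and in particular gives finiteness on bounded sets.

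\textbf{The lower bound (main obstacle).} The crux is to prove $\hmu^{q_+}(A)\ge\frac{1}{\gamma C}\mu^+(A)$; the genuinely new difficulty compared with Proposition~\ref{prop_equivalence_on_open_sets} is the extra term $-\mu^-(B_r(x))$ in the left-hand side of \eqref{cond-signed-pplus}, and the key idea is that this term is asymptotically harmless because $\mu^+\perp\mu^-$. First I would establish, by a Besicovitch covering argument analogous to the proof of Proposition~\ref{prop_diffusion_in_X}(ii) (covering the $\mu^+$-part where the upper derivative exceeds $\eta$ by balls on which $\mu^-$ dominates, and making the enclosing $\mu^-$-mass small by outer regularity on the $\mu^-$-null set carrying $\mu^+$), that
\[
\limsup_{r\to 0^+}\frac{\mu^-(B_r(x))}{\mu^+(B_r(x))}=0\qquad\text{for }\mu^+\text{-almost every }x .
\]
Fixing $\epsilon_0>0$ and a small $\eta>0$, I would combine this with the doubling sequence of Proposition~\ref{prop_diffusion_in_X}: for $\mu^+$-a.e.\ $x$ there is an infinitesimal sequence of radii $r_n$ with $\mu^+(B_{r_n}(x))\le(\gamma+\epsilon_0)\mu^+(B_{\a r_n}(x))$, and for $n$ large these also satisfy $\mu^-(B_{r_n}(x))\le\eta\,\mu^+(B_{r_n}(x))$. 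Thus the refined family of \emph{good} balls $B=B_r(x)\subset U$ that are simultaneously $\mu^+$-doubling and satisfy $\mu^-(B)\le\eta\mu^+(B)$ is, for $\mu^+$-a.e.\ $x$, a fine cover, so Theorems~\ref{thm_generalized_besicovic_federer_covering_lemma} and~\ref{thm_generalized_besicovic_federer} apply to it exactly as in Corollary~\ref{bescivoic_with_doubling_balls}. For such a ball, \eqref{cond-signed-pplus} gives $q_+(B)\ge\frac1C\mu^+(B_{\a r}(x))-\mu^-(B_r(x))\ge\big(\frac1C-\eta(\gamma+\epsilon_0)\big)\mu^+(B_{\a r}(x))\ge\frac{1/C-\eta(\gamma+\epsilon_0)}{\gamma+\epsilon_0}\,\mu^+(B)$. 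Running the packing estimate of Proposition~\ref{prop_equivalence_on_open_sets} over a packing of good balls covering $\mu^+$-a.e.\ of $U$ (splitting into the cases $\mu^+(U)<+\infty$ and $\mu^+(U)=+\infty$ verbatim as there) and letting successively $\delta\to0$, $\eta\to0$, $\epsilon_0\to0$ yields $\hmu^{q_+}(U)\ge\frac{1}{\gamma C}\mu^+(U)$ for open $U$; the extension to Borel $A$ follows from monotonicity together with the infimum defining $\hmu^{q_+}$ in \eqref{eq:extensionHmuInf}. Designing this refined good-ball family and checking it is still a fine cover is the step I expect to be hardest.

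\textbf{Metric outer measure.} Next I would show $\hmu^{q_+}$ is a metric outer measure. Countable sub-additivity follows verbatim from the proof of Proposition~\ref{cor_countable_sub_additvity}, since that argument uses only the upper bound $q_+\le C\mu^+$ (through Proposition~\ref{prop_sub_additivity_open_case}) and the lower bound $\mu^+\le\gamma C\,\hmu^{q_+}$ just obtained. For the metric property, I would observe that if $V,W$ are open with $d(V,W)>0$ then for $\delta<d(V,W)/2$ no ball of a packing of $V\cup W$ of order $\delta$ can meet both $V$ and $W$ (its diameter being $<d(V,W)$), so splitting packings accordingly gives $\hmu^{q_+}_\delta(V\cup W)=\hmu^{q_+}_\delta(V)+\hmu^{q_+}_\delta(W)$ and hence additivity of $\hmu^{q_+}$ on separated open sets. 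Given $E,F$ with $d(E,F)>0$, enclosing them in separated open neighborhoods $V\supset E$, $W\supset F$ and intersecting an arbitrary open $U\supset E\cup F$ with $V$ and $W$ yields super-additivity via \eqref{eq:extensionHmuInf}; combined with sub-additivity this gives the metric property. By Carathéodory's criterion $\hmu^{q_+}$ (and symmetrically $\hmu^{q_-}$, using \eqref{cond-signed-pminus}) is then a Borel measure, proving (i).

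\textbf{The signed measure and its variation.} For (ii), since $\mu$ is a signed measure $\mu^+$ and $\mu^-$ cannot both have infinite total mass, so by the upper bounds $\hmu^q=\hmu^{q_+}-\hmu^{q_-}$ is a well-defined signed measure, and the two displayed two-sided estimates are precisely the bounds established above. For the total variation, let $P$ be a Hahn set for $\mu$, so that $\mu^+(X\setminus P)=0$ and $\mu^-(P)=0$. The upper bounds force $\hmu^{q_+}(A\setminus P)\le C\mu^+(A\setminus P)=0$ and $\hmu^{q_-}(A\cap P)\le C\mu^-(A\cap P)=0$, so $\hmu^{q_+}$ is carried by $P$ and $\hmu^{q_-}$ by $X\setminus P$; being nonnegative and mutually singular, they form the Jordan decomposition of $\hmu^q$, whence $|\hmu^q|=\hmu^{q_+}+\hmu^{q_-}$. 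Adding the bounds for $\hmu^{q_+}$ and $\hmu^{q_-}$ then gives $\frac{1}{\gamma C}|\mu|\le|\hmu^q|\le C|\mu|$, as claimed.
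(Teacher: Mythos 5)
Your proof is correct, but the lower bound $\hmu^{q_+}\ge\frac{1}{\g C}\mu^+$ follows a genuinely different route from the paper's. The paper never tries to make the error term $\mu^-(B_r(x))$ small ball-by-ball: it keeps it as a bulk term. Packing an open $U\supset A$ with $\mu^+$-doubling balls (Corollary~\ref{bescivoic_with_doubling_balls} applied to $\mu^+$) and summing the left inequality of \eqref{cond-signed-pplus} gives $\hmu^{q_+}(U)\ge\frac{1}{\g C}\mu^+(A)-\mu^-(U)$; then, choosing open sets $U_k\supset A$ along which \emph{simultaneously} $\hmu^{q_+}(U_k)\to\hmu^{q_+}(A)$ and $\mu^-(U_k)\to\mu^-(A)$ (by \eqref{eq:extensionHmuInf} and outer regularity of $\mu^-$), it obtains $\hmu^{q_+}(A)\ge\frac{1}{\g C}\mu^+(A)-\mu^-(A)$, and only at the very end kills the error by applying this inequality to $A\cap P$, with $P$ a Hahn set, using $\mu^-(A\cap P)=0$ and $\mu^+(A\cap P)=\mu^+(A)$. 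You instead push the mutual singularity down to the infinitesimal level: the differentiation lemma $\mu^-(B_r(x))/\mu^+(B_r(x))\to 0$ for $\mu^+$-a.e.\ $x$ (which is indeed provable by exactly the Besicovitch-plus-outer-regularity technique of Proposition~\ref{prop_diffusion_in_X}(ii), covering the bad set by balls where $\mu^-$ dominates inside an open set of small $\mu^-$-measure), and then a refined good-ball family on which $q_+(B)\ge\frac{1/C-\eta(\g+\e_0)}{\g+\e_0}\,\mu^+(B)$, rerunning Proposition~\ref{prop_equivalence_on_open_sets} with the extra parameter $\eta\to 0$. The step you flag as delicate does work: the doubling condition of Proposition~\ref{prop_diffusion_in_X} holds along an infinitesimal sequence of radii while the density condition holds for \emph{all} sufficiently small radii, so the two can be intersected and the refined family is still a fine cover $\mu^+$-a.e. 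The trade-off: your route needs one extra lemma and one extra limiting parameter, but yields the cleaner intermediate statement $\hmu^{q_+}(U)\ge\frac{1}{\g C}\mu^+(U)$ for every open $U$, which transfers to arbitrary sets immediately via \eqref{eq:extensionHmuInf}, whereas the paper needs the simultaneous approximation by $U_k$ and the final Hahn-set manipulation. Two further remarks: your verifications of the metric property (splitting packings of separated open sets) and of $|\hmu^q|=\hmu^{q_+}+\hmu^{q_-}$ (mutual singularity of the reconstructed measures, carried by $P$ and $X\setminus P$) are more detailed than the paper's, which asserts both with essentially no justification; and for countable sub-additivity you could cite Proposition~\ref{prop_sub_additivity_loc_finite_case} directly, as the paper does, since $\mu^\pm$ are finite on bounded sets, rather than routing through the lower bound as in Proposition~\ref{cor_countable_sub_additvity}.
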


\begin{proof}
The countable sub-additivity of $\hmu^{q_{\pm}}$ follows from the second inequalities in \eqref{cond-signed-pplus}--\eqref{cond-signed-pminus} (see Proposition~\ref{prop_sub_additivity_loc_finite_case}). 
Then for any open set $U \subset X$ both inequalities
\[
\hmu^{q_\pm} (U) \leq C\mu^\pm (U) 
\]
are just a consequence of the definition of $\hmu^{q_{\pm}}$ and of the second inequalities in \eqref{cond-signed-pplus}--\eqref{cond-signed-pminus}.
This proves (i). 

Let now $A \subset X$ be a Borel set. We first derive an estimate concerning $\hmu^{q_{+}}$ (the estimate for $\hmu^{q_{-}}$ can be obtained in the same way). If $\mu^+ (A) < +\infty$, we take an open set $U$ containing $A$ such that $\mu^{+}(U)<+\infty$. Let $\e_{0},\delta > 0$ be fixed, then apply Corollary~\ref{bescivoic_with_doubling_balls} to $\mu^{+}$ and get a countable family $\cG_\delta = \{B_{r_{j}}(x_{j})\}_{j}$ of disjoint closed balls contained in $U$ with radii $r_{j}\leq \delta$ and satisfying $\mu^+ (B_{\a r_{j}}(x_{j})) \geq \frac{1}{\g +\e_{0}} \mu^+ (B_{r_{j}}(x_{j}))$ for all $j$, such that
\[
\mu^+ (A) = \mu^+ \left(\bigsqcup_{B \in \cG_\delta } B \right) \: . 
\]
We have
\begin{align*}
\hmu^{q_+}_\delta (U) & \geq \sum_{B \in \cG_\delta} q_+ (B) \geq \sum_{j} \frac 1C \mu^{+}(B_{\a r_{j}}(x_{j})) - \mu^{-}(B_{r_{j}}(x_{j}))\\ 
& \geq \sum_{j} \frac 1{(\g + \e_{0})C} \mu^{+}(B_{r_{j}}(x_{j})) - \mu^{-}(B_{r_{j}}(x_{j})) \geq \frac 1{(\g + \e_{0})C} \mu^{+}(A) - \mu^{-}(U)\,.
%
%
\end{align*}
Letting $\delta\to 0$ and then $\e_{0}\to 0$ we find
\begin{equation} \label{eq_signed_case_1}
\hmu^{q_+} (U)  \geq \frac 1{\g C} \mu^+ (A) - \mu^- (U) \: .
\end{equation}
By definition of $\hmu^{q_+}(A)$, there exists a sequence of open sets $(U_k^1)_k$ such that, for all $k$, it holds $A \subset U_k^1$ and
\[
\hmu^{q_+} (U_k^1) \xrightarrow[k \to \infty]{} \hmu^{q_+} (A) \: .
\]
By outer regularity of $\mu^-$ (which is Borel and finite on bounded sets) there exists a sequence of open sets $(U_k^2)_k$ such that, for all $k$, we get $A \subset U_k^2$ and
\[
\mu^- (U_k^2) \xrightarrow[k \to \infty]{} \mu^- (A) \: .
\]
For all $k$, let $U_k = U_k^1 \cap U_k^2$, then $U_k$ is an open set, $A\subset U_k$ and, by monotonicity,
\begin{align*}
\hmu^{q_+} (A) & \leq \hmu^{q_+} (U_k) \leq \hmu^{q_+} (U_k^1)\,, \\
\mu^- (A) & \leq \mu^- (U_k) \leq \mu^- (U_k^2) \: ,
\end{align*}
therefore
\[
\hmu^{q_+} (U_k) \xrightarrow[k \to \infty]{} \hmu^{q_+} (A) \text{ and } \mu^- (U_k) \xrightarrow[k \to \infty]{} \mu^-(A) \: .
\]
Evaluating \eqref{eq_signed_case_1} on the sequence $(U_k)_k$ and letting $k$ go to $+\infty$, we eventually get
\begin{equation} \label{eq_signed_case_2}
\hmu^{q_+} (A)  \geq \frac{1}{\g C} \mu^+ (A) - \mu^- (A) \: .
\end{equation}
Owing to Hahn decomposition of signed measures, there exists a Borel set $P$ such that for all Borel $A$ it holds
\[
\mu^+ (A) = \mu^+ (A \cap P) =  \mu (A \cap P) \text{ and } \mu^- (A) = \mu (A - P)\: .
\]
Finally, let $A$ be a Borel set, then by \eqref{eq_signed_case_2} applied to $A \cap P$ we get
\begin{align*}
\hmu^{q_+} (A) \geq \hmu^{q_+} (A \cap P) & \geq \frac 1{\g C} \mu^+ (A \cap P) - \mu^- (A \cap P) \\
& = \frac 1{\g C} \mu^+ (A)\,.
\end{align*}
It remains to show that if $\mu^+ (A) = +\infty$, then $\hmu^{q_+} (A) = +\infty$. This can be easily obtained by taking a sequence of open balls $U_{n}$ with fixed center and radius $n\in \N$, and by considering the sequence $A_{n} = A\cap U_{n}$ for which $\mu^{+}(A_{n})<+\infty$ and $\lim_{n} \mu^{+}(A_{n}) = \mu^{+}(A) = +\infty$. By applying the same argument as before, we get 
\[
\hmu^{q_{+}}(A)\ge  \hmu^{q_{+}}(A_{n}) \ge \frac 1{\g C}\mu^{+}(A_{n})\,,
\]
thus the conclusion follows by taking the limit as $n\to +\infty$.
This completes the proof of (ii) and thus of the theorem.
\end{proof}

\subsection{A stability result}
If the approximate values $q(B_{r}(x))$ are closer and closer to the actual values of $\mu(B_{r}(x))$ when $r\to 0$ one obtains by Theorem \ref{thm_main} that the reconstructed measure $\hmu^{q}$ coincides with $\mu$. More precisely, we have the following stability result.
\begin{corollary}
Let us fix $(\a_{n})_{n}, (\g_{n})_{n}, (C_{n})_{n}$ and $(r_{n})_{n}$ such that 
\begin{align*}
&\a_{n}\in (0,1],\ \g_{n}\ge 1,\ C_{n}\ge 1,\ r_{n}>0 \\ 
&\a_{n},\g_{n},C_{n}\to 1\quad\text{and}\quad r_{n}\to 0\quad \text{as }n\to \infty\,.
\end{align*}
Let $(X,d)$ be a directionally limited metric space endowed with a sequence of asymptotically $(\a_{n},\g_{n})$-bounded measures $\nu_{n}$ satisfying \eqref{agbounded}. Let $\mu$ be a positive Borel measure on $X$ and let $q$ be a premeasure defined on closed balls and satisfying 
\[
C_{n}^{-1}\mu(B_{\a_{n}r}(x)) \le q(B_{r}(x)) \le C_{n}\, \mu(B_{r}(x))
\] 
for all $x\in X$, $n\in \N$, and $r\in (0,r_{n})$. Then $\hmu^{q} = \mu$.
\end{corollary}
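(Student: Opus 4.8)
The plan is to exploit the fact that the packing measure $\hmu^{q}$ of Definition~\ref{dfn_packing_construction} depends only on the premeasure $q$, and not at all on the constants $\a_{n},\g_{n},C_{n},r_{n}$ nor on the auxiliary measures $\nu_{n}$. Hence $\hmu^{q}$ is a single fixed object, whereas Theorem~\ref{thm_main} (more precisely Proposition~\ref{prop_equivalence_on_open_sets}) can be invoked once for each index $n$, producing a whole family of two-sided bounds whose constants tend to $1$. Passing to the limit in $n$ will then pin down $\hmu^{q}$.

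First I would fix $n\in\N$ and verify that the hypotheses of Proposition~\ref{prop_equivalence_on_open_sets} hold with the data attached to that index. Indeed $(X,d)$ is directionally limited and $\nu_{n}$ is asymptotically $(\a_{n},\g_{n})$-bounded, so $(X,d,\nu_{n})$ satisfies Hypothesis~\ref{hypo1} with constants $(\a_{n},\g_{n})$; moreover the assumed inequality
\[
C_{n}^{-1}\mu(B_{\a_{n}r}(x))\le q(B_{r}(x))\le C_{n}\,\mu(B_{r}(x)),\qquad r\in(0,r_{n}),\ x\in X,
\]
is exactly \eqref{eq_mean_premeasure} with $\a=\a_{n}$, $C=C_{n}$ and $r_{0}=r_{n}$. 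Since the packing construction only ever uses balls of radius $\le\delta$ with $\delta\to 0$, the restriction $r<r_{n}$ causes no loss, and Proposition~\ref{prop_equivalence_on_open_sets} gives, for every Borel set $A\subset X$,
\[
\frac{1}{\g_{n}C_{n}}\,\mu(A)\le \hmu^{q}(A)\le C_{n}\,\inf\{\mu(U)\,:\,A\subset U\ \text{open}\}.
\]

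Next I would let $n\to\infty$. Because $\hmu^{q}(A)$ is independent of $n$ while $\g_{n}C_{n}\to 1$ and $C_{n}\to 1$, the sandwich collapses to
\[
\mu(A)\le \hmu^{q}(A)\le \inf\{\mu(U)\,:\,A\subset U\ \text{open}\}.
\]
Specializing to an open set $U$, where the infimum over open supersets equals $\mu(U)$ itself, already yields $\hmu^{q}(U)=\mu(U)$ for every open $U$, with no extra assumption on $\mu$.

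Finally I would upgrade this to arbitrary Borel sets through the defining formula \eqref{eq:extensionHmuInf} of $\hmu^{q}$: for any $A$,
\[
\hmu^{q}(A)=\inf\{\hmu^{q}(U):A\subset U\ \text{open}\}=\inf\{\mu(U):A\subset U\ \text{open}\}.
\]
Thus $\hmu^{q}$ is precisely the outer regularization of $\mu$, and the identity $\hmu^{q}=\mu$ follows as soon as $\mu$ is outer regular, which holds in particular whenever $\mu$ is finite on bounded sets by Theorem~\ref{thm_approximation_Borel_measure}. This last passage is the only delicate point: the upper bound in Theorem~\ref{thm_main} is phrased through $\inf\{\mu(U):A\subset U\}$ rather than $\mu(A)$, so moving from open sets to general Borel sets genuinely relies on outer regularity of $\mu$, while the rest is a routine limit in $n$.
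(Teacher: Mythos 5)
Your proposal is correct and is essentially the paper's own argument: the paper's proof of this corollary consists of the single line ``It is an immediate consequence of Theorem~\ref{thm_main},'' and what you wrote is exactly the expansion of that line (apply Proposition~\ref{prop_equivalence_on_open_sets} for each $n$, use that $\hmu^{q}$ does not depend on $n$, and let $\g_{n}C_{n}\to 1$). The one point where you go beyond the paper is also well taken: the limit only yields $\hmu^{q}(A)=\inf\{\mu(U):A\subset U \text{ open}\}$, so the identity $\hmu^{q}=\mu$ genuinely requires outer regularity of $\mu$, exactly as in part~3 of Theorem~\ref{thm_main}; since the corollary's hypotheses do not literally guarantee this (a positive Borel measure that is not finite on bounded sets, such as the counting measure on $\Q\cap[0,1]$ in $\R$, fails to be outer regular, and then $\hmu^{q}$ is only the outer regularization of $\mu$), your flagging of this step identifies a hypothesis the paper leaves implicit rather than a gap in your own argument.
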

\begin{proof}
It is an immediate consequence of Theorem \ref{thm_main}.
\end{proof}

\begin{remk}
The above corollary can be formulated for signed measures as well. Indeed under the same assumptions on $X$ and analogous assumptions on $q_{\pm}$ one obtains the same conclusion, i.e. the coincidence of the reconstructed signed measure with the initial measure, thanks to Theorem \ref{thm:signedboh}.
\end{remk}

\section*{Acknowledgements}
\addcontentsline{toc}{section}{Acknowledgements}
This work has been co-funded by GNAMPA (INdAM) and by PALSE program. We also thank Carlo Benassi for fruitful discussions on the subject.

\end{document}